\numberwithin{equation}{section}
\newtheorem{theorem}{Theorem}[section]
\newtheorem{proposition}[theorem]{Proposition}
\newtheorem{lemma}[theorem]{Lemma}
\newtheorem{corollary}[theorem]{Corollary}
\theoremstyle{definition}
\newtheorem{definition}[theorem]{Definition}
\newtheorem{remark}[theorem]{Remark}
\def\N {\mathbb N}
\def\R {\mathbb R}
\def\S {\mathcal S}
\def\Rd {{\mathbb R}^N}
\def\Sd {{\mathcal S'}({\mathbb R}^N)}
\def\Rz{\Rd \backslash \{0\}}
\def\Dd {{\mathcal D'}(\Omega_m)}
\def\D {{\mathcal D}(\Omega_m)}
\def\X {{\mathcal X}_{m,\gamma}}
\def\B {{\mathcal B}_{m,\gamma,M}}
\def\Bs {{\mathcal B}_{m,\gamma,M}^\star}
\def\Bg {{\mathcal B}_{M}^{\gamma+m}}
\def\Bt {\widetilde{\B}}
\def\Bts {\widetilde{\Bs}}
\def\Wd {{\mathcal W}^{\gamma+m}}
\def\@setcopyright{}
\def\serieslogo@{}
\begin{document}

\author[H. Mouajria]{Hattab MOUAJRIA}
\address{Universit\'e de Carthage, Institut Pr\'eparatoire aux \'etudes d'ing\'enieurs de Nabeul,
Campus Universitaire, Merazka, 8000 Nabeul, Tunisie}
\email{mouajria.hattab@gmail.com}
\author[S. Tayachi]{Slim Tayachi}
\address{Universit\'e de Tunis El Manar, Facult\'e des Sciences de
Tunis, D\'epartement de Math\'ematiques, Laboratoire \'Equations aux
d\'eriv\'ees partielles LR03ES04, 2092 Tunis, Tunisie}
\email{slim.tayachi@fst.rnu.tn}
\author[F. B. Weissler]{Fred B. Weissler}
\address{Universit\'e Paris 13, CNRS UMR 7539 LAGA, 99,
Avenue Jean-Baptiste Cl\'ement, 93430 Villetaneuse,
France}\email{weissler@math.univ-paris13.fr}

\title[The nonlinear heat equation with absorption]{Large time behavior of  solutions to the nonlinear heat equation with absorption with highly singular antisymmetric initial values}
\date{\today}

\begin{abstract}
 In this paper we study global well-posedness and long time asymptotic behavior of solutions to the nonlinear heat equation with absorption,
$ u_t - \Delta u +  |u|^\alpha u =0$,
where $u=u(t,x)\in \R, $ $(t,x)\in (0,\infty)\times\Rd$ and  $\alpha>0$.
We focus particularly on highly singular initial values which are antisymmetric with respect to the variables
$x_1,\; x_2,\; \cdots,\; x_m$ for some $m\in \{1,2, \cdots, N\}$, such as
$u_0 = (-1)^m\partial_1\partial_2 \cdots \partial_m|\cdot|^{-\gamma} \in \Sd$, $0 < \gamma < N$.
In fact, we show global well-posedness  for initial data bounded in an appropriate sense by  $u_0$, for any $\alpha>0$.

Our approach is to study  well-posedness and  large time behavior on sectorial domains
of the form $\Omega_m = \{x \in \R^N : x_1, \cdots, x_m > 0\}$, and then to extend the results by reflection to
solutions on $\R^N$ which are antisymmetric.
We show that the large time behavior depends on the relationship between  $\alpha$ and $2/(\gamma+m)$, and we consider
all three cases, $\alpha$ equal to, greater than, and less than $2/(\gamma+m)$.  Our results include, among others,
new examples of self-similar and asymptotically self-similar solutions.

\end{abstract}

\subjclass[2000]{Primary 35K05, 35B40, 35B06, 35B30; Secondary 35B60, 47A20}
\keywords{Heat equation, absorption, complexity, sectorial domains, asymptotic behavior,
                     antisymmetric solutions, scaling}
\maketitle

\section{Introduction}
\label{intro}

 In this paper we study the long time behavior of solutions to the nonlinear heat equation with absorption,
\begin{equation}
\label{nheq}%
u_t - \Delta u +  |u|^\alpha u =0 ,
\end{equation}
where $u=u(t,x)\in \R, $ $(t,x)\in (0,\infty)\times\Rd$ and  $\alpha>0$,
which are antisymmetric with respect to the variables
$x_1,\; x_2,\; \cdots,\; x_m$ for some $m\in \{1,2, \cdots, N\}$.  Our goal is to see how some
well-known results \cite{CDEW,CDW,CDWMult,GVL} for the long time behavior of solutions to \eqref{nheq}
carry over with the additional hypothesis of antisymmetry.  For example, some of the results in
the cited works concern positive solutions.  We will see that these results have analogues for
antisymmetric solutions which are positive on an appropriate sector in $\Rd$. In particular, these solutions
are not positive on $\Rd$. Moreover, in many cases
the range of allowable powers $\alpha>0$ will be larger with the additional hypothesis of antisymmetry
than without.  Also, the condition of antisymmetry allows consideration of a class of highly singular initial values.

Our previous paper \cite{MTW} considered the {\it linear} heat equation on $\Rd$ with antisymmetric solutions.
The results and the theoretical framework from \cite{MTW} were applied to the
 nonlinear heat equation with source term
\begin{equation}
\label{nheqs}%
u_t - \Delta u -  |u|^\alpha u =0 ,
\end{equation}
in \cite{TW2}.  In the current paper, these ideas are applied to \eqref{nheq}.
We mention that this approach was earlier developed in \cite{TW}
where solutions to \eqref{nheqs}
with antisymmetric initial values of the form
$u_0 = (-1)^m\partial_1\partial_2 \cdots \partial_m\delta$
were studied.  In the current paper, as in \cite{MTW,TW2}, initial values of the form
$u_0 = (-1)^m\partial_1\partial_2 \cdots \partial_m|\cdot|^{-\gamma}$, for some $0 < \gamma < N$,
are considered.

In order to state our results precisely, we begin by recalling the definition of an antisymmetric function.
\begin{definition}
Let $m\in \{1,2, \cdots, N\}$.
A function $f:\Rd\to\R$ is antisymmetric with respect to $x_1, \cdots, x_m$ if it satisfies
\begin{equation}
\label{antisym}
        T_1 f=T_2 f=\cdots=T_m f= - f,
\end{equation}
where $T_i$ , $i\in \{1, 2, \cdots, N\}$, denote the operator
\[
      [T_i f](x_1,\cdots, x_{i-1}, x_i, x_{i+1}, \cdots, x_N)=      f(x_1,\cdots, x_{i-1}, -x_i, x_{i+1}, \cdots, x_N).
\]
We denote the set of functions antisymmetric with respect to $x_1, \cdots, x_m$  by
\begin{equation}
\label{antisymspc}
        \mathcal{A}=\mathcal{A}_m = \{f:\Rd\to\R; \; f\ {\rm satisfies}\ \eqref{antisym}\}.
\end{equation}
\end{definition}
A function on $\Rd$ which is antisymmetric with respect to
$x_1,\; x_2,\; \cdots,\; x_m$, for some $m\in \{1,2, \cdots, N\}$, is determined
by its values on
$\Omega_m$, the sector of $\Rd$ defined by
\begin{equation}
 \label{dmn}
\Omega_m = \left\{(x_1,\; x_2,\; \cdots,\; x_N) \in
\Rd;\; x_1>0,\; x_2>0,\cdots,\; x_m>0\right\}.
\end{equation}
Note that by definition, an antisymmetric function must take the value $0$
on the boundary $\partial\Omega_m$.  Since the operators $T_i$ defined above commute
with the operations in equation \eqref{nheq}, the study of antisymmetric solutions to \eqref{nheq}
 reduces to the study of solutions on $\Omega_m$ with Dirichlet boundary conditions.
This point is discussed in detail in Section 3 of \cite{TW2}, and that discussion applies as well
to the heat equation with  absorption.  Moreover, as in \cite{TW2}, we will construct certain
classes of antisymmetric solutions to \eqref{nheq} on $\Rd$ by constructing solutions on
$\Omega_m$ and extending them to $\Rd$ by antisymmetry.

Since both the present paper and \cite{TW2} are based on the framework developed in \cite{MTW},
we need to recall
 some definitions and notation used in \cite{MTW}. Let $\rho_m$ be the weight function defined on $\Omega_m$ by
\[
   \rho_m (x) =\dfrac{|x|^{\gamma +2m}}{x_1 \cdots x_m},\ \ \text{for all}
   \ x \in \Omega_m ,
\]
where $0<\gamma<N$.
We consider the Banach space
\begin{equation}
\label{spc}%
\X = \left\{ \psi :\Omega_m \to \R,\quad
\rho_m \psi \in L^\infty(\Omega_m)\right\} ,
\end{equation}
 endowed with the norm
\[
  \|\psi\|_{\X}=\left\|\rho_m \psi \right\|_{ L^\infty(\Omega_m)},
\]
for all $\psi \in \X$.
The closed ball of radius $M$ on $\X$ is denoted by
\begin{equation}\label{ball}
\B= \{ \psi \in \X \ \text{such\ that}\ \|\psi\|_{\X} \le M\}.
\end{equation}
As observed in \cite[p. 344]{MTW}, $\Bs$ the closed ball $\B$ endowed with the
weak$^\star$ topology of $\X$, is a compact metric space (hence complete and separable).

Let $\sigma> 0$. For each $\lambda> 0$, we let $D_{\lambda}^{\sigma}$ denote the dilation operator defined  by
\begin{equation}
\label{dilop} %
 D_{\lambda}^{\sigma} u(x) = \lambda^\sigma u(\lambda x),
\end{equation}
where $u$ is a function defined on $\Omega_m$, or on $\Rd$.
A function $\psi: \Omega_m \to\R$ is homogeneous of degree $-\sigma$  if  $D_{\lambda}^{\sigma}\psi=\psi$ for all $\lambda > 0$.
The operators $D_{\lambda}^{\sigma}, \lambda >0$, act on the spaces $\X$, but leave the norm invariant, {i.e.} leave  the ball $\B$ invariant, if and only if $\sigma=\gamma+m$. In fact, we have
\begin{equation}
\label{dilisom} %
 \|D_{\lambda}^{\sigma}\psi\|_{\X}={\lambda}^{\sigma}   \|\rho_m\psi(\lambda \cdot)\|_{L^\infty(\Omega_m)}=
 {\lambda}^{\sigma - (\gamma + m)}  \|\rho_m(\lambda \cdot)\psi(\lambda \cdot)\|_{L^\infty(\Omega_m)} =
  {\lambda}^{\sigma - (\gamma + m)}\|\psi\|_{\X},
\end{equation}
for all $\lambda > 0$ and $\psi \in \X$.

The function $\psi_0$ defined on $\Omega_m$ by
\begin{equation}
\label{psi0} %
\psi_0(x)= c_{m,\gamma} \left(\rho_m(x)\right)^{-1} = c_{m,\gamma} x_1 \cdots x_m |x|^{-\gamma-2m},
 \quad x \in \Omega_m,
\end{equation}
where $c_{m,\gamma}  = \gamma (\gamma+2) \cdots (\gamma+2m-2)$, will play a central role.
It is homogeneous of degree $-(\gamma+m)$,
belongs to $\X$ and satisfies $\|\psi_0\|_{\X} = c_{m,\gamma}$. Moreover,
\begin{equation}  \label{psi0prop} %
D_{\lambda}^{\sigma}\psi_0(x) = \lambda^{\sigma-(\gamma+m)} \psi_0(x) ,
\end{equation}
for all $\sigma, \lambda> 0$, and so $\| D_{\lambda}^{\sigma}\psi_0 \|_{\X}= \lambda^{\sigma-(\gamma+m)} c_{m,\gamma}$.
Its interest lies in the fact that
\begin{equation}
\label{psi00} %
  \psi_0(x)= (-1)^m \partial_1 \partial_2 \cdots \partial_m\left( |x|^{-\gamma}\right),  \quad x \in \Omega_m.
\end{equation}


The heat semigroup on $\Omega_m$, denoted $e^{t\Delta_m}$, is given by
\begin{equation}
\label{sgomg}%
    e^{t\Delta_m} \psi (x)=\int_{\Omega_m} K_t(x,y) \psi(y) dy\; ,
\end{equation}
for all $t>0$, where
\begin{equation} \label{hksec}
K_t (x, y) = (4\pi t)^{-\frac{N}{2}} \displaystyle \prod_{j=m+1}^{N}
e^{-\frac{|x_j-y_j|^2}{4t}} \prod_{i=1}^{m}
\left[e^{-\frac{|x_i-y_i|^2}{4t}}-e^{-\frac{|x_i+y_i|^2}{4t}} \right] .
\end{equation}
See, for example, \cite[Proposition 3.1, p. 514]{TW}.
It is well-known that $e^{t\Delta_m}$ is a $C_0$ semigroup on $C_0(\Omega_m)$, the space of continuous functions $f:\overline{\Omega}_m\to \R$ such that $f\equiv 0$ on the boundary $\partial \Omega_m$ and $f(x) \to 0$ as $|x|\to \infty$ in $\Omega_m$.
It is also well defined on $\X$ and $e^{t\Delta_m}: \X \to C_0(\Omega_m)\cap\X$ is continuous, for all $t>0$. See \cite[Theorem 1.1, p. 343]{MTW}.
We recall the commutation relation between $e^{t\Delta_m}$ and the operators $D_{\lambda}^{\sigma}$,
\begin{equation}
\label{dilsmg}%
 D_{\lambda}^{\sigma}  e^{\lambda^2 t \Delta_m} = e^{t \Delta_m} D_{\lambda}^{\sigma}
\end{equation}
for all $\lambda>0$ and $\sigma>0$, and for future use we note the following identity, which is immediate to verify,
\begin{equation} \label{hksgident}
 \int_{ \Omega_m} K_t(x,y)\; y_1\cdots y_m\; dy =  x_1 \cdots x_m ,
\end{equation}
for all $t>0$ and all $x\in\Omega_m$.

In terms of behavior on the sectors $\Omega_m$, our goal is to study the well-posedness of the equation \eqref{nheq} on the space  $\X$ and to obtain results on the large time behavior of solutions in the three cases $\alpha=2/(\gamma+m)$,
$\alpha > 2/(\gamma+m)$ and  $\alpha<2/(\gamma+m)$. By interpreting these results for antisymmetric solutions
on $\Rd$, we will extend some know results, \cite[Theorem 1.3, Theorem 1.4]{CDW} and \cite{GVL}, in the case $m=0.$
We now describe these results in detail.

In Section \ref{existsec}, we consider the Cauchy problem
\begin{equation} \label{cpe} %
\left\lbrace
\begin{array}{l}
u_t - \Delta u +  |u|^\alpha u =0,\\
u(0)=u_0 \in \X  .
\end{array}
\right.
\end{equation}
 It is well known that, given any $u_0\in C_0(\Rd)$ there exists a unique function $u\in C( [0,\infty), C_0(\Rd) )$ which is a classical solution of \eqref{nheq} on $\Rd$ for $t > 0$ and such that $u(0)=u_0$,
 which we denote by
 \begin{equation}
\label{sflowrn}%
 u(t)= \mathcal S(t) u_0 ,
\end{equation}
 where $u(t) = u(t,\cdot)$.
Likewise, for any $u_0\in C_0(\Omega_m)$, there exists a unique function $u\in C( [0,\infty), C_0(\Omega_m) )$ which is a classical solution of \eqref{nheq} for $t > 0$ and such that $u(0)=u_0$.  This defines a global semi-flow $\mathcal S_m(t)$ on $C_0(\Omega_m)$.  In other words,
\begin{equation} \label{sflow}%
\mathcal S_m(t) u_0 = u(t) ,
\end{equation}
where $u(t) = u(t,\cdot)$ is the solution of \eqref{nheq} with initial value
$u_0 \in C_0(\Omega_m)$.  In fact, existence and uniqueness of solutions in $C_0(\Omega_m)$ follows from
the existence and uniqueness of solutions in $u_0\in C_0(\Rd)$ since
$\mathcal  S(t)$ preserves antisymmetry: it suffices to consider the anti-symmetric extension of $u_0\in C_0(\Omega_m)$
to an element of  $C_0(\Rd)  \cap  \mathcal{A}$.

Similarly, given any $u_0\in L^q(\Omega_m)$, $1\le q< \infty$, we deduce by Kato's parabolic inequality (see Lemma \ref{kato} and Corollary \ref{cki} in the appendix) and the fact that $\D$ is dense in $L^q(\Omega_m)$, that there exists a unique
$u\in C( [0,\infty), L^q(\Omega_m) )$ which is a classical solution of \eqref{nheq} for $t > 0$ and such that $u(0)=u_0$. Alternatively, see \cite[Proposition 1.1, p. 261]{GVL} for a proof using accretive operators. Again by preservation of antisymmetry, the result of \cite{GVL}, valid for $\R^N$,  holds also on $\Omega_m$. Thus, the semi-flow $\mathcal S_m(t)$ extends to
$L^q(\Omega_m)$ and formula \eqref{sflow} is valid also
 for $u_0 \in L^q(\Omega_m)$.

Here we consider initial data $u_0\in \X.$ Our first main result is the following.
\begin{theorem} \label{exist}
Let $m\in \{1,2,\cdots,N\}$, $0<\gamma<N$ and $\alpha>0$. If $u_0\in \X$, then there exists a unique solution $u\in C( (0,\infty), C_0(\Omega_m) )$ of the equation \eqref{nheq} such that
\begin{enumerate}
\item [(i)] $u(t) \to u_0$ in $L^1_{\text{loc}}(\Omega_m)$ as $t\to 0$ ;
\item [(ii)] there exists $C > 0$, independent of  $u_0$, such that $\|u(t)\|_{\X} \le C\|u_0\|_{\X}$ for all $t > 0$.
\end{enumerate}
In addition,  the following properties hold.
\begin{enumerate}
\item [(iii)] For all $v_0 \in \X$, $|u(t) - v(t)|\le e^{t \Delta_m} |u_0 - v_0|$, where $v$ is the solution of \eqref{nheq} with initial value $v_0$ satisfying (i) and (ii).
\item [(iv)] There exists $C > 0$ such that $|u(t,x)| \le C \; x_1\cdots x_m \left(t+|x|^2 \right)^{-\frac{\gamma+2m}{2}} \|u_0\|_{\X}$ for all $t > 0$ and for all $u_0\in \X$.
\item [(v)] The solution $u(t)$ satisfies the integral equation
\begin{equation}
\label{nheqINTE}
          u(t)= e^{t\Delta_m} u_0 - \int_0^t e^{(t-s)\Delta_m}\left( |u(s)|^{\alpha} u(s)  \right) ds,
\end{equation}
for all $t>0$, where the integrand is in $L^1((0,t) ; C_0(\Omega_m))$.
\item [(vi)] If $v_0 \in \X$, $v_0 \ge 0$, and $|u_0| \le v_0$, then $|u(t)| \le  v(t)$, where $v$ is the solution of \eqref{nheq} with initial value $v_0$ satisfying (i) and (ii).
\end{enumerate}
\end{theorem}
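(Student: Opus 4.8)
The plan is to construct the solution not by a contraction argument but by approximation from the semi-flow $\mathcal S_m(t)$ already available on $C_0(\Omega_m)$ and $L^q(\Omega_m)$, exploiting that the absorption term has the favorable sign and so can only improve the a priori bounds. The engine throughout is Kato's inequality (Lemma \ref{kato} and Corollary \ref{cki}): if $u,v$ are two solutions of \eqref{nheq}, then $w=u-v$ satisfies $\partial_t|w|-\Delta_m|w|\le-\,\mathrm{sgn}(u-v)\,(|u|^\alpha u-|v|^\alpha v)\le 0$ because $r\mapsto|r|^\alpha r$ is nondecreasing; hence $|u(t)-v(t)|\le e^{(t-s)\Delta_m}|u(s)-v(s)|$ for $s<t$. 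This single inequality yields the comparison statement (iii), the domination (vi) (taking $v\ge 0$ and using that $|u|$ is a subsolution while $v$ is a solution), and---taking $v\equiv 0$---the basic bound $|u(t)|\le e^{t\Delta_m}|u_0|$ from which (ii) and (iv) follow. The favorable sign is exactly what makes all of this work for every $\alpha>0$ with no smallness or criticality restriction.

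First I would record the linear estimate, essentially contained in \cite{MTW}: since $|u_0|\le c_{m,\gamma}^{-1}\|u_0\|_{\X}\,\psi_0$ pointwise by definition of $\X$, and $\psi_0$ is homogeneous of degree $-(\gamma+m)$ with the explicit form \eqref{psi0}, a scaling argument through \eqref{dilsmg} gives
$$e^{t\Delta_m}|u_0|(x)\le C\,x_1\cdots x_m\,(t+|x|^2)^{-\frac{\gamma+2m}{2}}\,\|u_0\|_{\X},$$
which is exactly (iv), and multiplying by $\rho_m$ shows $\|e^{t\Delta_m}|u_0|\|_{\X}\le C\|u_0\|_{\X}$, giving (ii). Next I would approximate: choose $u_{0,n}\in C_0(\Omega_m)\cap\X$ (suitable truncations) with $|u_{0,n}|\le|u_0|$, $u_{0,n}\to u_0$ in $L^1_{\mathrm{loc}}(\Omega_m)$ and $\|u_{0,n}\|_{\X}\le\|u_0\|_{\X}$, and set $u_n=\mathcal S_m(t)u_{0,n}$. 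Kato's inequality applied to $u_n-u_k$ gives $|u_n(t)-u_k(t)|\le e^{t\Delta_m}|u_{0,n}-u_{0,k}|$, and the integrability of the $\X$-majorant against the kernel $K_t$ forces this sequence to be Cauchy locally uniformly on $(0,\infty)\times\Omega_m$; the limit $u\in C((0,\infty),C_0(\Omega_m))$ inherits (ii), (iii), (iv), (vi) by passing to the limit in the corresponding inequalities for $u_n$.

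For the integral equation (v), each $u_n$ satisfies \eqref{nheqINTE} with a continuous integrand, and I would pass to the limit by dominated convergence. The delicate point is integrability of $s\mapsto e^{(t-s)\Delta_m}(|u(s)|^\alpha u(s))$ near $s=0$: the pointwise bound (iv) yields $\|u(s)\|_{C_0}\sim s^{-(\gamma+m)/2}$, so the nonlinear term is individually too singular for a crude $C_0$ bound when $\alpha$ is large. The saving structure is that $|u(s)|^\alpha u(s)$ vanishes to high order on $\partial\Omega_m$, and applying the semigroup one gains the extra decay encoded in the moment identity \eqref{hksgident}; estimating through the first-order boundary vanishing of $K_t$ is what restores integrability for every $\alpha>0$. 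The initial condition (i) then follows since $e^{t\Delta_m}u_0\to u_0$ in $L^1_{\mathrm{loc}}$ while the Duhamel term tends to $0$ as $t\to 0$.

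Finally, uniqueness is where the hypotheses (i) and (ii) do the real work, and is in my view the main obstacle. Given two solutions $u,\tilde u$ satisfying (i) and (ii), I would apply Kato's inequality on $[\epsilon,t]$ to get $|u(t)-\tilde u(t)|\le e^{(t-\epsilon)\Delta_m}|u(\epsilon)-\tilde u(\epsilon)|$ and let $\epsilon\to 0$. By (i) the difference $|u(\epsilon)-\tilde u(\epsilon)|\to 0$ in $L^1_{\mathrm{loc}}$, while by (ii) it is dominated in $\X$, so $|u(\epsilon)-\tilde u(\epsilon)|\le 2C c_{m,\gamma}^{-1}\|u_0\|_{\X}\,\psi_0$; splitting $\int K_t(x,y)|u(\epsilon)-\tilde u(\epsilon)|\,dy$ into a fixed ball (where $L^1$ convergence kills it) and a tail (controlled uniformly by the integrable $\X$-majorant) shows the right-hand side tends to $0$, whence $u=\tilde u$. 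The subtle part is precisely this interchange of limits with highly singular initial data: the uniform bound (ii) is exactly what controls the contributions of the singularity at the origin and of the tails at infinity, and without it uniqueness would fail.
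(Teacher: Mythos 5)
Your overall strategy --- approximation by truncations of $u_0$, Kato's parabolic inequality to get the Cauchy property in $C_0(\Omega_m)$ and the comparison statements (iii), (vi), and the linear majorant $e^{t\Delta_m}|u_0|$ for (ii) and (iv) --- is the same as the paper's, and those parts are sound (the paper packages your ``fixed ball plus tail'' splitting as Lemma~\ref{cvlocsg}, proved via the weak$^\star$ compactness of $\Bs$ and the continuity of $e^{t\Delta_m}:\Bs\to C_0(\Omega_m)$ from \cite{MTW}). The genuine gap is in your proof of (v). You correctly observe that the crude bound fails: (iv) gives $\|u(s)\|_{L^\infty}\lesssim s^{-(\gamma+m)/2}$, so $\||u(s)|^{\alpha+1}\|_{L^\infty}\lesssim s^{-(\gamma+m)(\alpha+1)/2}$ is not integrable near $s=0$ once $(\gamma+m)(\alpha+1)\ge 2$. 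But your proposed repair --- the high-order boundary vanishing of $|u(s)|^{\alpha}u(s)$ together with the moment identity \eqref{hksgident} --- does not work. Quantitatively, the best that identity can give is
\begin{equation*}
e^{(t-s)\Delta_m}|u(s)|^{\alpha+1}(x)\le x_1\cdots x_m\,\sup_{y\in\Omega_m}\bigl[(y_1\cdots y_m)^{-1}|u(s,y)|^{\alpha+1}\bigr],
\end{equation*}
and by (iv) this supremum, attained at the parabolic scale $|y|\sim\sqrt s$, is of order $s^{-[(\gamma+m)\alpha+\gamma+2m]/2}$ --- \emph{worse} than the crude bound, because extracting the factor $x_1\cdots x_m$ costs $s^{-m/2}$ at that scale. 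The point is that the boundary vanishing of the nonlinearity is concentrated exactly where each $y_i\sim\sqrt s$, so the kernel's first-order vanishing buys nothing there; no bookkeeping along these lines produces an $s$-integrable majorant valid for every $\alpha>0$.

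What actually saves (v) --- and it is the step your opening paragraph gestures at (``the favorable sign'') but never deploys --- is the universal absorption bound \eqref{upperbd}, $|u_n(s)|\le(\alpha s)^{-1/\alpha}$, which holds for every solution independently of the size of the data. Interpolating it against (iv),
\begin{equation*}
|u_n(s)|^{\alpha+1}=|u_n(s)|^{\alpha(1-\varepsilon)}\,|u_n(s)|^{1+\alpha\varepsilon}\le \frac{C}{s^{1-\varepsilon}}\;x_1\cdots x_m\,(s+|x|^2)^{-\frac{\gamma'+2m}{2}},\qquad \gamma'+m=(\gamma+m)(1+\alpha\varepsilon),
\end{equation*}
where the excess boundary factors are discarded via $x_1\cdots x_m(s+|x|^2)^{-m/2}\le 1$, and where $0<\varepsilon<\frac{N-\gamma}{\alpha(\gamma+m)}$ guarantees $\gamma'<N$. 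Corollary~\ref{esttaunew} then yields $e^{(t-s)\Delta_m}|u_n(s)|^{\alpha+1}\le C s^{-(1-\varepsilon)}x_1\cdots x_m(t+|x|^2)^{-\frac{\gamma'+2m}{2}}$, an $s$-integrable majorant uniform in $n$, which simultaneously shows the integrand lies in $L^1((0,t);C_0(\Omega_m))$ and justifies the dominated-convergence passage to the limit in the Duhamel formula. Note finally that since you derive (i) from (v) (via vanishing of the Duhamel term as $t\to 0$), the gap propagates to (i); the paper avoids this by proving (i) directly from the approximations $u_n$, using $u_n(t)\to u_{0,n}$ in $L^p(\Omega_m)$ together with Proposition~\ref{cvdd}, before ever touching the integral equation.
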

In other words, the nonlinear operators $\mathcal S_m(t)$, $t>0$, extend in a natural  way to $\X$. We remark that in the case $\alpha<2/(\gamma+m),$ this well-posedness result  was  established in \cite[Theorems 2.3 and 2.6]{TW2} by a different method and with plus and minus sign in the term of the nonlinearity. Furthermore, the  analogous results on the whole space $\Rd$ follows from \cite[Theorem 8.8, p. 536]{CDEW}.

\begin{definition} \label{sf}
Let $m\in \{1,2,\cdots,N\}$, $0<\gamma<N$ and $\alpha>0$.
Given $u_0\in \X$ we set
 \[
          \mathcal S_m(t) u_0 = u(t),
\]
for all $t>0$, where $u\in C( (0,\infty), C_0(\Omega_m) )$ is the unique solution of \eqref{nheq} satisfying $(i)$ and $(ii)$ of the Theorem \ref{exist}.
\end{definition}

We also establish the continuous dependence properties of solutions of equation \eqref{nheq} with initial values in $\X$.
\begin{theorem} \label{depcont}
Let $m\in \{1,2,\cdots,N\}$, $0<\gamma<N$ and $M>0$. It follows that  $\mathcal S_m(t)$ is continuous $\Bs \to C_0(\Omega_m)$, for all $t>0$, where $\Bs$ denotes the compact metric space  topology induced by the weak* topology on $\B$.
\end{theorem}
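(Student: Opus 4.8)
The plan is to exploit the compactness of $\Bs$ to reduce the statement to sequential continuity, and then to combine the uniform estimates of Theorem \ref{exist} with a compactness-and-identification argument based on the integral equation \eqref{nheqINTE}. Since $\Bs$ is metrizable, it suffices to show that whenever $u_0^n \to u_0$ in $\Bs$ (equivalently $\rho_m u_0^n \to \rho_m u_0$ in the weak$^\star$ topology of $L^\infty(\Omega_m)$, with $\|u_0^n\|_{\X}\le M$), one has $\mathcal S_m(t)u_0^n \to \mathcal S_m(t)u_0$ in $C_0(\Omega_m)$ for each fixed $t>0$.

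First I would treat the linear flow, that is, show $e^{t\Delta_m}\colon \Bs \to C_0(\Omega_m)$ is continuous (this may also be available in \cite{MTW}). For fixed $x\in\Omega_m$ and $t>0$, writing $e^{t\Delta_m}u_0^n(x)=\int_{\Omega_m}\bigl(K_t(x,y)\rho_m(y)^{-1}\bigr)\,(\rho_m u_0^n)(y)\,dy$ and noting that $K_t(x,\cdot)\rho_m(\cdot)^{-1}\ge 0$ has $L^1(\Omega_m)$-norm equal to $c_{m,\gamma}^{-1}e^{t\Delta_m}\psi_0(x)<\infty$ (by \eqref{psi00} and the mapping property $e^{t\Delta_m}\colon\X\to C_0(\Omega_m)$), weak$^\star$ convergence yields the pointwise limit $e^{t\Delta_m}u_0^n(x)\to e^{t\Delta_m}u_0(x)$. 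To upgrade this to convergence in $C_0(\Omega_m)$ I would use the uniform domination $|e^{t\Delta_m}u_0^n(x)|\le c_{m,\gamma}^{-1}M\,e^{t\Delta_m}\psi_0(x)$, whose right-hand side lies in $C_0(\Omega_m)$ since $\psi_0\in\X$, hence is uniformly small near $\partial\Omega_m$ and near infinity; together with equicontinuity on compact subsets of $\Omega_m$ (from the smoothing of $e^{(t/2)\Delta_m}$ acting on the uniformly bounded family $\{e^{(t/2)\Delta_m}u_0^n\}_n\subset C_0(\Omega_m)$) this gives uniform convergence.

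For the nonlinear flow, set $u^n(t)=\mathcal S_m(t)u_0^n$ and $u(t)=\mathcal S_m(t)u_0$. By Theorem \ref{exist}(iv) the family obeys the $n$-independent bound $|u^n(t,x)|\le C\,x_1\cdots x_m(t+|x|^2)^{-\frac{\gamma+2m}{2}}M$; interior parabolic regularity (or the smoothing of $e^{(t-t_0)\Delta_m}$ in \eqref{nheqINTE}) then yields equicontinuity on compact subsets of $(0,\infty)\times\Omega_m$, so by Arzel\`a--Ascoli any subsequence of $\{u^n\}$ has a sub-subsequence $u^{n_k}$ converging, locally uniformly on $(0,\infty)\times\Omega_m$, to some $w$. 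I would pass to the limit in \eqref{nheqINTE}: the first term converges to $e^{t\Delta_m}u_0$ by the linear step, while for the Duhamel term I would apply dominated convergence in the measure $K_{t-s}(x,y)\,dy\,ds$, using $|u^{n_k}(s)|^\alpha u^{n_k}(s)\to|w(s)|^\alpha w(s)$ and the common majorant $G(s,y)=\bigl(C\,y_1\cdots y_m(s+|y|^2)^{-\frac{\gamma+2m}{2}}M\bigr)^{\alpha+1}$, the finiteness of $\int_0^t e^{(t-s)\Delta_m}G(s,\cdot)(x)\,ds$ being exactly the estimate that makes the integrand in \eqref{nheqINTE} belong to $L^1((0,t);C_0(\Omega_m))$. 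Thus $w$ solves \eqref{nheqINTE} with data $u_0$ and inherits (ii) and (iv) in the limit, so by the uniqueness of Theorem \ref{exist} we get $w=u$; since the limit is independent of the subsequence, the whole sequence converges locally uniformly, and the uniform $C_0$-domination by $C\,x_1\cdots x_m(t+|x|^2)^{-\frac{\gamma+2m}{2}}M$ promotes this to convergence in $C_0(\Omega_m)$.

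The main obstacle I anticipate is the passage to the limit in the nonlinear term near $s=0$: one must verify that (iv) furnishes a single integrable majorant valid for all $n$, both in the time integration and after applying $e^{(t-s)\Delta_m}$, so that dominated convergence is legitimate despite the singularity of the data at $t=0$. A secondary delicate point is checking that the locally uniform limit $w$ genuinely belongs to the uniqueness class; this is cleanest to handle by noting that the linear terms $e^{t\Delta_m}u_0$ in the integral equations for $w$ and for $u$ coincide, so their difference satisfies a homogeneous integral equation amenable to a Gronwall estimate in the weighted norm of $\X$.
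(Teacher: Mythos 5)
Your overall architecture (metrizability of $\Bs$ reduces to sequences; the uniform bound of Theorem \ref{exist}(iv) plus parabolic regularity gives a locally uniformly convergent subsequence; identify the limit; conclude by uniqueness and upgrade to $C_0(\Omega_m)$ via the decaying envelope) is sound and parallels the paper's proof, which runs through Lemma \ref{lemex}. The genuine gap sits exactly at the point you flagged as the main obstacle: the majorant $G(s,y)=\bigl(C\,y_1\cdots y_m(s+|y|^2)^{-\frac{\gamma+2m}{2}}M\bigr)^{\alpha+1}$, obtained by raising the bound (iv) to the power $\alpha+1$, is \emph{not} integrable for all admissible parameters, and its finiteness is \emph{not} the estimate the paper proves for Theorem \ref{exist}(v). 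Indeed, $G(s,\cdot)$ increases as $s\downarrow0$ to $G(0,y)=C'(y_1\cdots y_m)^{\alpha+1}|y|^{-(\gamma+2m)(\alpha+1)}$, and since \eqref{kernelest} gives the kernel lower bound $K_\tau(x,y)\ge c_\tau(x)\,y_1\cdots y_m$ for $|y|\le1$, restricting to the cone $\{y\in\Omega_m:\ y_i\ge |y|/(2\sqrt N)\}$ yields
\[
e^{\tau\Delta_m}G(0,\cdot)(x)\ \ge\ c\,c_\tau(x)\int_0^1 r^{\,m(\alpha+2)-(\gamma+2m)(\alpha+1)+N-1}\,dr ,
\]
whose exponent equals $N-1-\gamma-\alpha(\gamma+m)$. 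Hence $e^{\tau\Delta_m}G(0,\cdot)\equiv+\infty$ as soon as $\alpha(\gamma+m)\ge N-\gamma$, and the same computation on $\{\sqrt s\le |y|\le 1\}$ shows $e^{(t-s)\Delta_m}G(s,\cdot)(x)\ge c(t,x)\,s^{\frac{N-\gamma-\alpha(\gamma+m)}{2}}$ for small $s$, so that $\int_0^t e^{(t-s)\Delta_m}G(s,\cdot)(x)\,ds=+\infty$ whenever $\alpha(\gamma+m)\ge N-\gamma+2$ (e.g.\ $N=m=1$, $\gamma=1/2$, $\alpha=2$). Since the theorem covers all $\alpha>0$, your dominated convergence argument collapses precisely in this range; moreover, both your passage to the limit in the Duhamel term and your verification that the limit $w$ lies in the uniqueness class funnel through this same majorant.

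The repair is the interpolation the paper uses to prove (v): combine (iv) with the universal bound \eqref{upperbd}, $|u^n(s)|\le(\alpha s)^{-1/\alpha}$, writing $|u^n(s)|^{\alpha+1}=|u^n(s)|^{\alpha(1-\varepsilon)}\,|u^n(s)|^{1+\alpha\varepsilon}$ with $0<\varepsilon<\min\{1,\tfrac{N-\gamma}{\alpha(\gamma+m)}\}$; this gives the $n$-independent, $s$-integrable majorant $\tfrac{C(M)}{s^{1-\varepsilon}}\,x_1\cdots x_m(s+|x|^2)^{-\frac{\gamma'+2m}{2}}$ with $\gamma<\gamma'<N$ (estimate \eqref{unalpha}), which Corollary \ref{esttaunew} converts, after applying $e^{(t-s)\Delta_m}$, into an integrable bound as in \eqref{integrand}. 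With this substitution your argument goes through, including the derivation of the initial-trace condition (i) for $w$ (your Gronwall alternative is itself shaky: the Lipschitz constant of the nonlinearity in the $\X$-norm is only $O(s^{-1})$, which is not integrable). Note finally that the paper's own proof avoids the integral equation altogether: Lemma \ref{lemex} produces a limiting solution $g$, and the identification is done by showing $g(t)\to u_0$ in $L^1_{loc}(\Omega_m)$ as $t\to0$, using only the uniform boundedness of the $u^n$ on bounded regular subdomains near $t=0$ (from (iv)) together with \cite[Lemma 2.6, p.~89]{CDW}, and then invoking the uniqueness of Theorem \ref{exist}; this route sidesteps the $s\to0$ majorant issue entirely.
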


It is well-known that any solution $u(t)$ of \eqref{nheq}, for example as constructed in Theorem~\ref{exist},  is always bounded by the spatially independent solution, more precisely
\begin{equation}
\label{upperbd}
|u(t,x)| \le \left(\frac{1}{\alpha t}\right)^{\frac{1}{\alpha}}
\end{equation}
for all $t > 0$, throughout the spatial domain of existence.  See for example \cite[page 261]{GVL}.  In addition,
it is clear from Theorem~\ref{exist} that if $u$ is the  solution of  \eqref{nheq} with positive initial data $u_0\geq 0$ then
\begin{equation}
\label{uleqlinear}
u(t)\leq e^{t\Delta_m} u_0,
\end{equation}
for any $t>0.$ We have the following upper estimate for solutions of \eqref{nheq} which  combines \eqref{uleqlinear} and \eqref{upperbd} into one estimate which implies them both.
Its proof is given in Section~\ref{upperbound}.

\begin{proposition}
\label{preciseupperestimate}
Let $N\geq 1,\; m\in \{1,\cdots, N\},\; 0<\gamma<N$ and $\alpha>0.$ Let $u_0 \in \X$, $u_0 \ge 0.$ Then the solution $u$ of \eqref{nheq} with initial data $u(0)=u_0$ satisfies the following upper estimate
\begin{equation}
\label{upperestimates3}
u(t,x)\leq \frac{e^{t\Delta_m} u_0(x)}{\left(1 + \alpha t\left(e^{t\Delta_m} u_0(x)\right)^{\alpha}\right)^{\frac{1}{\alpha}}}
\end{equation}
for all $t>0,$ and all $x\in \Omega_m.$
\end{proposition}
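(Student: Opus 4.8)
The plan is to exhibit the right-hand side of \eqref{upperestimates3} as a supersolution of \eqref{nheq}, obtained by composing the linear heat flow with the solution of the spatially homogeneous absorption ODE, and then to conclude by a comparison principle. Write $v = e^{t\Delta_m}u_0$ and, for $r\ge 0$ and $t>0$, set
\[
\Phi(t,r) = \frac{r}{(1+\alpha t r^\alpha)^{1/\alpha}},
\]
so that the claim becomes $u(t,x)\le \Phi\bigl(t,v(t,x)\bigr)$. Since $u_0\ge 0$ and the kernel $K_t(x,y)$ of \eqref{hksec} is strictly positive on $\Omega_m\times\Omega_m$, we have $v\ge 0$; moreover $v$ is smooth in the interior of $\Omega_m$ for $t>0$, solves $v_t=\Delta v$, vanishes on $\partial\Omega_m$, and $v(t)\to u_0$ as $t\to0$. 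Likewise, $u_0\ge0$ forces $u\ge0$ (by Kato's inequality, Corollary~\ref{cki}, the negative part of $u$ is a subsolution of the heat equation with zero initial and boundary data), so that $|u|^\alpha u=u^{1+\alpha}$ throughout.

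First I would record the elementary properties of $\Phi$. A direct computation gives $\Phi_r=(1+\alpha t r^\alpha)^{-(1+\alpha)/\alpha}>0$ and $\Phi_{rr}=-(1+\alpha)\alpha t\,r^{\alpha-1}(1+\alpha t r^\alpha)^{-(1+2\alpha)/\alpha}\le 0$, so $\Phi$ is increasing and concave in $r$; and $\Phi(\cdot,r)$ is precisely the solution of the pure absorption problem, namely $\Phi_t=-\Phi^{1+\alpha}$, with $\Phi(0,r)=r$ and $\Phi(t,0)=0$. Setting $w(t,x)=\Phi\bigl(t,v(t,x)\bigr)$ and using $v_t=\Delta v$ together with the chain rule,
\[
w_t-\Delta w = \Phi_t(t,v)+\Phi_r(t,v)\,(v_t-\Delta v)-\Phi_{rr}(t,v)\,|\nabla v|^2 = \Phi_t(t,v)-\Phi_{rr}(t,v)\,|\nabla v|^2 .
\]
Since $\Phi_{rr}\le0$, the last term is nonnegative, whence $w_t-\Delta w\ge \Phi_t(t,v)=-w^{1+\alpha}$; that is, $w$ is a supersolution of \eqref{nheq} on $\Omega_m$, it vanishes on $\partial\Omega_m$, and $w(t)\to u_0$ as $t\to0$ (from $0\le w\le v$ and $v(t)\to u_0$ in $L^1_{\mathrm{loc}}$).

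Finally, I would compare $u$ and $w$ by Kato's inequality (Lemma~\ref{kato} and Corollary~\ref{cki}). With $z=u-w$ one has $z_t-\Delta z\le w^{1+\alpha}-u^{1+\alpha}$, and on the set where $u>w$ the right-hand side is negative because $u,w\ge0$ and $r\mapsto r^{1+\alpha}$ is increasing; hence $\partial_t z^+-\Delta z^+\le0$ in the weak sense, with $z^+=0$ on $\partial\Omega_m$ and $z^+(t)\to0$ as $t\to0$. Propagating this through the heat semigroup gives $z^+(t)\le e^{t\Delta_m}z^+(0)=0$, i.e. $u\le w$, which is exactly \eqref{upperestimates3}.

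The main obstacle will be making this comparison rigorous for a general singular datum $u_0\in\X$: the identity for $w_t-\Delta w$ is classical only for $t>0$ in the interior of $\Omega_m$, and one must control the initial layer as $t\to0$, where $u(t)$ and $w(t)$ converge to $u_0$ only in $L^1_{\mathrm{loc}}(\Omega_m)$. I expect the cleanest route is to establish the estimate first for regular data, say bounded $u_0\in C_0(\Omega_m)\cap\X$, where the parabolic comparison principle applies directly on $[0,T]\times\Omega_m$, and then to pass to arbitrary $u_0\in\X$ by approximation, using the continuous dependence of Theorem~\ref{depcont} together with the monotone dependence on the data in property~(vi) of Theorem~\ref{exist}.
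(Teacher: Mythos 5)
Your argument is correct in substance, but it takes a genuinely different route from the paper's. The paper (Theorem~\ref{genupest} in Section~\ref{upperbound}) constructs no supersolution and invokes no comparison principle: it works directly with the integral equation \eqref{nheqINTE}, which the solution of Theorem~\ref{exist} is already known to satisfy (property (v)) even for singular $u_0\in\X$. Fixing $\tau>0$, one sets $G(t)=e^{(\tau-t)\Delta_m}u(t)$, so that $G$ is continuously differentiable with $G'(t)=-e^{(\tau-t)\Delta_m}\bigl(u(t)^{\alpha+1}\bigr)$; since the measure $K_{\tau-t}(x,\cdot)\,dy$ has total mass at most one, Jensen's inequality applied to the convex function $r\mapsto r^{1+\alpha}$ gives $G'\le -G^{1+\alpha}$, and integrating this differential inequality on $[0,\tau]$ and then setting $t=\tau$ yields \eqref{upperestimates3} at once. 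This bypasses entirely what you correctly flag as the main obstacle in your approach: there is no initial layer to control and no approximation by regular data, because the only input is that $u$ solves the integral equation with values in $C_0(\Omega_m)$. Your route instead uses concavity in $r$ of the ODE flow $\Phi(t,r)$ to build the pointwise supersolution $w=\Phi(t,e^{t\Delta_m}u_0)$ and then compares; this is workable, with two caveats. First, the Kato-type inequality you need is for the positive part $z^+$, which is a variant of Lemma~\ref{kato} as stated (that lemma treats $|u|$ with ${\rm sign}(u)$; the same proof, with $F$ a smooth convex approximation of $r\mapsto r^+$, gives your version). Second, your regularization plan does go through, precisely because the solution of Theorem~\ref{exist} is itself built as a limit of solutions with truncated bounded data, so property (vi) and Lemma~\ref{cvlocsg} let you pass to the limit in $u_n\le w_n$. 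In the end both proofs rest on the convexity of $r\mapsto r^{1+\alpha}$ --- yours through $\Phi_{rr}\le 0$, the paper's through Jensen --- and both generalize to $u_t=\Delta u-f(u)$ with $f$ convex (cf.\ Remark~\ref{generalisation}); what the paper's version buys is brevity and immunity to regularity and comparison issues on the unbounded sector, while yours gives the more transparent supersolution picture of composing the linear flow with the absorption ODE.
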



 After proving global well-posedness of the Cauchy problem \eqref{cpe}, {\it i.e.} Theorems~\ref{exist} and \ref{depcont}, we seek to describe the large time behavior of solutions of \eqref{nheq} on $\Omega_m$  with initial values in $\X$.
 Our basic approach is to study the effect of certain space-time dilations on such a solution, and to relate the resulting
behavior to the effect of related spatial dilations on the initial value.  In particular
we consider the space-time dilation operators $\Gamma_\lambda^{\sigma}, \; \lambda > 0,$ defined by
\begin{equation}
\label{dilsolnew}%
\Gamma_\lambda^\sigma u(t, x) = \lambda^{ \sigma } u( \lambda^2 t, \lambda x)
=  D_\lambda^{\sigma} [u(\lambda^2 t)](x),
\end{equation}
for all $\lambda,\sigma > 0$. If $u\in C((0,\infty), C_0(\Omega_m))$ is solution of the equation \eqref{nheq} then $\Gamma_\lambda^\sigma u$ is solution of \eqref{nheq} if and only if $\sigma=2/\alpha$. Moreover, if a solution $u$ has initial value $u_0$, either in the sense of $C_0(\Omega_m)$ or in some more general sense, then $\Gamma_\lambda^{2/\alpha} u$ has initial value $D_\lambda^{2/\alpha} u_0$. If $u_0\in\X$, the function $D_\lambda^{2/\alpha}u_0$ belongs to $\X$, for all $\lambda>0,$ and the uniqueness of solutions of \eqref{nheq} implies that $\Gamma_\lambda^{2/\alpha} u$ coincides with
$\S_m(\cdot) D_\lambda^{2/\alpha}u_0$.
Thus, we have the following relation
\begin{equation} \label{invpropsmgtime}%
\Gamma_{\lambda}^{2/\alpha} \left[ \S_m(\cdot)u_0 \right] = \S_m(\cdot) \left[ D_{\lambda}^{2/\alpha} u_0 \right],
\end{equation}
for all $u_0\in \X$.
We emphasize that at this point there is no assumed relationship between $\alpha$ and $m$.
Formula \eqref{invpropsmgtime} holds for any semiflow generated by \eqref{nheq} in  place of $\S_m(\cdot)$,
as long as the space of initial values is invariant under the dilations $D_{\lambda}^{2/\alpha}$ and initial
values give rise to unique solutions.

A solution $u$ of \eqref{nheq} is self-similar if $\Gamma_\lambda^{2/\alpha} u =u$, for all $\lambda>0$, or equivalently if
\begin{equation} \label{selfsol}%
 u(t, x) = t^{- \frac{1}{\alpha} } f(x/\sqrt{t}) = D_{\frac{1}{\sqrt t}}^{2/\alpha} f( x) ,
\end{equation}
where $f(x)=u(1,x)$ is called the profile of $u$. It follows that if a self-similar solution $u$ of \eqref{nheq} has initial value $u_0$, then $D_\lambda^{2/\alpha} u_0 = u_0$, for all $\lambda>0$, i.e. $u_0$ is homogeneous of degree $-2/\alpha$.
Conversely, if $u_0$ is homogeneous of degree $-2/\alpha$ and $u(t)$ is a solution with initial value $u_0$ in some appropriate sense, then $\Gamma_\lambda^{2/\alpha} u$ has the same initial value,
for all $\lambda>0$.
Assuming that uniqueness of solutions having a given initial value has been proved in the
appropriate class of functions, one then concludes that $u = \Gamma_\lambda^{2/\alpha} u$, for all $\lambda>0$, i.e. that $u$ is a self-similar solution.

More generally, we say that a solution $u$ of \eqref{nheq} is asymptotically self-similar if
\begin{equation} \label{aselfsol}%
\lim_{\lambda\to\infty} \Gamma_\lambda^{2/\alpha} u = U,
\end{equation}
in some appropriate sense, and that $U$ is also a solution to \eqref{nheq}. If so, the limit is necessarily a
self-similar solution. See Section 3 of \cite{CDWMult} for a discussion of several equivalent definitions of asymptotically self-similar solutions.
Formally, if we put $t=0$ in \eqref{aselfsol}, we obtain that
\begin{equation} \label{initcv}%
       \lim_{\lambda\to\infty} D_\lambda^{2/\alpha} u_0 = \varphi,
\end{equation}
where $\varphi=U(0)$ is homogeneous of degree $-2/\alpha$.
In the Section  \ref{criticalsec} we study the long time asymptotic behavior of solutions to
\eqref{nheq} with initial values in $\X$ in the case $\alpha = 2/(\gamma + m)$.  The first result shows that
\eqref{initcv} implies \eqref{aselfsol}.

\begin{theorem}
\label{inihomasysol}
Let   $m\in \{1,2,\cdots,N\}$, $0<\gamma<N$ and $\psi \in \B$. Let $\alpha>0$ be such that
\[
          \alpha=\frac{2}{\gamma+m}.
\]
Suppose that there exists $\varphi \in \B$ such that
$\displaystyle \lim_{\lambda \to \infty} D_\lambda^{\gamma+m} \psi =
\varphi$ in $\Bs$. It follows that $\varphi$ is homogeneous of
degree $-(\gamma+m)$ and that the solution $u(t)=\S_m(t)
\psi$ is asymptotically self-similar to the
self-similar solution $U(t)=\S_m(t) \varphi$.
\end{theorem}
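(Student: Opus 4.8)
The plan is to reduce everything to the scaling identity \eqref{invpropsmgtime} together with the continuous dependence result, Theorem~\ref{depcont}. Since we are in the critical case $\alpha=2/(\gamma+m)$, the exponent $2/\alpha$ equals $\gamma+m$, so by \eqref{dilisom} the dilation $D_\lambda^{\gamma+m}$ is an isometry of $\X$ and in particular maps $\B$ into itself. Hence for every $\lambda>0$ the element $D_\lambda^{\gamma+m}\psi$ lies in $\B$ (and $\varphi\in\B$ by hypothesis), so all the objects below live in the compact metric space $\Bs$.

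First I would establish homogeneity of $\varphi$. The dilation operators form a one-parameter group, $D_\mu^{\gamma+m}D_\lambda^{\gamma+m}=D_{\mu\lambda}^{\gamma+m}$, and each $D_\mu^{\gamma+m}$ is continuous for the weak$^\star$ topology of $\X$: under the isometry $\psi\mapsto\rho_m\psi$ identifying $\X$ with $L^\infty(\Omega_m)$, the operator $D_\mu^{\gamma+m}$ becomes the plain dilation $\phi\mapsto\phi(\mu\cdot)$, which is the adjoint of $g\mapsto\mu^{-N}g(\cdot/\mu)$ on $L^1(\Omega_m)$ and hence weak$^\star$ continuous. Applying $D_\mu^{\gamma+m}$ to the convergence $D_\lambda^{\gamma+m}\psi\to\varphi$ and using the group law gives $D_\mu^{\gamma+m}\varphi=\lim_{\lambda\to\infty}D_{\mu\lambda}^{\gamma+m}\psi=\varphi$ for every $\mu>0$; that is, $\varphi$ is homogeneous of degree $-(\gamma+m)=-2/\alpha$.

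Next I would exploit the scaling identity. Writing $u(t)=\S_m(t)\psi$ and specializing \eqref{invpropsmgtime} to $\sigma=2/\alpha=\gamma+m$ yields, for each $t>0$,
\[
\Gamma_\lambda^{\gamma+m}u(t)=\S_m(t)\bigl[D_\lambda^{\gamma+m}\psi\bigr].
\]
Since $D_\lambda^{\gamma+m}\psi\to\varphi$ in $\Bs$ as $\lambda\to\infty$ and $\S_m(t)\colon\Bs\to C_0(\Omega_m)$ is continuous by Theorem~\ref{depcont}, I pass to the limit to obtain
\[
\lim_{\lambda\to\infty}\Gamma_\lambda^{\gamma+m}u(t)=\S_m(t)\varphi=U(t)\qquad\text{in }C_0(\Omega_m),
\]
for every fixed $t>0$. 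This is exactly the asymptotic self-similarity \eqref{aselfsol}, the ``appropriate sense'' being convergence in $C_0(\Omega_m)$ for each $t>0$. Finally, because $\varphi$ is homogeneous of degree $-2/\alpha$, the same identity gives $\Gamma_\lambda^{2/\alpha}U=\S_m(\cdot)[D_\lambda^{2/\alpha}\varphi]=\S_m(\cdot)\varphi=U$, so $U$ is genuinely self-similar.

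The genuine difficulty of the whole argument has been front-loaded into Theorem~\ref{depcont}: once continuous dependence on the initial value in the weak$^\star$ topology is available, this statement is essentially a corollary of the scaling covariance of the semiflow. Within the present proof the only point demanding care is the weak$^\star$ continuity of the dilations used for homogeneity; this is routine but must be recorded, since weak$^\star$ limits do not in general commute with bounded operators unless those operators are themselves weak$^\star$ continuous.
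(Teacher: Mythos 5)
Your proposal is correct and follows essentially the same route as the paper: homogeneity of $\varphi$ from the group law for the dilations, then the scaling identity \eqref{invpropsmgtime} combined with the continuity in Theorem \ref{depcont} to pass to the limit, and finally self-similarity of $U$ from the homogeneity of $\varphi$. The only differences are minor: the paper deduces homogeneity by passing to limits in $\Dd$ rather than by your (also valid) weak$^\star$ continuity argument for $D_\mu^{\gamma+m}$, and it upgrades the convergence to $C([\tau,t];C_0(\Omega_m))$ uniformly on compact time intervals via Theorem \ref{exist}(iii), whereas you record only fixed-$t$ convergence in $C_0(\Omega_m)$, which is an admissible reading of \eqref{aselfsol}.
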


As is by now well established \cite{CDWL,CDW,CDWMult}, the notion of asymptotically self-similar solution can be naturally
extended by allowing different limits in
 \eqref{initcv} and \eqref{aselfsol} along
different sequences $(\lambda_n)_{n\ge 0}$, with $\lambda_n \to \infty$.
The next step in our analysis it to generalize Theorem~\ref{inihomasysol} in this fashion.
To accomplish this, for $u_0 \in \X$ and $M\ge \|u_0\|_{\X}$, we consider the set of all accumulation points of
$D_{\lambda}^{\gamma+m} u_0 $, as $\lambda \to \infty$, given by
\begin{equation}
\label{Omg} %
 \mathcal{Z}^\gamma (u_0) = \left\{z \in \B;\ \exists\ \lambda_n \to \infty \ \text{such that}\
         \lim_{n\to \infty} D_{\lambda_n}^{\gamma+m} u_0 = z \ \text{in}\
         \Bs \right\} .
\end{equation}
Since $\Bs$ is a compact metric
space,  $ \mathcal{Z}^{\gamma}(u_0)$ is nonempty compact subset, for all $u_0\in \X$, and independent of $M\ge \|u_0\|_{\X}$ by \cite[Proposition 3.1, p. 356]{MTW}. In particular, if $u_0$ is homogeneous of degree $-(\gamma+m)$,
then $ \mathcal{Z}^{\gamma}(u_0)=\{ u_0\}$.
We set $u(t)=\S_m(t) u_0$ and we also define the omega-limit set of all accumulation points of
$\Gamma_{\sqrt t}^{\gamma+m} u(1, \cdot)=t^{\frac{\gamma+m}{2}} u(t,\sqrt t \; \cdot)$, as $t\to \infty$, by
\begin{equation} \label{omg}%
 \mathcal{Q}^\gamma(u_0) = \left\{f \in C_0(\Omega_m); \exists t_n \to \infty \
\text{such that} \lim_{n\to \infty}\|\Gamma_{\sqrt t_n}^{\gamma+m}S_m(1)u_0 - f \|_{L^\infty(\Omega_m)}=0
\right\} .
\end{equation}
The relation \eqref{invpropsmgtime} and Theorem \ref{depcont} are the essential elements needed to investigate the relationship between $\mathcal{Q}^{\gamma}(u_0)$ and $\mathcal{Z}^{\gamma}(u_0)$, which is given by our
next main result.

\begin{theorem}
\label{relomg}
Let $m\in \{1,2,\cdots,N\}$,  $0 <\gamma< N$ and let $\alpha>0$ be such that
\[
          \alpha=\frac{2}{\gamma+m}.
\]
 If $u_0\in \X$, then
\[
        \mathcal{Q}^\gamma (u_0) = \mathcal S_m(1)  \mathcal{Z}^\gamma (u_0) .
\]
In particular, $\mathcal{Q}^\gamma (u_0) \subset \mathcal S_m(1)\Bs$ and is therefore
a compact subset of $C_0(\Omega_m)$.
\end{theorem}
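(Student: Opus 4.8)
The goal is to show the set equality
\[
\mathcal{Q}^\gamma(u_0) = \mathcal{S}_m(1)\,\mathcal{Z}^\gamma(u_0),
\]
and I would prove it by establishing the two inclusions separately, using the scaling identity \eqref{invpropsmgtime} and the continuity of $\mathcal{S}_m(1)$ on $\Bs$ (Theorem \ref{depcont}) as the two engines. The first observation is a reformulation of the defining scaling. By \eqref{invpropsmgtime} with $\sigma = 2/\alpha = \gamma+m$, for each $\lambda>0$ we have $\Gamma_\lambda^{\gamma+m}[\mathcal{S}_m(\cdot)u_0] = \mathcal{S}_m(\cdot)[D_\lambda^{\gamma+m}u_0]$. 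Evaluating at $t=1$ gives the pointwise-in-$\lambda$ identity
\[
\Gamma_\lambda^{\gamma+m}\mathcal{S}_m(1)u_0 = \mathcal{S}_m(1)\bigl(D_\lambda^{\gamma+m}u_0\bigr).
\]
This is the crux: the time-rescaled solution at time $1$ equals the flow applied to the space-rescaled initial datum. So the limit $t_n\to\infty$ defining $\mathcal{Q}^\gamma(u_0)$, with $\lambda_n = \sqrt{t_n}$, is exactly the limit of $\mathcal{S}_m(1)(D_{\lambda_n}^{\gamma+m}u_0)$.

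\textbf{Inclusion $\mathcal{Q}^\gamma(u_0)\subset \mathcal{S}_m(1)\mathcal{Z}^\gamma(u_0)$.} Take $f\in\mathcal{Q}^\gamma(u_0)$, so there is a sequence $t_n\to\infty$ with $\Gamma_{\sqrt{t_n}}^{\gamma+m}\mathcal{S}_m(1)u_0 \to f$ in $L^\infty(\Omega_m)$; set $\lambda_n=\sqrt{t_n}\to\infty$. Since $\|D_{\lambda_n}^{\gamma+m}u_0\|_{\X} = \|u_0\|_{\X}\le M$ by \eqref{dilisom}, the sequence $(D_{\lambda_n}^{\gamma+m}u_0)$ lives in the compact metric space $\Bs$; passing to a subsequence (not relabeled) we get $D_{\lambda_n}^{\gamma+m}u_0\to z$ in $\Bs$ for some $z\in\mathcal{Z}^\gamma(u_0)$. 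Applying Theorem \ref{depcont}, $\mathcal{S}_m(1)(D_{\lambda_n}^{\gamma+m}u_0)\to \mathcal{S}_m(1)z$ in $C_0(\Omega_m)$, hence in $L^\infty(\Omega_m)$. By the reformulated identity above, the left side is $\Gamma_{\lambda_n}^{\gamma+m}\mathcal{S}_m(1)u_0$, which converges to $f$. Uniqueness of limits forces $f=\mathcal{S}_m(1)z\in\mathcal{S}_m(1)\mathcal{Z}^\gamma(u_0)$.

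\textbf{Inclusion $\mathcal{S}_m(1)\mathcal{Z}^\gamma(u_0)\subset \mathcal{Q}^\gamma(u_0)$.} Conversely, take $z\in\mathcal{Z}^\gamma(u_0)$, so there is $\lambda_n\to\infty$ with $D_{\lambda_n}^{\gamma+m}u_0\to z$ in $\Bs$. By Theorem \ref{depcont}, $\mathcal{S}_m(1)(D_{\lambda_n}^{\gamma+m}u_0)\to \mathcal{S}_m(1)z$ in $C_0(\Omega_m)$, hence in $L^\infty(\Omega_m)$. Using the identity again, $\mathcal{S}_m(1)(D_{\lambda_n}^{\gamma+m}u_0)=\Gamma_{\lambda_n}^{\gamma+m}\mathcal{S}_m(1)u_0$; setting $t_n=\lambda_n^2\to\infty$ exhibits $\mathcal{S}_m(1)z$ as a limit of the form in \eqref{omg}, so $\mathcal{S}_m(1)z\in\mathcal{Q}^\gamma(u_0)$. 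The final assertion then follows: $\mathcal{Z}^\gamma(u_0)$ is a compact subset of $\Bs$ and $\mathcal{S}_m(1)$ is continuous $\Bs\to C_0(\Omega_m)$, so $\mathcal{S}_m(1)\mathcal{Z}^\gamma(u_0)$ is the continuous image of a compact set, hence compact in $C_0(\Omega_m)$; and it is contained in $\mathcal{S}_m(1)\Bs$ as claimed.

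I expect the main (and essentially the only nontrivial) obstacle to be the careful matching of topologies in the first inclusion: the accumulation points $z$ are extracted in the \emph{weak$^\star$} topology of $\Bs$, whereas convergence of the $f$'s happens in the \emph{strong} $L^\infty$ topology on $C_0(\Omega_m)$. The bridge between these two is precisely the continuity statement of Theorem \ref{depcont}, namely that $\mathcal{S}_m(1)$ sends weak$^\star$-convergent sequences in $\B$ to $L^\infty$-convergent sequences in $C_0(\Omega_m)$; this smoothing-via-the-flow is what makes the argument work and is the reason the definitions of $\mathcal{Z}^\gamma$ and $\mathcal{Q}^\gamma$ use different topologies. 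Everything else is the standard compactness-plus-subsequence bookkeeping, where one must be mildly careful that passing to a subsequence to extract $z$ does not disturb the already-assumed convergence to $f$ (it does not, since a subsequence of a convergent sequence has the same limit).
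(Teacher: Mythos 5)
Your proof is correct and follows essentially the same route as the paper's: both rely on the scaling identity \eqref{invpropsmgtime} at $t=1$ to rewrite $\mathcal{Q}^\gamma(u_0)$ as the set of $C_0(\Omega_m)$-limits of $\mathcal{S}_m(1)D_{\lambda_n}^{\gamma+m}u_0$, then use Theorem \ref{depcont} for both inclusions and compactness of $\Bs$ to extract the accumulation point $z$ in the harder inclusion. The only (harmless) differences are that you make explicit the norm invariance \eqref{dilisom}, the subsequence bookkeeping, and the final compactness assertion, which the paper leaves implicit.
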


The last relation shows that in the case $\alpha = 2/(\gamma + m)$ the complexity in the large time behavior of a solution, as expressed in $\mathcal{Q}^\gamma(u_0)$, is determined by the complexity in the spatial asymptotic behavior of its initial value as expressed in $\mathcal{Z}^\gamma (u_0)$. Furthermore, Theorem \ref{relomg} above is inspired from \cite[Theorem 1.3, p. 83]{CDW} which requires $\alpha\ge2/N$, and we observe that in Theorem \ref{relomg}, if $\gamma+m>N$, then $\alpha<2/N$.  Since $\Bs$ is separable and $\mathcal{Z}^\gamma (u_0)$ can contain any countable subset of $\Bs$, we show that $\mathcal{Z}^\gamma (U_0)=\Bs$ for some choice of $U_0\in\Bs$.

Using \cite[Theorem 1.4, p. 345]{MTW}, we obtain the following result.

\begin{corollary} \label{univsol}
Let $m\in \{1,2,\cdots,N\}$, $0 <\gamma< N$ and  $M>0$.
Let $\alpha>0$ be such that
\[
          \alpha=\frac{2}{\gamma+m}.
\]
 Then, there exists
\[
U_0 \in \B \cap C^\infty(\Omega_m) \cap C_0(\Omega_m)
\]
such that
\[
        \mathcal{Q}^\gamma(U_0) = \mathcal S_m(1) \B .
\]
\end{corollary}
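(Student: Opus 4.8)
The plan is to reduce the statement to the universal initial-value construction of \cite{MTW} for the \emph{linear} flow, and then to feed that construction into Theorem \ref{relomg}. Because $\alpha = 2/(\gamma+m)$, Theorem \ref{relomg} applies and gives $\mathcal{Q}^\gamma(U_0) = \mathcal S_m(1)\,\mathcal{Z}^\gamma(U_0)$ for every $U_0\in\X$. Hence it suffices to produce a single $U_0 \in \B\cap C^\infty(\Omega_m)\cap C_0(\Omega_m)$ whose spatial dilation accumulation set is as large as possible, namely $\mathcal{Z}^\gamma(U_0)=\B$; applying $\mathcal S_m(1)$ to this identity then yields $\mathcal{Q}^\gamma(U_0)=\mathcal S_m(1)\B$, which is the assertion.

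First I would recall that $\Bs$, being a compact metric space, is separable, so I may fix a countable dense subset $\{z_k\}_{k\ge 1}\subset\B$. The construction of a universal $U_0$ is exactly the content of \cite[Theorem 1.4, p.~345]{MTW}: one builds a single function $U_0\in\B\cap C^\infty(\Omega_m)\cap C_0(\Omega_m)$ for which each prescribed $z_k$ is realized as a weak$^\star$ limit $D_{\lambda}^{\gamma+m}U_0 \to z_k$ along a suitable sequence $\lambda\to\infty$, so that $\{z_k\}\subset\mathcal{Z}^\gamma(U_0)$. Morally one glues rescaled smooth approximations of the $z_k$ on widely separated dyadic scales, tuned so that at the scale attached to $z_k$ the dilation sees essentially only the $z_k$-piece while the contributions of the other scales are weak$^\star$-negligible; smoothness and the decay required for $C_0(\Omega_m)$ are arranged by mollification and by choosing amplitudes summing to an $\X$-norm at most $M$. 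Since $\mathcal{Z}^\gamma(U_0)$ is a closed subset of the compact metric space $\Bs$ (as recalled after \eqref{Omg}) and contains the dense set $\{z_k\}$, it follows that $\mathcal{Z}^\gamma(U_0)=\B$.

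Finally, with such a $U_0$ in hand, I would simply invoke Theorem \ref{relomg} to conclude
\[
\mathcal{Q}^\gamma(U_0)=\mathcal S_m(1)\,\mathcal{Z}^\gamma(U_0)=\mathcal S_m(1)\B .
\]
The essential work, and the main obstacle, lies entirely in the universal construction of $U_0$, i.e.\ \cite[Theorem 1.4]{MTW}: the delicate points there are controlling the cumulative $\X$-norm as infinitely many scales are stacked, guaranteeing genuine weak$^\star$ convergence to each target without cross-interference between neighboring scales, and simultaneously securing smoothness and decay. Since that is a statement about the linear heat flow which we may cite, the only genuinely new observation needed here is that the nonlinear map $\mathcal S_m(1)$, though merely continuous from $\Bs$ into $C_0(\Omega_m)$ by Theorem \ref{depcont} and not assumed injective, nonetheless transports the identity $\mathcal{Z}^\gamma(U_0)=\B$ into the desired $\mathcal{Q}^\gamma(U_0)=\mathcal S_m(1)\B$ directly through Theorem \ref{relomg}.
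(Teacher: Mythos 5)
Your proposal is correct and follows exactly the paper's own route: the paper's proof consists precisely of invoking \cite[Theorem 1.4, p.~345]{MTW} for the universal initial value $U_0$ with $\mathcal{Z}^\gamma(U_0)=\B$ and then applying Theorem \ref{relomg} to obtain $\mathcal{Q}^\gamma(U_0)=\mathcal S_m(1)\,\mathcal{Z}^\gamma(U_0)=\mathcal S_m(1)\B$. Your additional sketch of the multi-scale gluing behind the cited construction and the density/closedness argument is a faithful (if not strictly needed) elaboration of what that reference provides.
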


\begin{remark}\label{clslin}
{\rm If $u_0$ belongs to $\X \cap\mathcal{X}_{m,\gamma'}$ with $\gamma< \gamma'< N$, then
$\mathcal{Z}^\gamma (u_0) =\{0\}$.
 In fact, for all $\lambda>0$,
\[
     \left| D_{\lambda}^{\gamma+m} u_0 (x) \right| = \lambda^{\gamma+m} |u_0(\lambda x) | \le C \lambda^{\gamma-\gamma'} |x|^{-(\gamma'+m)} \to 0,
\]
as $\lambda\to \infty$ uniformly on $\{x\in\Omega_m;\; |x|\ge \varepsilon\}$, for all $\varepsilon>0$. Thus, $ \mathcal{Z}^{\gamma}(u_0) =\{0\}$. For example, if $\alpha=2/(\gamma+m)>2/(\gamma'+m)$, the function $\varphi(x)=x_1\cdots x_m |x|^{-\gamma'-2m}1\!\!1_{\{|x|>1\}} \in \mathcal{X}_{m,\gamma'}\cap\mathcal{X}_{m,\gamma} $. It follows, from Theorem \ref{relomg}, that $\mathcal{Q}^{\gamma} (\varphi)=\{0\}$. However, we might have $\mathcal{Q}^{\gamma'} (\varphi)\not=\{0\}.$ }
\end{remark}


In Section \ref{linearsec} of this paper, we consider the case $\alpha>2/(\gamma+m)$.
Since $\alpha \neq 2/(\gamma+m)$ there is a disconnect between the transformations which
preserve the set of solutions to \eqref{nheq}, {\it i.e.} $\Gamma_{\lambda}^{2/\alpha}$,
and those which leave invariant the norm of the space $\X$ where the solutions live,
{\it i.e.} $\Gamma_{\lambda}^{\gamma + m}$.  Indeed, by \eqref{dilsolnew} and \eqref{dilisom}
it follows that for $u_0 \in \X$

\begin{equation}
\label{alphaneqgammaplusm}
\|\Gamma_{\lambda}^{\sigma}\S_m(t)u_0\|_{\X} = \|D_{\lambda}^{\sigma}\S_m(\lambda^2 t)u_0\|_{\X}
= \lambda^{\sigma - (\gamma + m)}\|\S_m(\lambda^2 t)u_0\|_{\X}.
\end{equation}
Since $\|\S_m(\lambda^2 t)u_0\|_{\X} \le C\|u_0\|_{\X}$, for some $C > 0$, by Theorem~\ref{exist}, it follows,
setting  $\sigma = 2/\alpha$ in \eqref{alphaneqgammaplusm}, that
if $2/\alpha < \gamma + m$, then $\|\Gamma_{\lambda}^{2/\alpha}\S_m(t)u_0\|_{\X} \to 0$
as $\lambda \to \infty$,
uniformly for all $u_0$  in a bounded set of $\X$ and all $t > 0$.

It is clear from \eqref{alphaneqgammaplusm} that for $u_0 \in \X$, the transformations most
likely to yield some nontrivial asymptotic behavior are $\Gamma_{\lambda}^{\gamma + m}$.
In other words, we still need to study  $\mathcal{Q}^\gamma(u_0)$ as given by \eqref{omg},
and likewise $ \mathcal{Z}^\gamma (u_0)$ as given by \eqref{Omg}.
However, we cannot expect the relationship between these two objects to be given as in
Theorem~\ref{relomg} since the transformations do not preserve solutions of \eqref{nheq}.

If $u$ is a solution of \eqref{nheq} then
$v = \Gamma_{\lambda}^{\gamma + m}u$ is the solution of the equation
\begin{equation}
\label{nheqlam}%
v_t - \Delta v +  \lambda^{2 - (\gamma + m)\alpha}|v|^\alpha v =0.
\end{equation}
If $\alpha>2/(\gamma+m)$, it follows that as $\lambda \to \infty$, the function $v$ satisfies an equation
which approaches the {\it linear} heat equation.
Hence, we should not be surprised if in this case
$\mathcal{Q}^\gamma(u_0)$ and $ \mathcal{Z}^\gamma (u_0)$ are related by the linear heat equation.
The next theorem makes this idea precise, both in the asymptotically self-similar case, and the
more general case of arbitrary $u_0 \in \X$.  It is analogous to \cite[Lemma 5.1, p. 110]{CDW}.

\begin{theorem} \label{lincase}
Let $m\in\{1,\cdots,N\}$, $0< \gamma<N$ and $M>0$. Let $\alpha$ be such that
\begin{equation} \label{cndlin}
       \alpha> \frac{2}{\gamma+m}.
\end{equation}
We then have the following conclusions.
 \begin{enumerate}
\item [(i)] If $u_0, \varphi \in \B$ is such that
$\displaystyle \lim_{\lambda \to \infty} D_\lambda^{\gamma+m} u_0 =
\varphi$ in $\Bs$, then $\varphi$ is homogeneous of
degree $-(\gamma+m)$ and  $u(t)=\S_m(t) u_0$ is asymptotically self-similar to $U(t)=e^{t\Delta_m} \varphi$.
\item [(ii)] $\mathcal{Q}^{\gamma}(u_0) = e^{\Delta_m}  \mathcal{Z}^\gamma(u_0)$, for all $u_0\in \X$.
\item [(iii)] There exists $U_0 \in \B \cap C^\infty(\Omega_m) \cap C_0(\Omega_m)$, such that $\mathcal{Q}^{\gamma}(U_0) = e^{\Delta_m}  \B$.
\end{enumerate}
\end{theorem}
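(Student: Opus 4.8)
The plan is to mimic the proof of Theorem~\ref{relomg}, replacing the nonlinear semiflow $\S_m(1)$ by the linear propagator $e^{\Delta_m}$; the replacement is legitimate precisely because, under the scaling $\Gamma_\lambda^{\gamma+m}$, the nonlinearity acquires a coefficient that vanishes as $\lambda\to\infty$. Concretely, set $u(t)=\S_m(t)u_0$ and $v_\lambda(t)=\Gamma_\lambda^{\gamma+m}u(t)=D_\lambda^{\gamma+m}[u(\lambda^2 t)]$. Starting from the integral equation \eqref{nheqINTE} for $u$ at time $\lambda^2 t$, applying $D_\lambda^{\gamma+m}$, using the commutation relation \eqref{dilsmg} (with $\sigma=\gamma+m$) and the change of variable $s=\lambda^2\tau$, a direct computation yields the rescaled integral equation
\[
v_\lambda(t)=e^{t\Delta_m}\bigl(D_\lambda^{\gamma+m}u_0\bigr)-\lambda^{2-(\gamma+m)\alpha}\int_0^t e^{(t-\tau)\Delta_m}\bigl(|v_\lambda(\tau)|^\alpha v_\lambda(\tau)\bigr)\,d\tau,
\]
consistent with $v_\lambda$ solving \eqref{nheqlam}. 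Denote the second term by $N_\lambda(t)$. Since \eqref{cndlin} gives $2-(\gamma+m)\alpha<0$, the prefactor tends to $0$ as $\lambda\to\infty$.

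The technical heart is to show that $\|N_\lambda(1)\|_{L^\infty(\Omega_m)}\to 0$ as $\lambda\to\infty$, uniformly for $u_0$ in a bounded subset of $\X$. First I would record the uniform-in-$\lambda$ bounds on $v_\lambda$: by Theorem~\ref{exist}(ii) and \eqref{dilisom}, $\|v_\lambda(\tau)\|_{\X}=\|\S_m(\lambda^2\tau)u_0\|_{\X}\le C\|u_0\|_{\X}$, and Theorem~\ref{exist}(iv) combined with the scaling gives the $\lambda$-independent pointwise bound $|v_\lambda(\tau,x)|\le C\,x_1\cdots x_m(\tau+|x|^2)^{-(\gamma+2m)/2}\|u_0\|_{\X}$, hence $\|v_\lambda(\tau)\|_{L^\infty}\le C\tau^{-(\gamma+m)/2}\|u_0\|_{\X}$. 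I would then bound $\int_0^1 e^{(1-\tau)\Delta_m}(|v_\lambda(\tau)|^\alpha v_\lambda(\tau))\,d\tau$ in $L^\infty$ by a $\lambda$-independent constant, splitting the $\tau$-integral and invoking the smoothing of the sectorial heat kernel \eqref{hksec} to absorb both the spatial singularity of the nonlinearity near the origin and its growth as $\tau\to 0$; the finiteness is underpinned by Theorem~\ref{exist}(v), which guarantees $s\mapsto e^{(t-s)\Delta_m}(|u(s)|^\alpha u(s))$ is integrable, rescaled appropriately. Multiplying this uniform bound by the vanishing prefactor gives $N_\lambda(1)\to 0$. Equivalently, since $N_\lambda(1)=e^{\Delta_m}(D_\lambda^{\gamma+m}u_0)-v_\lambda(1)=\Gamma_\lambda^{\gamma+m}G(1)$ with $G(t)=e^{t\Delta_m}u_0-u(t)$, one has $\|N_\lambda(1)\|_{L^\infty}=t^{(\gamma+m)/2}\|G(t)\|_{L^\infty}$ with $t=\lambda^2$, so the assertion is the decay statement $t^{(\gamma+m)/2}\|G(t)\|_{L^\infty(\Omega_m)}\to 0$, which is the precise analogue of \cite[Lemma~5.1, p.~110]{CDW}. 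I \emph{expect this estimate to be the main obstacle}.

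Granting the vanishing of $N_\lambda$, the three conclusions follow quickly. For (i), if $D_\lambda^{\gamma+m}u_0\to\varphi$ in $\Bs$ then the first term $e^{\Delta_m}(D_\lambda^{\gamma+m}u_0)\to e^{\Delta_m}\varphi$ in $C_0(\Omega_m)$ by continuity of the linear semigroup $\Bs\to C_0(\Omega_m)$ (the linear analogue of Theorem~\ref{depcont}, from \cite{MTW}), whence $v_\lambda(1)\to e^{\Delta_m}\varphi$; the same argument at an arbitrary time $t>0$ gives $v_\lambda(t)\to e^{t\Delta_m}\varphi$, i.e. $\Gamma_\lambda^{\gamma+m}u\to U$ with $U(t)=e^{t\Delta_m}\varphi$. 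That $\varphi$ is homogeneous of degree $-(\gamma+m)$ follows exactly as in Theorem~\ref{inihomasysol}, from its being a limit of the dilations $D_\lambda^{\gamma+m}u_0$, and then $U$ is the self-similar solution of the linear heat equation on $\Omega_m$ with profile $\varphi$.

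For (ii) I would run the double-inclusion argument of Theorem~\ref{relomg}. Given $z\in\mathcal{Z}^\gamma(u_0)$ with $D_{\lambda_n}^{\gamma+m}u_0\to z$, set $t_n=\lambda_n^2$; by the above $\Gamma_{\sqrt{t_n}}^{\gamma+m}\S_m(1)u_0=v_{\lambda_n}(1)\to e^{\Delta_m}z$, so $e^{\Delta_m}z\in\mathcal{Q}^\gamma(u_0)$ by \eqref{omg}, giving $e^{\Delta_m}\mathcal{Z}^\gamma(u_0)\subset\mathcal{Q}^\gamma(u_0)$. Conversely, if $f\in\mathcal{Q}^\gamma(u_0)$ is the limit of $v_{\lambda_n}(1)$ along some $\lambda_n\to\infty$, compactness of $\Bs$ lets me pass to a subsequence with $D_{\lambda_n}^{\gamma+m}u_0\to z\in\mathcal{Z}^\gamma(u_0)$ as in \eqref{Omg}, and then $v_{\lambda_n}(1)\to e^{\Delta_m}z$ forces $f=e^{\Delta_m}z$, proving the reverse inclusion and hence the equality. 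Finally, for (iii) I would invoke \cite[Theorem~1.4, p.~345]{MTW} to produce $U_0\in\B\cap C^\infty(\Omega_m)\cap C_0(\Omega_m)$ with $\mathcal{Z}^\gamma(U_0)=\B$; this is the same datum used in Corollary~\ref{univsol} and depends only on the dilation structure, not on $\alpha$, so applying part~(ii) gives $\mathcal{Q}^\gamma(U_0)=e^{\Delta_m}\mathcal{Z}^\gamma(U_0)=e^{\Delta_m}\B$.
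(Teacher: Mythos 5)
Your reduction is structurally the same as the paper's: everything hinges on the single decay statement $t^{(\gamma+m)/2}\|u(t)-e^{t\Delta_m}u_0\|_{L^\infty(\Omega_m)}\to 0$ as $t\to\infty$ (this is precisely the paper's Proposition~\ref{asymlin}), and once that is known, conclusions (i), (ii), (iii) follow from the linear theory of \cite{MTW} exactly as you describe (the paper simply quotes Corollary 4.2, Corollary 1.3 and Theorem 1.4 of \cite{MTW}; your double-inclusion argument for (ii) is the argument behind those citations). You correctly flag this estimate as the main obstacle; the problem is that your proposed proof of it fails.

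You claim that $\int_0^1 e^{(1-\tau)\Delta_m}\bigl(|v_\lambda(\tau)|^\alpha v_\lambda(\tau)\bigr)\,d\tau$ admits an $L^\infty(\Omega_m)$ bound independent of $\lambda$, so that multiplying by the vanishing prefactor $\lambda^{2-(\gamma+m)\alpha}$ concludes. No such uniform bound exists once $\alpha\ge(N+2-\gamma)/(\gamma+m)$, a range permitted by \eqref{cndlin} since $\gamma<N$. Indeed, take $u_0=\psi_0$, positive and homogeneous, so that $D_\lambda^{\gamma+m}$ fixes both $\psi_0$ and $e^{\tau\Delta_m}\psi_0$. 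Since the theorem is true, $v_\lambda(\tau)\to e^{\tau\Delta_m}\psi_0$ pointwise for each $\tau>0$, and Fatou's lemma gives
\[
\liminf_{\lambda\to\infty}\int_0^1 e^{(1-\tau)\Delta_m}\bigl(|v_\lambda(\tau)|^{\alpha+1}\bigr)(x)\,d\tau\ \ge\ \int_0^1 e^{(1-\tau)\Delta_m}\bigl((e^{\tau\Delta_m}\psi_0)^{\alpha+1}\bigr)(x)\,d\tau .
\]
The right-hand side is infinite in that range: writing $e^{\tau\Delta_m}\psi_0=\tau^{-(\gamma+m)/2}f(\cdot/\sqrt\tau)$ with $f=e^{\Delta_m}\psi_0$, restricting the $y$-integral to $\sqrt\tau\le|y|\le2\sqrt\tau$, and using the lower bounds $K_{1-\tau}(x,y)\ge c(x)\,y_1\cdots y_m$ (from \eqref{kernelest}) and $f(z)\ge c\,z_1\cdots z_m$ on $\{1\le|z|\le2\}$ (from the proof of Proposition~\ref{timepos}), one finds that the inner term is bounded below by $c(x)\,\tau^{[N+m-(\gamma+m)(\alpha+1)]/2}$, which is not integrable near $\tau=0$ when $(\gamma+m)(\alpha+1)\ge N+m+2$. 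So the rescaled Duhamel integral genuinely diverges as $\lambda\to\infty$; only its product with $\lambda^{2-(\gamma+m)\alpha}$ tends to zero, and your argument supplies no mechanism for that cancellation. Your stated uniform bounds on $v_\lambda$ cannot repair this: $\|v_\lambda(\tau)\|_{L^\infty}\le C\tau^{-(\gamma+m)/2}$ yields $|v_\lambda(\tau)|^{\alpha+1}\le C\tau^{-\alpha(\gamma+m)/2}|v_\lambda(\tau)|$ with $\alpha(\gamma+m)/2>1$, again non-integrable at $\tau=0$.

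The missing ingredient --- and what the paper actually uses --- is an interpolation with the absorption bound \eqref{upperbd}, $|u(s)|\le(\alpha s)^{-1/\alpha}$, which your sketch never invokes. Writing $|u(s)|^{\alpha+1}=|u(s)|^{\alpha(1-\varepsilon)}|u(s)|^{1+\alpha\varepsilon}$, estimating the first factor by $Cs^{-(1-\varepsilon)}$ and the second via Theorem~\ref{exist}(iv), one obtains (estimate \eqref{unalpha}) $|u(s)|^{\alpha+1}\le Cs^{-(1-\varepsilon)}x_1\cdots x_m(s+|x|^2)^{-(\gamma'+2m)/2}$ with $\gamma'=\gamma+\alpha\varepsilon(\gamma+m)<N$ for small $\varepsilon$; the excess power of $|u|$ is thus converted into the integrable factor $s^{-(1-\varepsilon)}$ while the spatial profile stays admissible for Corollary~\ref{esttaunew}. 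Applying $D_{\sqrt t}^{\gamma+m}$, Corollary~\ref{esttaunew}, and integrating gives $t^{(\gamma+m)/2}\|u(t)-e^{t\Delta_m}u_0\|_{L^\infty}\le Ct^{\varepsilon(1-\alpha(\gamma+m)/2)}\to0$. In your rescaled picture this says the integral grows like $\lambda^{(1-\varepsilon)[(\gamma+m)\alpha-2]}$ against a prefactor $\lambda^{-[(\gamma+m)\alpha-2]}$: the decay of $N_\lambda(1)$ is a near-cancellation with margin $\varepsilon$, not a bounded integral times a vanishing coefficient.
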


In Section \ref{nonlinsec} of this paper, we consider the case $\alpha<2/(\gamma+m)$.
As in the case of Theorem~\ref{lincase}, the transformations which leave solutions invariant, {\it i.e.}
$\Gamma_{\lambda}^{2/\alpha}$, do not leave invariant the norm of  $\X$, which is the space where
the solution lives.  Nonetheless, unlike in the case $\alpha>2/(\gamma+m)$, the transformations
$\Gamma_{\lambda}^{2/\alpha}$ reveal nontrivial asymptotic behavior.  Because of
\eqref{alphaneqgammaplusm}, to study this asymptotic behavior, we need to leave the context
of the space $\X$.

This is best illustrated by the result of Gmira and V\'eron \cite{GVL} in the case
of $\R^N$. If we express the upper bound \eqref{upperbd} in terms more suggestive of the long-time asymptotic behavior of the solution, we see that,
considering only positive solutions,
\begin{equation}
\label{upperbdbis}
(\Gamma^{2/\alpha}_{\sqrt t}u)(1,x) = t^{\frac{1}{\alpha}}u(t,x\sqrt t)\le \left(\frac{1}{\alpha}\right)^{\frac{1}{\alpha}}.
\end{equation}
The main result of \cite{GVL} can be stated as follows. Suppose $\alpha < \frac{2}{N}$. Let $u_0 \in L^q(\R^N)$ for some $1 \le q < \infty$, or $C_0(\R^N)$, with $u_0 \ge 0$, be such that for every $k > 0$, there exists $R_0 > 0$ such that
\begin{equation}
\label{GVhyp}
u_0(x) \ge k|x|^{-2/\alpha}, \; |x| \ge R_0,
\end{equation}
{\it i.e.} $\liminf_{|x| \to \infty}|x|^{2/\alpha}u_0(x) = \infty$.  It follows that if $u(t,x)$ is the resulting
solution of \eqref{nheq}, then
\begin{equation}
\label{GVconcl}
 t^{\frac{1}{\alpha}}u(t,x\sqrt t)\to \left(\frac{1}{\alpha}\right)^{\frac{1}{\alpha}}
\end{equation}
uniformly on compact subsets of $\R^N$.  In light of the upperbound \eqref{upperbdbis}, the result
\eqref{GVconcl} is rather sharp.

In the case of the sector $\Omega_m$, we have the following result, where
$C_0^{b,u}(\Omega_m)$ denotes the space of bounded uniformly continuous
functions on $\Omega_m$ which are zero on $\partial\Omega_m$.

\begin{theorem} \label{nonlincaseCptsector}
Let  $m \in \{1, \cdots, N\},$ $0<\gamma<N$ and $\alpha>0$ be such that
\begin{equation}
\label{cndnonlin3}
         \alpha < \frac{2}{\gamma+m} .
\end{equation}
Let $u_0\in\X$ with $u_0 \ge 0$, and let $u(t)=\mathcal{S}_m(t) u_0$
be the resulting solution of \eqref{nheq} as given by Theorem~\ref{exist}.  Suppose that there exist $R_0 > 0$ and $c_0 > 0$ such that
\begin{equation}
\label{condGV}
    u_0(x) \ge c_0\psi_0(x), \; x \in \Omega_m, |x| \ge R_0,
\end{equation}
where $\psi_0$ is given by \eqref{psi0}.
Then
\begin{equation}
\label{gvCompactsector}
\lim_{t\to\infty} t^{\frac{1}{\alpha}}u(t,x\sqrt t)= g(x),
\end{equation}
uniformly on compact subsets of $\overline\Omega_m$,
where $g \in C_0^{b,u}(\Omega_m)$
 is the profile of the  self-similar  solution of \eqref{nheq} given
by  Proposition~\ref{bigselsim}.
\end{theorem}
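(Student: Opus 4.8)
The plan is to follow the Gmira–V\'eron strategy, transported to the sector $\Omega_m$ and organized around the space–time dilations $\Gamma_\lambda^{2/\alpha}$. Writing $\lambda=\sqrt t$, the quantity to be analyzed is
\[
u_\lambda(1,x):=(\Gamma_\lambda^{2/\alpha}u)(1,x)=\lambda^{2/\alpha}u(\lambda^2,\lambda x)=t^{1/\alpha}u(t,x\sqrt t),
\]
and by \eqref{invpropsmgtime} each $u_\lambda=\Gamma_\lambda^{2/\alpha}u$ is itself the solution $\S_m(\cdot)\,[D_\lambda^{2/\alpha}u_0]$ of \eqref{nheq} on $\Omega_m$. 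The goal is thus to show $u_\lambda(1,\cdot)\to g$ locally uniformly on $\overline\Omega_m$ as $\lambda\to\infty$, which I would obtain by squeezing $u_\lambda(1,\cdot)$ between a sharp upper bound and a family of subsolutions built from the hypothesis \eqref{condGV}.

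For the upper bound I would use the self-similar solution $U(t,x)=t^{-1/\alpha}g(x/\sqrt t)$ furnished by Proposition~\ref{bigselsim}, which is the maximal solution of \eqref{nheq} on $\Omega_m$ and is invariant under $\Gamma_\lambda^{2/\alpha}$. Maximality gives $u\le U$, and applying $\Gamma_\lambda^{2/\alpha}$ yields $u_\lambda\le\Gamma_\lambda^{2/\alpha}U=U$; evaluating at $t=1$ gives the pointwise bound $u_\lambda(1,x)\le U(1,x)=g(x)$ for every $\lambda>0$. Note that the cruder universal bound \eqref{upperbd}, which gives only $u_\lambda(1,\cdot)\le(1/\alpha)^{1/\alpha}$, is not sharp near $\partial\Omega_m$ and does not suffice here.

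For the lower bound I would exploit that \eqref{condGV} becomes an ever stronger condition after rescaling. Since $\psi_0$ is homogeneous of degree $-(\gamma+m)$ and $2/\alpha>\gamma+m$, for $|x|\ge R_0/\lambda$ one has
\[
D_\lambda^{2/\alpha}u_0(x)=\lambda^{2/\alpha}u_0(\lambda x)\ge c_0\lambda^{2/\alpha-(\gamma+m)}\psi_0(x)=:c_\lambda\psi_0(x),\qquad c_\lambda\to\infty.
\]
Hence, for any fixed $c>0$ and $\varepsilon>0$, once $\lambda$ is large enough that $c_\lambda\ge c$ and $R_0/\lambda\le\varepsilon$, the initial data of $u_\lambda$ dominates $c\,\psi_0\,\mathbf{1}_{\{|x|\ge\varepsilon\}}$. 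Let $W_{c,\varepsilon}$ be the solution of \eqref{nheq} with this bounded, nonnegative initial value; the monotonicity of the semiflow with respect to nonnegative data (Theorem~\ref{exist}(vi)) then gives $u_\lambda(t)\ge W_{c,\varepsilon}(t)$ for all such $\lambda$, and in particular $u_\lambda(1,\cdot)\ge W_{c,\varepsilon}(1,\cdot)$. The functions $W_{c,\varepsilon}$ increase as $c\uparrow\infty$ and $\varepsilon\downarrow 0$ while staying below $U$, and the key step is to identify their limit: the family $W_{c,\varepsilon}$ converges to the maximal solution $U$, so that $W_{c,\varepsilon}(1,\cdot)\uparrow g$ pointwise.

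Combining the two bounds gives $W_{c,\varepsilon}(1,x)\le u_\lambda(1,x)\le g(x)$ for all $x\in\Omega_m$ and all large $\lambda$. Since $g\in C_0^{b,u}(\Omega_m)$ is continuous and $W_{c,\varepsilon}(1,\cdot)\uparrow g$, Dini's theorem upgrades this convergence to be uniform on compact subsets of $\overline\Omega_m$, and a two-sided squeeze then yields $u_\lambda(1,\cdot)\to g$ uniformly on compacts, which is \eqref{gvCompactsector}. I expect the main obstacle to be the lower-bound identification: proving that the solutions $W_{c,\varepsilon}$, whose data blow up to $+\infty$ throughout the interior of $\Omega_m$, actually converge to the maximal self-similar solution $U$ rather than to some smaller solution. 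This is precisely where the construction and the maximality/uniqueness properties of $U$ from Proposition~\ref{bigselsim} must be invoked, together with the comparison principle, to rule out a loss at the boundary and to pin the limit down as $g$.
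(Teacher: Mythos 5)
Your skeleton is the same as the paper's: the Kamin--Peletier rescaling $u_\lambda=\Gamma_\lambda^{2/\alpha}u$, squeezed between sub-barriers built from \eqref{condGV} and the self-similar solution of Proposition~\ref{bigselsim}. Two of your choices are genuine (and attractive) departures. First, by rescaling the hypothesis directly, $D_\lambda^{2/\alpha}u_0\ge c_0\lambda^{2/\alpha-(\gamma+m)}\psi_0$ on $\{|x|\ge R_0/\lambda\}$ with exploding constant, you dominate $c\,\psi_0\mathbf{1}_{\{|x|\ge\varepsilon\}}$ for large $\lambda$ and thereby bypass Proposition~\ref{timepos} entirely, whereas the paper needs that (long) positivity-propagation argument precisely to push the lower bound to the boundary and origin before reducing to the special datum $c\,x_1\cdots x_m\min[1,|x|^{-\gamma-2m}]$; this comparison step of yours is correct, via Theorem~\ref{exist}(vi), since both data lie in $\X$. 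Second, your Dini-theorem argument is a legitimate replacement for the paper's parabolic-regularity compactness step for upgrading pointwise monotone convergence of the barriers to locally uniform convergence.

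However, there is a genuine gap, and it sits exactly where you flag ``the main obstacle'': no maximality or uniqueness property of the self-similar solution is available to be ``invoked.'' Proposition~\ref{bigselsim} only characterizes $V$ (your $U$) as $\lim_{\lambda\to\infty}\Gamma_\lambda^{2/\alpha}v$ for the specific constant datum $h\equiv 1$; it does not assert that $V$ dominates all solutions, and your barriers $W_{c,\varepsilon}$ are not a dilation family, so \eqref{selsimlim} cannot be applied to them. Concretely, two of your claims are unproved. (a) The upper bound ``maximality gives $u\le U$'': constants are not in $\X$ and $V(0)\notin\X$, so Theorem~\ref{exist}(vi) does not apply; one must instead use \eqref{upperbd} to get $u_\lambda(t+\epsilon)\le(\alpha\epsilon)^{-1/\alpha}$, compare in the $L^\infty$ framework of Remark~\ref{existLinf} with the constant-data solutions (which lie below $V$ by its construction), and let $\epsilon\to0$. (b) The key identification $W_{c,\varepsilon}\to V$: the paper's device, which works verbatim for your barriers, is to truncate the data at a constant level $A$, observe that the truncated data increase pointwise to the constant $A$ as $c\uparrow\infty$, $\varepsilon\downarrow0$, pass to the limit in the integral equation \eqref{nheqINTE} by monotone convergence so that the limit is identified (via uniqueness for bounded data, Remark~\ref{existLinf}) as the solution $Z_A$ with constant datum $A$, and finally use that $Z_A\uparrow V$ as $A\to\infty$ by the very construction of $V$ in Proposition~\ref{bigselsim}. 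Without this truncation-and-monotone-convergence argument (or an equivalent), the central step of your proof is asserted rather than proved; with it, your proposal becomes a complete proof that is arguably cleaner than the paper's, since it dispenses with Proposition~\ref{timepos}.
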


\begin{remark}
\label{limitcompactsubsets}
 The condition \eqref{condGV} implies that, for any $c>0,$ $$\lim_{|x|\to \infty,\; x_1\cdots x_m|x|^{-m}\geq c}|x|^{2/\alpha}u_0(x)=\infty,$$ since $2/\alpha > \gamma+m.$
\end{remark}

\begin{remark}
\label{explicitestimateofg} Using \eqref{gbeh} below and \eqref{hksec}, we have that $g$ in \eqref{gvCompactsector} satisfies the explicit bound
$$ \alpha^{-1/\alpha}I_m(1,x)\leq g(x)\leq (\alpha \epsilon)^{-1/\alpha}I_m\big((1-\epsilon),x\big),\;  x\in \Omega_m,$$
for all $0<\varepsilon<1,$
where
$$I_m(\delta,x)=\prod_{i=1}^m\left({1\over \sqrt{\pi}}\int_{-{x_i\over 2\sqrt{\delta}}}^{{x_i\over 2\sqrt{\delta}}}e^{-y^2}dy\right).$$
\end{remark}

 In the Section \ref{extensec} of this paper,  we reinterpret the
results of the previous sections on  the global well-posedness and   the asymptotic behavior of $\S_m(t) u_0$, $u_0 \in \X$,
in the case of antisymmetric functions defined on the whole space $\Rd$. Recall that the heat semigroup on $\Rd$ is given by
\begin{equation} \label{hsgrn}
    e^{t\Delta} \varphi=G_t \star \varphi,
\end{equation}
for all $\varphi \in \Sd$,  where $G_t$ is the Gauss kernel  on $\Rd$,
\begin{equation} \label{hk}
G_t (x) = (4\pi t)^{-\frac{N}{2}} e^{-\frac{|x|^2}{4t}} ,
\end{equation}
for all $t>0$ and $x\in\Rd$.  The heat semigroup $e^{t\Delta}$ was studied in \cite{CDW} on the space
\begin{equation} \label{spccdw}
 \mathcal{W}^{\sigma} =\left\{ u \in L^1_{loc}(\R^N\backslash\{0\});\ |x|^{\sigma}u(x) \in L^\infty(\Rd) \right\} ,
\end{equation}
with $0<\sigma<N$. It was observed in \cite{MTW}, that we can consider the case $N\le\sigma<2N$ for some class of antisymmetric initial values in $\mathcal{W}^{\sigma}$. See \cite[Corollary 1.7, p. 346]{MTW} and the discussion just after.

 If $\psi:\Omega_m \to\R$, we denote by $\widetilde{\psi}$ its
pointwise extension to $\Rd$ which is  antisymmetric with respect to $x_1, x_2,\cdots, x_m$.
If $\psi\in\X$, $\widetilde{\psi}$ has a natural interpretation as an element of
$\Sd$. See  \cite[Definition 1.6, p. 346]{MTW}. We also define the space
\begin{equation} \label{extspc}
\widetilde{\X}=\left\{ \widetilde{\psi} ;\ \psi\in \X\right\} \subset \Sd,
\end{equation}
with the norm $\|\varphi\|_{\widetilde{\X}}=\|\varphi{_{|_{\Omega_m}}}\|_{\X}$, for all $\varphi\in \widetilde{\X}$. We also consider,
\begin{equation} \label{extball}
\widetilde{\B}=\left\{ \widetilde{\psi} ;\; \psi\in \B\right\}.
\end{equation}
We denote by $\widetilde{\Bs}$ the ball $\widetilde{\B}$ endowed with the weak$^\star$ topology.
$\widetilde{\Bs}$ inherits the metric space structure from $\Bs$. In addition, we observe that $\widetilde{\X} \subset \mathcal{W}^{\gamma+m}$ with continuous injection. However the two norms are not equivalent. On the other hand, $\Bts \subset (\Bg)^\star$ where $(\Bg)^\star$ denote the closed ball of radius $M$ on $\Wd$ endowed with the weak$^\star$ topology, but here the metric on $\Bts$ is equivalent to the one it inherits from the metric space $(\Bg)^\star$. See Proposition \ref{cvkinds} below.

 The heat semigroup $e^{t\Delta}$ is well-defined on $\widetilde{\X}$ and
\begin{equation} \label{sgrel}%
\widetilde{e^{t\Delta_m} \psi}=e^{t\Delta} \widetilde{\psi}.
\end{equation}
See \cite[Proposition 5.1, p. 361]{MTW}. The last formula is the key to the study the equation \eqref{nheq} in the space $\widetilde{\X}$. The following result is essentially a reformulation of Theorem~\ref{exist} for antisymmetric functions on $\R^N$.
\begin{theorem} \label{extexist}
Let $m\in \{1,2,\cdots,N\}$, $0<\gamma<N$ and $\alpha>0$. If $v_0\in \widetilde{\X}$,  there exists a unique solution $v\in C( (0,\infty), C_0(\Rd) \cap  \mathcal{A})$ of the equation \eqref{nheq}  such that
\begin{enumerate}
\item [(i)] $v(t) \to v_0$ in $L^1_{loc}(\Rz)$ as $t\to 0$ ;
\item [(ii)] there exists $C > 0$, independent of $v_0$, such that $\|v(t)\|_{\widetilde{\X}} \le C\|v_0\|_{\widetilde{\X}}$ for all $t > 0$.
\end{enumerate}
In addition,  the following properties hold.
\begin{enumerate}
\item [(iii)] For all $w_0 \in \widetilde{\X}$, $|v(t) - w(t)|\le e^{t \Delta} |v_0 - w_0|$ ; where $w$ is the solution of \eqref{nheq} with initial value $w_0$ satisfying (i) and (ii).
\item [(iv)] $v(t)$ satisfies the integral equation
\[
          v(t)= e^{t\Delta} v_0 - \int_0^t e^{(t-s)\Delta}\left( |v(s)|^{\alpha} v(s)  \right) ds,
\]
for all $t>0$.
\end{enumerate}
\end{theorem}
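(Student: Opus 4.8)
The plan is to deduce everything from Theorem~\ref{exist} by antisymmetric extension, using the intertwining identity \eqref{sgrel}. Given $v_0\in\widetilde{\X}$, write $v_0=\widetilde{\psi}$ with $\psi=v_0{_{|_{\Omega_m}}}\in\X$, let $u(t)=\S_m(t)\psi$ be the solution furnished by Theorem~\ref{exist}, and set $v(t)=\widetilde{u(t)}$, the antisymmetric extension. I claim that $v$ is the unique solution asserted by the theorem, and that each of (i)--(iv) is the image under extension of the corresponding property of $u$.

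The first and only genuinely structural point is that $v=\widetilde{u}$ is a classical solution of \eqref{nheq} on all of $\Rd$ lying in $C((0,\infty),C_0(\Rd)\cap\mathcal{A})$. Since $u(t)\in C_0(\Omega_m)$ vanishes on $\partial\Omega_m$, its antisymmetric extension is continuous across each hyperplane $\{x_i=0\}$ and belongs to $C_0(\Rd)\cap\mathcal{A}$; continuity in $t$ is inherited from $u$. To see that $v$ solves \eqref{nheq} classically one uses the reflection principle: each $T_i$ commutes with $\partial_t-\Delta$ and with the nonlinearity, because $|-s|^\alpha(-s)=-|s|^\alpha s$, so the antisymmetric extension of a solution on $\Omega_m$ satisfies \eqref{nheq} on $\Rd$ in the distributional sense, and parabolic regularity upgrades this to a classical solution for $t>0$. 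This is exactly the argument carried out in detail in Section~3 of \cite{TW2}, which applies verbatim to \eqref{nheq}; I regard this as the main (but already available) obstacle.

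The remaining properties transfer mechanically. For (ii), $\|v(t)\|_{\widetilde{\X}}=\|u(t)\|_{\X}\le C\|\psi\|_{\X}=C\|v_0\|_{\widetilde{\X}}$ by the definition of the $\widetilde{\X}$-norm and Theorem~\ref{exist}(ii). For (i), since $u(t)\to\psi$ in $L^1_{loc}(\Omega_m)$, covering a compact $K\subset\Rz$ by the $2^m$ reflected images of compact subsets of $\overline{\Omega}_m\setminus\{0\}$ and using that reflections preserve the $L^1$ norm gives $v(t)\to v_0$ in $L^1_{loc}(\Rz)$. For (iv), I would apply the extension to the integral equation \eqref{nheqINTE} and use \eqref{sgrel} term by term: $\widetilde{e^{t\Delta_m}\psi}=e^{t\Delta}\widetilde{\psi}=e^{t\Delta}v_0$, while $\widetilde{|u(s)|^\alpha u(s)}=|v(s)|^\alpha v(s)$ (again because extension commutes with $s\mapsto|s|^\alpha s$) gives $\widetilde{e^{(t-s)\Delta_m}(|u(s)|^\alpha u(s))}=e^{(t-s)\Delta}(|v(s)|^\alpha v(s))$, and the Bochner integral passes through the isometric extension map.

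Finally, (iii) and uniqueness. For (iii), Theorem~\ref{exist}(iii) gives $|u(t)-u^w(t)|\le e^{t\Delta_m}|\psi-\psi^w|$ on $\Omega_m$, where $\psi^w=w_0{_{|_{\Omega_m}}}$ and $u^w(t)=w(t){_{|_{\Omega_m}}}$. The key observation is that for any $\phi\ge0$ on $\Omega_m$ with even extension $\bar\phi$ one has $e^{t\Delta_m}\phi\le e^{t\Delta}\bar\phi$ on $\Omega_m$: folding the Gaussian integral defining $e^{t\Delta}\bar\phi$ onto $\Omega_m$ reproduces the kernel \eqref{hksec} but with each bracketed factor $e^{-|x_i-y_i|^2/4t}-e^{-|x_i+y_i|^2/4t}$ replaced by $e^{-|x_i-y_i|^2/4t}+e^{-|x_i+y_i|^2/4t}$, which dominates it since $\phi\ge0$. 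Applying this with $\phi=|\psi-\psi^w|$, whose even extension is $|v_0-w_0|$, yields $|v(t)-w(t)|\le e^{t\Delta}|v_0-w_0|$ on $\Omega_m$, and since both sides are even under the $T_i$ the inequality holds on all of $\Rd$. Uniqueness then follows by restriction: any antisymmetric solution satisfying (i)--(ii) restricts to a solution on $\Omega_m$ meeting the hypotheses of Theorem~\ref{exist}, hence equals $u$, so its antisymmetric extension equals $v$.
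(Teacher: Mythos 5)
Your proposal is correct and follows essentially the paper's own route: the paper disposes of Theorem~\ref{extexist} in Section~\ref{extensec} by declaring it an immediate consequence of Theorem~\ref{exist} via antisymmetric extension, using Propositions~\ref{cvkinds} and \ref{cvloc} to transfer the convergence statements and \eqref{sgrel} to transfer the semigroup, which is exactly your construction $v(t)=\widetilde{u(t)}$ together with the restriction argument for uniqueness. The one point where you supply a detail the paper leaves implicit is property (iii): since $|v_0-w_0|$ is even rather than antisymmetric, \eqref{sgrel} does not apply directly, and your kernel-domination observation (the Dirichlet kernel \eqref{hksec} is bounded by the folded Gaussian kernel with plus signs) is a correct and genuinely needed step to pass from $e^{t\Delta_m}|\psi-\psi^w|$ on $\Omega_m$ to $e^{t\Delta}|v_0-w_0|$ on $\Rd$.
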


Since $\widetilde{\X}\subset \mathcal{W}^{\gamma+m}$, where $\mathcal{W}^{\gamma+m}$ is given by \eqref{spccdw}, the last result gives a new class of initial values for which we have  global well-posedness of solutions in the case $\alpha<2/N$ (when $\gamma+m>N$). See \cite{CDEW} and  \cite[Section 4]{CDW}  for information about  non-uniqueness of solutions in the case $\alpha < 2/N$.

The semiflow $\mathcal S(t)$ defined by \eqref{sflowrn} extends to
$\widetilde{\X}$ as the following.

\begin{definition} \label{extsf}
Let $m\in \{1,2,\cdots,N\}$, $0<\gamma<N$ and $\alpha>0$.
Given $v_0\in \widetilde{\X}$ we set
 \[
          \mathcal S(t) v_0 = v(t),
\]
for all $t>0$, where $v\in C( (0,\infty), C_0(\Rd) \cap  \mathcal{A})$ is the unique solution of \eqref{nheq} given by Theorem \ref{extexist}.
\end{definition}
From the construction in Theorem \ref{extexist} and the uniqueness part we have the following formula
\begin{equation}
\label{relsflows}
              \mathcal S(t)  \widetilde{u_0} =  \widetilde{\mathcal S_m(t) u_0 }
\end{equation}
for all $t>0$ and $u_0\in\X$.
As in the case of the sectors $\Omega_m$, i.e. the flow  $\mathcal S(t)$  depends continuously on the  initial values.  The following is an adaptation of Theorem~\ref{depcont}.

\begin{theorem} \label{extdepcnt}
Let $m\in \{1,2,\cdots,N\}$, $0<\gamma<N$ and $M>0$. Then,
 $\mathcal S(t)$ is continuous $\widetilde{\Bs} \to C_0(\Rd)$, for all $t>0$.
\end{theorem}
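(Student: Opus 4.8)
The plan is to deduce Theorem~\ref{extdepcnt} directly from the corresponding result on the sector, Theorem~\ref{depcont}, using the extension-by-antisymmetry correspondence together with the relation \eqref{relsflows} and the compatibility of the relevant topologies recorded just before the statement. The key structural observation is that every object on $\R^N$ in the statement is the antisymmetric extension of a corresponding object on $\Omega_m$: by \eqref{relsflows} we have $\mathcal{S}(t)\widetilde{\psi} = \widetilde{\mathcal{S}_m(t)\psi}$ for $\psi\in\B$, and the map $\psi \mapsto \widetilde{\psi}$ is a bijection of $\B$ onto $\widetilde{\B}$ by \eqref{extball}. So continuity of $\mathcal{S}(t)$ on $\widetilde{\Bs}$ is \emph{formally} the same statement as continuity of $\mathcal{S}_m(t)$ on $\Bs$, provided the two topologies match up correctly under the extension map.

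First I would make precise the claim, stated in the paragraph preceding the theorem, that $\Bts$ inherits its metric space structure from $\Bs$, i.e.\ that $\psi \mapsto \widetilde{\psi}$ is a homeomorphism from $\Bs$ onto $\Bts$. This should follow from the definition of the weak$^\star$ topology on $\widetilde{\X}$ and the fact that testing an antisymmetric tempered distribution against test functions reduces, by the reflection identities \eqref{antisym}, to testing its restriction against test functions supported in $\Omega_m$; concretely, convergence $\widetilde{\psi_n} \to \widetilde{\psi}$ in $\Bts$ is equivalent to $\psi_n \to \psi$ in $\Bs$. Once this homeomorphism is in hand, I would take a sequence $v_0^{(n)} \to v_0$ in $\widetilde{\Bs}$, write $v_0^{(n)} = \widetilde{\psi_n}$ and $v_0 = \widetilde{\psi}$ with $\psi_n, \psi \in \B$, and conclude $\psi_n \to \psi$ in $\Bs$.

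Next I would apply Theorem~\ref{depcont} to obtain $\mathcal{S}_m(t)\psi_n \to \mathcal{S}_m(t)\psi$ in $C_0(\Omega_m)$, for each fixed $t>0$. The final step is to transfer this $L^\infty$-convergence on the sector back to $L^\infty$-convergence on $\R^N$: since $\mathcal{S}(t)v_0^{(n)} = \widetilde{\mathcal{S}_m(t)\psi_n}$ and $\mathcal{S}(t)v_0 = \widetilde{\mathcal{S}_m(t)\psi}$ by \eqref{relsflows}, and since the antisymmetric extension of a function on $\Omega_m$ has the same sup-norm as the function itself (reflection preserves absolute values), we get
\[
\|\mathcal{S}(t)v_0^{(n)} - \mathcal{S}(t)v_0\|_{L^\infty(\Rd)}
= \|\mathcal{S}_m(t)\psi_n - \mathcal{S}_m(t)\psi\|_{L^\infty(\Omega_m)} \to 0,
\]
which gives continuity $\widetilde{\Bs} \to C_0(\Rd)$, as desired. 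I would also note that $\widetilde{\mathcal{S}_m(t)\psi_n}$ genuinely lies in $C_0(\Rd) \cap \mathcal{A}$, so the target space is correct.

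The main obstacle is the first step, namely justifying that the extension map is a homeomorphism of the weak$^\star$ metric spaces, since this is where the distributional interpretation of $\widetilde{\psi}$ (via \cite[Definition 1.6]{MTW}) and the equivalence of metrics must be used carefully; the paragraph before the theorem asserts that $\Bts$ inherits the metric structure from $\Bs$ and that this metric is equivalent to the one inherited from $(\Bg)^\star$, but one must verify that the equivalence is strong enough that sequential convergence is preserved in both directions. Everything after that is a routine transfer through \eqref{relsflows} and the norm-preserving property of antisymmetric extension, so the substance of the proof is almost entirely contained in setting up the dictionary between the sector and the full space correctly.
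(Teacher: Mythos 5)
Your proposal is correct and follows essentially the same route as the paper: the authors dispose of Theorem~\ref{extdepcnt} by noting it is an immediate consequence of Theorem~\ref{depcont} on the sector, once Proposition~\ref{cvkinds} (whose equivalence of (i) and (iv) is exactly the homeomorphism $\Bs \to \Bts$ you identify as the key step) is combined with the flow relation \eqref{relsflows} and the fact that antisymmetric extension preserves the sup-norm. The "main obstacle" you flag is precisely what Proposition~\ref{cvkinds} settles, drawing on \cite[Propositions 3.1 and 5.1]{MTW}, so your outline matches the paper's argument in both structure and substance.
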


We now consider the long-time asymptotic behavior of the solutions described
in Theorem~\ref{extexist}.  In analogy with \eqref{Omg} and \eqref{omg} above, and
using a notation consistent with formulas (1.17) and (1.18) in \cite{CDWL} and \cite[Definition 1.2]{CDW}, we make the following
definitions.
For $v_0\in \widetilde{\X}$ we define the $\omega$-limit set of possible asymptotic forms of $v_0$, by
\begin{equation}
 \Omega^{\gamma+m}({v_0})= \left\{z \in \widetilde{\Bs};\ \exists\ \lambda_n
  \to \infty \ \text{\rm such that}\
         \lim_{n\to \infty} D_{\lambda_n}^{\gamma+m} v_0 = z\ {\rm in}\ \widetilde{\Bs}\right\} ,
\label{Omgcdew}
\end{equation}
and
the $\omega$-limit set of all limits
of  $\Gamma_{\sqrt t}^{\gamma+m}\S(1)v_0$, as $t\to \infty$,
by
\begin{equation}
\omega^{\gamma+m}(v_0) =\{f \in C_0(\Rd); \exists t_n
\to
 \infty\  \text{\rm such that}
         \lim_{n\to \infty} \| \Gamma_{\sqrt t_n}^{\gamma+m}\S(1)v_0 -f \|_{L^\infty(\Rd)}=0\} ,
\label{omgcdew} %
\end{equation}

The following three theorems are reformulations of the results above on the asymptotic
behavior of solutions, adapted from the case of the sectors $\Omega_m$ to the case of
antisymmetric functions on $\R^N$, in the three cases: $\alpha$ equals, is greater than, and
is less than $\frac{2}{\gamma + m}$.

\begin{theorem} \label{extencritical}
Let $m\in \{1,2,\cdots,N\}$,  $0 <\gamma< N$ and $M>0$. Let $\alpha>0$ be such that
\[
          \alpha=\frac{2}{\gamma+m}.
\]
It follows that
 \begin{enumerate}
\item [(i)] if $v_0, \varphi\in \Bt$ are such that
$\displaystyle \lim_{\lambda \to \infty} D_\lambda^{\gamma+m} v_0 =
\varphi$ in $\Bts$, then $\varphi$ is homogeneous of
degree $-(\gamma+m)$ and the solution $v(t)=\S(t)
v_0$ of \eqref{nheq} is asymptotically self-similar to $U(t)=\S(t) \varphi$;
\item [(ii)] $\omega^{\gamma+m}({v_0}) = \S(1) \Omega^{\gamma+m}(v_0)$, for all $v_0\in \widetilde{\X}$;
\item [(iii)] There exists $V_0 \in \Bt \cap C^\infty(\Rd)$, such that $\omega^{\gamma+m}({V_0}) = \S(1) \Bt$.
\end{enumerate}
\end{theorem}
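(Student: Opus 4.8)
The plan is to prove all three assertions by transporting the corresponding sector results---Theorem~\ref{inihomasysol}, Theorem~\ref{relomg} and Corollary~\ref{univsol}---through the antisymmetric extension map $\psi \mapsto \widetilde{\psi}$, which intertwines every structure appearing in the statement. Since the dilations preserve each coordinate sector, for $x\in\Omega_m$ one has $(D_\lambda^{\gamma+m}\widetilde{\psi})|_{\Omega_m} = D_\lambda^{\gamma+m}(\widetilde{\psi}|_{\Omega_m})$, and antisymmetry propagates the identity to all of $\Rd$, so $D_\lambda^{\gamma+m}\widetilde{\psi} = \widetilde{D_\lambda^{\gamma+m}\psi}$; the same holds for the space--time dilations, $\Gamma_\lambda^{\gamma+m}\widetilde{u} = \widetilde{\Gamma_\lambda^{\gamma+m}u}$ at each fixed $t$. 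By \eqref{relsflows} the flow intertwines, $\S(t)\widetilde{\psi} = \widetilde{\S_m(t)\psi}$, and similarly $e^{t\Delta}\widetilde{\psi}=\widetilde{e^{t\Delta_m}\psi}$ by \eqref{sgrel}. Finally the map is a homeomorphism $\Bs \to \Bts$ (the metric on $\Bts$ being inherited from $\Bs$) and a sup-norm isometry, $\|\widetilde{f}\|_{L^\infty(\Rd)} = \|f\|_{L^\infty(\Omega_m)}$, so convergence in $\Bts$, respectively in $C_0(\Rd)$, is equivalent to convergence of the restrictions in $\Bs$, respectively in $C_0(\Omega_m)$.

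For (i), write $v_0 = \widetilde{\psi}$ and $\varphi = \widetilde{\phi}$ with $\psi = v_0|_{\Omega_m},\ \phi = \varphi|_{\Omega_m} \in \B$. By the homeomorphism property and the commutation with $D_\lambda^{\gamma+m}$, the hypothesis $D_\lambda^{\gamma+m}v_0 \to \varphi$ in $\Bts$ is equivalent to $D_\lambda^{\gamma+m}\psi \to \phi$ in $\Bs$. Theorem~\ref{inihomasysol} then gives that $\phi$ is homogeneous of degree $-(\gamma+m)$ and that $u(t)=\S_m(t)\psi$ is asymptotically self-similar to $U_m(t) = \S_m(t)\phi$, i.e. $\Gamma_\lambda^{\gamma+m}u \to U_m$ as $\lambda\to\infty$ in the sense of \eqref{aselfsol}. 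Extending by antisymmetry and using $v = \widetilde{u}$, $U = \widetilde{U_m}$ together with the commutation of $\Gamma_\lambda^{\gamma+m}$ and the sup-norm isometry, we obtain that $\varphi=\widetilde\phi$ is homogeneous of degree $-(\gamma+m)$ and $\Gamma_\lambda^{\gamma+m}v \to U$, which is the asserted asymptotic self-similarity.

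For (ii), the same intertwining identities yield directly from the definitions \eqref{Omg}, \eqref{omg}, \eqref{Omgcdew}, \eqref{omgcdew} that $\Omega^{\gamma+m}(v_0) = \widetilde{\mathcal{Z}^\gamma(\psi)}$ and $\omega^{\gamma+m}(v_0) = \widetilde{\mathcal{Q}^\gamma(\psi)}$ with $\psi = v_0|_{\Omega_m}$: a sequence $D_{\lambda_n}^{\gamma+m}v_0$ converges in $\Bts$ exactly when $D_{\lambda_n}^{\gamma+m}\psi$ converges in $\Bs$, and likewise for the $\Gamma_{\sqrt{t_n}}^{\gamma+m}$ orbits in the sup norm. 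Applying Theorem~\ref{relomg}, $\mathcal{Q}^\gamma(\psi) = \S_m(1)\mathcal{Z}^\gamma(\psi)$, and pushing this forward through the extension while using $\widetilde{\S_m(1)z} = \S(1)\widetilde{z}$ from \eqref{relsflows} gives $\omega^{\gamma+m}(v_0) = \S(1)\,\widetilde{\mathcal{Z}^\gamma(\psi)} = \S(1)\,\Omega^{\gamma+m}(v_0)$, as required.

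For (iii), the natural candidate is $V_0 = \widetilde{U_0}$ with $U_0$ the universal datum of Corollary~\ref{univsol}, for which $\mathcal{Z}^\gamma(U_0)=\B$; then $\Omega^{\gamma+m}(V_0) = \widetilde{\mathcal{Z}^\gamma(U_0)} = \widetilde{\B} = \Bt$, and part (ii) immediately yields $\omega^{\gamma+m}(V_0) = \S(1)\Bt$. The remaining point, which I expect to be the \emph{main obstacle}, is the global regularity $V_0 \in C^\infty(\Rd)$: the fact that $U_0 \in C^\infty(\Omega_m)\cap C_0(\Omega_m)$ does not by itself make its antisymmetric extension smooth across the reflection hyperplanes $\{x_i = 0\}$, $1\le i\le m$, since this demands that all even-order transverse derivatives of $U_0$ vanish on $\partial\Omega_m$. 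To secure this I would not extend an arbitrary $U_0$ but instead run the universality construction of \cite[Theorem~1.4, p.~345]{MTW} directly in the antisymmetric setting on $\Rd$ (as in \cite[Corollary~1.7, p.~346]{MTW}), choosing the building blocks to be smooth antisymmetric functions---equivalently, odd across each hyperplane---so that the resulting $V_0$ is smooth on all of $\Rd$ while still satisfying $\Omega^{\gamma+m}(V_0) = \Bt$. With such a $V_0$ in hand, part (ii) closes the argument.
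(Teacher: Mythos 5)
Your proposal is correct and takes essentially the same route as the paper: the paper's entire proof of Theorem~\ref{extencritical} is the observation that, in light of the convergence equivalences of Propositions~\ref{cvkinds} and \ref{cvloc}, the statement follows from the sector results (Theorem~\ref{inihomasysol}, Theorem~\ref{relomg}, Corollary~\ref{univsol}) either by re-interpretation through the antisymmetric extension or by re-doing the proofs line for line, which is exactly your intertwining argument. Your concern about the smoothness of $V_0$ across the hyperplanes $\{x_i=0\}$ in part (iii) is a legitimate subtlety that the paper's terse proof glosses over, and it is resolved precisely as you suggest: one invokes the universal construction carried out directly in the antisymmetric setting on $\Rd$, i.e. \cite[Corollary 1.7, p. 346]{MTW}, rather than antisymmetrically extending an arbitrary smooth $U_0$ from the sector.
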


\begin{theorem} \label{extenlincase}
Let $m\in\{1,\cdots,N\}$, $0< \gamma<N$ and $M>0$. Let $\alpha>0$ be such that
\[
       \alpha> \frac{2}{\gamma+m}.
\]
It follows that
 \begin{enumerate}
 \item [(i)] if $v_0, \varphi\in \Bt$ are such that
$\displaystyle \lim_{\lambda \to \infty} D_\lambda^{\gamma+m} v_0 =
\varphi$ in $\Bts$, then $\varphi$ is homogeneous of
degree $-(\gamma+m)$ and  the solution $v(t)=\S(t)v_0$ of \eqref{nheq} is asymptotic to the self-similar solution of the linear heat equation $U(t)=e^{t\Delta} \varphi$;
\item [(ii)] $\omega^{\gamma+m}(v_0) = e^{\Delta}  \Omega^{\gamma+m}(v_0)$, for all $v_0\in \Bt$;
\item [(iii)] there exists $V_0 \in \Bt \cap C^\infty(\Rd)$, such that $\omega^{\gamma+m}({V_0}) = e^{\Delta} \Bt$.
\end{enumerate}
\end{theorem}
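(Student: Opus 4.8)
The plan is to obtain Theorem~\ref{extenlincase} directly from Theorem~\ref{lincase} by transporting each of its statements from the sector $\Omega_m$ to $\Rd$ through the antisymmetric extension map $\psi\mapsto\widetilde\psi$. The mechanism is the dictionary that already underlies Theorem~\ref{extexist}: for $v_0\in\widetilde{\X}$ write $u_0=v_0{_{|_{\Omega_m}}}\in\X$, so that $v_0=\widetilde{u_0}$. The dilation $D_\lambda^{\gamma+m}$ acts pointwise and preserves antisymmetry, hence $D_\lambda^{\gamma+m}v_0=\widetilde{D_\lambda^{\gamma+m}u_0}$; by \eqref{relsflows} one has $\S(t)v_0=\widetilde{\S_m(t)u_0}$, and consequently $\Gamma_{\lambda}^{\gamma+m}\S(1)v_0=\widetilde{\Gamma_{\lambda}^{\gamma+m}\S_m(1)u_0}$; finally \eqref{sgrel} gives $e^{t\Delta}\widetilde\psi=\widetilde{e^{t\Delta_m}\psi}$. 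Two topological facts make these identities usable: the map $z\mapsto\widetilde z$ is a homeomorphism $\Bs\to\Bts$, so weak$^\star$ convergence is preserved in both directions; and, since an antisymmetric function satisfies $\|\widetilde f\|_{L^\infty(\Rd)}=\|f\|_{L^\infty(\Omega_m)}$, uniform convergence on $\Rd$ of antisymmetric functions is equivalent to uniform convergence of their restrictions on $\Omega_m$, while a uniform limit of antisymmetric functions is again antisymmetric. Comparing \eqref{Omgcdew}--\eqref{omgcdew} with \eqref{Omg} and \eqref{omg}, these give the set identities
\[
\Omega^{\gamma+m}(v_0)=\widetilde{\mathcal{Z}^\gamma(u_0)},\qquad
\omega^{\gamma+m}(v_0)=\widetilde{\mathcal{Q}^\gamma(u_0)}.
\]

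For (i) I would restrict the hypothesis to $\Omega_m$: from $\lim_\lambda D_\lambda^{\gamma+m}v_0=\varphi$ in $\Bts$ and the homeomorphism above, $\lim_\lambda D_\lambda^{\gamma+m}u_0=\varphi{_{|_{\Omega_m}}}$ in $\Bs$. Theorem~\ref{lincase}(i) then says $\varphi{_{|_{\Omega_m}}}$ is homogeneous of degree $-(\gamma+m)$ and that $u(t)=\S_m(t)u_0$ is asymptotically self-similar to $e^{t\Delta_m}(\varphi{_{|_{\Omega_m}}})$. Applying the extension map, $\varphi=\widetilde{\varphi{_{|_{\Omega_m}}}}$ is homogeneous of degree $-(\gamma+m)$, since $D_\lambda^{\gamma+m}\varphi=\widetilde{D_\lambda^{\gamma+m}(\varphi{_{|_{\Omega_m}}})}=\varphi$, and by \eqref{sgrel} the solution $v(t)=\S(t)v_0=\widetilde{u(t)}$ is asymptotic to $\widetilde{e^{t\Delta_m}(\varphi{_{|_{\Omega_m}}})}=e^{t\Delta}\varphi$, the convergence on $\Rd$ being equivalent to that on $\Omega_m$ by the norm identity.

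For (ii) I would simply apply $\widetilde{\ \cdot\ }$ to the identity $\mathcal{Q}^\gamma(u_0)=e^{\Delta_m}\mathcal{Z}^\gamma(u_0)$ of Theorem~\ref{lincase}(ii): using $\widetilde{e^{\Delta_m}z}=e^{\Delta}\widetilde z$ one gets $\widetilde{\mathcal{Q}^\gamma(u_0)}=e^{\Delta}\widetilde{\mathcal{Z}^\gamma(u_0)}$, which by the dictionary is exactly $\omega^{\gamma+m}(v_0)=e^{\Delta}\Omega^{\gamma+m}(v_0)$. For (iii) I would take the universal $U_0\in\B\cap C^\infty(\Omega_m)\cap C_0(\Omega_m)$ provided by Theorem~\ref{lincase}(iii) and set $V_0=\widetilde{U_0}\in\Bt$; then $\omega^{\gamma+m}(V_0)=\widetilde{\mathcal{Q}^\gamma(U_0)}=\widetilde{e^{\Delta_m}\B}=e^{\Delta}\Bt$.

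The main obstacle is not the algebra of these identities but the two compatibility claims that make the dictionary rigorous. First, one must check that the $\omega$-limit set \eqref{omgcdew}, defined through $L^\infty(\Rd)$ convergence, introduces no limits beyond the antisymmetric extensions of the elements of $\mathcal{Q}^\gamma(u_0)$; this rests on antisymmetry being closed under uniform limits together with $\|\widetilde f\|_{L^\infty(\Rd)}=\|f\|_{L^\infty(\Omega_m)}$. Second, the smoothness assertion $V_0\in C^\infty(\Rd)$ in (iii) does \emph{not} follow merely from $U_0\in C^\infty(\Omega_m)\cap C_0(\Omega_m)$, since odd reflection across $\partial\Omega_m$ of a generic smooth function need not be smooth; rather it is guaranteed by the specific universal construction of \cite[Theorem 1.4, p. 345]{MTW}, whose building blocks are designed to have globally smooth antisymmetric extensions.
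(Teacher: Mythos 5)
Your proposal is correct and takes essentially the same route as the paper: Section \ref{extensec} obtains Theorem \ref{extenlincase} from Theorem \ref{lincase} precisely via the antisymmetric-extension dictionary, with Propositions \ref{cvkinds} and \ref{cvloc} supplying the equivalences of convergence on $\Omega_m$ and on $\Rd$ that you invoke, together with \eqref{relsflows} and \eqref{sgrel}. Your closing caveat about the smoothness of $V_0$ in (iii) is well taken; the paper passes over this point silently, relying, as you say, on the specific universal construction of \cite[Theorem 1.4, p. 345]{MTW}.
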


\begin{theorem}
\label{nonlincaseRn}
Let  $m \in \{1, \cdots, N\},$ $0<\gamma<N$ and $\alpha>0$ be such that
\begin{equation*}
         \alpha < \frac{2}{\gamma+m} .
\end{equation*}
Let $v_0\in \widetilde\X$ with $v_0|_{\Omega_m} \ge 0$, and let $v(t)=\mathcal{S}(t) v_0$
be the resulting solution of \eqref{nheq} as given by Definition~\ref{extsf}.  Suppose that there exist $R_0 > 0$ and $c_0 > 0$ such that
\begin{equation*}
    v_0(x) \ge c_0\psi_0(x), \; x \in \Omega_m, |x| \ge R_0,
\end{equation*}
where $\psi_0$ is given by \eqref{psi0}.
Then
\begin{equation}
\label{gvlimRn}
\lim_{t\to\infty} t^{\frac{1}{\alpha}}v(t,x\sqrt t)= g(x),
\end{equation}
uniformly on compact subsets of $\R^N$,
where $g \in C^{b,u}(\R^N)$
 is the antisymmetric (bounded, uniformly continuous) profile of the self-similar solution of \eqref{nheq} given
by  Proposition~\ref{newselsim}.
\end{theorem}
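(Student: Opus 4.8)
The plan is to reduce the statement to the sector result, Theorem~\ref{nonlincaseCptsector}, through the antisymmetric-extension machinery, so that the only genuinely new work is careful bookkeeping across the reflection hyperplanes. First I would set $u_0 = v_0|_{\Omega_m}$; since $v_0\in\widetilde{\X}$ we have $u_0\in\X$, and the hypotheses $v_0|_{\Omega_m}\ge 0$ and $v_0(x)\ge c_0\psi_0(x)$ for $x\in\Omega_m$, $|x|\ge R_0$, are exactly the hypotheses $u_0\ge 0$ and \eqref{condGV} of Theorem~\ref{nonlincaseCptsector} applied to $u_0$. By the fundamental identity \eqref{relsflows} together with the uniqueness in Theorem~\ref{extexist}, the solution $v(t)=\mathcal{S}(t)v_0$ is precisely the antisymmetric extension $\widetilde{u(t)}$ of $u(t)=\mathcal{S}_m(t)u_0$. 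Thus Theorem~\ref{nonlincaseCptsector} immediately yields
\[
\lim_{t\to\infty} t^{1/\alpha} u(t,x\sqrt{t}) = g(x)
\]
uniformly on compact subsets of $\overline{\Omega}_m$, where $g\in C_0^{b,u}(\Omega_m)$ is the sector profile of Proposition~\ref{bigselsim}.

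Next I would transfer this limit to all of $\Rd$ using antisymmetry. The key observation is that for each fixed $t$ the rescaled function $x\mapsto t^{1/\alpha}v(t,\sqrt{t}\,x)$ is again antisymmetric with respect to $x_1,\dots,x_m$, because the spatial dilation $x\mapsto\sqrt{t}\,x$ commutes with each reflection $T_i$ and the scalar factor $t^{1/\alpha}$ is harmless; its restriction to $\Omega_m$ is $t^{1/\alpha}u(t,\sqrt{t}\,x)$. Consequently the whole rescaled solution is the antisymmetric extension of its restriction to the sector, so the natural candidate for the limit on $\Rd$ is the antisymmetric extension $\widetilde{g}$, which I rename $g$ on $\Rd$. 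To promote uniform convergence from $\overline{\Omega}_m$ to $\Rd$, I would use the folding map $(x_1,\dots,x_N)\mapsto(|x_1|,\dots,|x_m|,x_{m+1},\dots,x_N)$, which carries any compact $K\subset\Rd$ into a compact $K'\subset\overline{\Omega}_m$. Writing $x'$ for the fold of $x$, antisymmetry gives $t^{1/\alpha}v(t,\sqrt{t}\,x)=(-1)^{k(x)}t^{1/\alpha}u(t,\sqrt{t}\,x')$ and $\widetilde{g}(x)=(-1)^{k(x)}g(x')$ with the same sign $(-1)^{k(x)}$, where $k(x)$ counts the negative coordinates among $x_1,\dots,x_m$. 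The two sign factors cancel in the difference, so $|t^{1/\alpha}v(t,\sqrt{t}\,x)-\widetilde{g}(x)|=|t^{1/\alpha}u(t,\sqrt{t}\,x')-g(x')|$, and uniform convergence on $K'$ forces uniform convergence on $K$.

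Finally I would check that $\widetilde{g}\in C^{b,u}(\Rd)$ and identify it with the profile of Proposition~\ref{newselsim}. Boundedness is immediate. Uniform continuity is the point requiring care: across each hyperplane $\{x_i=0\}$, $i\le m$, the extension changes sign, but since $g$ vanishes on $\partial\Omega_m$ and is uniformly continuous on $\overline{\Omega}_m$, for $x,y$ on opposite sides of the hyperplane both values are close to $0$ whenever $|x-y|$ is small, which yields global uniform continuity of $\widetilde{g}$. That $\widetilde{g}$ is the profile of the self-similar solution of Proposition~\ref{newselsim} follows once more from \eqref{relsflows}: the antisymmetric extension of the sector self-similar solution of Proposition~\ref{bigselsim} is a self-similar solution on $\Rd$, and uniqueness identifies it with the one of Proposition~\ref{newselsim}.

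The main obstacle is not any single hard estimate, since Theorem~\ref{nonlincaseCptsector} carries the analytic weight; it is rather the careful handling of the reflection hyperplanes, namely ensuring that both uniform convergence and uniform continuity survive the antisymmetric extension precisely at the boundary $\partial\Omega_m$ where the profile vanishes.
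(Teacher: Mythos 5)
Your proposal is correct and follows essentially the same route as the paper: the paper proves Theorem~\ref{nonlincaseRn} by declaring it an immediate consequence of the sector result (Theorem~\ref{nonlincaseCptsector}) via the antisymmetric extension and the identity \eqref{relsflows}, with the limit profile identified as the antisymmetric extension of the profile from Proposition~\ref{bigselsim}, i.e.\ that of Proposition~\ref{newselsim}. The only difference is that you spell out the folding/sign bookkeeping and the uniform continuity of the extended profile across $\partial\Omega_m$, details the paper leaves implicit.
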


Finally, in the appendix, for completeness we give a proof of Kato's parabolic inequality and the main application
for which we use it.  Also, we present some results which we found during the course of research for this article,
which we feel have some independent interest, but which ultimately were not needed for the proofs of the main results.
One of them concerns the lowest eigenvalue and corresponding eigenfunction for $-\Delta$
on $B_1 = \{x \in \Omega_m : |x| < 1\}$ with Dirichlet boundary conditions.

The authors wish to thank Philippe Souplet for several very helpful remarks concerning this research.

\section{Existence and continuity properties of solutions}
\label{existsec}

The purpose of this section is to study well-posedness of the equation \eqref{nheq} with initial
values in $\X$ and to give the proofs of Theorem \ref{exist} and Theorem \ref{depcont}.
For this purpose, we need several results from \cite{MTW}, sometimes in a slightly stronger version.
The first result below is a slight improvement of \cite[Proposition 2.5, p. 353]{MTW}.

\begin{proposition} \label{cvdd}%
Let $m\in \{1,\cdots, N\}$, $0<\gamma<N$ and $\psi \in \X$. Then,
\[
   e^{t\Delta_m} \psi \longrightarrow \psi,\ \ {\rm as}\  \ t \to 0\ \ {\rm on}\ \ L_{loc}^1(\Omega_m).
\]
In particular, the convergence is also in $\Dd$.
\end{proposition}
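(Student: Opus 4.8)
The plan is to establish the convergence on an arbitrary compact set $K\subset\Omega_m$, since convergence in $L^1_{\mathrm{loc}}(\Omega_m)$ means exactly convergence in $L^1(K)$ for every such $K$, and this in turn forces convergence in $\Dd$ (for $\phi\in\D$ one has $|\langle e^{t\Delta_m}\psi-\psi,\phi\rangle|\le\|\phi\|_{L^\infty}\,\|e^{t\Delta_m}\psi-\psi\|_{L^1(\mathrm{supp}\,\phi)}$). The essential difficulty is that $\D$ is \emph{not} dense in $\X$ for the weighted $L^\infty$ norm, so one cannot approximate $\psi$ directly and pass to the limit. Instead I localize: fix $\chi\in\D$ with $0\le\chi\le1$ and $\chi\equiv1$ on an open neighborhood $V$ of $K$, and split $\psi=\chi\psi+(1-\chi)\psi$, treating the two pieces by completely different arguments.

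For the local part $\chi\psi$: since $\rho_m$ is continuous and strictly positive on the compact set $\mathrm{supp}\,\chi\subset\Omega_m$, the function $\chi\psi$ lies in $L^1(\Omega_m)\cap L^\infty(\Omega_m)$ and has compact support. Because the kernel \eqref{hksec} satisfies $0\le K_t(x,y)\le(4\pi t)^{-N/2}e^{-|x-y|^2/(4t)}$, we have $\int_{\Omega_m}K_t(x,y)\,dy\le1$, so $e^{t\Delta_m}$ is a positivity-preserving $L^1(\Omega_m)$-contraction. Combining this contraction property with the strong continuity of $e^{t\Delta_m}$ on $C_0(\Omega_m)$ and the density of $\D$ in $L^1(\Omega_m)$, a standard three-$\varepsilon$ argument gives $e^{t\Delta_m}(\chi\psi)\to\chi\psi$ in $L^1(\Omega_m)$, hence in $L^1(K)$. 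Since $\chi\psi=\psi$ on $V\supset K$, this yields $e^{t\Delta_m}(\chi\psi)\to\psi$ in $L^1(K)$.

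For the tail part $(1-\chi)\psi$, supported in $\Omega_m\setminus V$ at distance $d_0:=\mathrm{dist}(K,\Omega_m\setminus V)>0$ from $K$, I claim $e^{t\Delta_m}((1-\chi)\psi)\to0$ uniformly on $K$. The key is a refined kernel bound that retains the vanishing of \eqref{hksec} on the faces $y_i=0$: from $e^{-(x_i-y_i)^2/4t}-e^{-(x_i+y_i)^2/4t}=e^{-(x_i-y_i)^2/4t}(1-e^{-x_iy_i/t})\le e^{-(x_i-y_i)^2/4t}\,x_iy_i/t$ (using $1-e^{-u}\le u$ and $(x_i+y_i)^2-(x_i-y_i)^2=4x_iy_i$), one obtains
\[
K_t(x,y)\le(4\pi t)^{-N/2}\,\frac{x_1\cdots x_m\,y_1\cdots y_m}{t^{m}}\,e^{-|x-y|^2/4t}.
\]
Together with the pointwise estimate $|\psi(y)|\le\|\psi\|_{\X}\,y_1\cdots y_m\,|y|^{-\gamma-2m}$ coming from the norm in \eqref{spc} and the identity $\rho_m^{-1}(y)=y_1\cdots y_m|y|^{-\gamma-2m}$ of \eqref{psi0}, and using $(y_1\cdots y_m)^2\le|y|^{2m}$, the integrand is bounded for $x\in K$ by $C_K\|\psi\|_{\X}\,t^{-N/2-m}e^{-|x-y|^2/4t}|y|^{-\gamma}$. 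Factoring $e^{-|x-y|^2/4t}\le e^{-d_0^2/8t}e^{-|x-y|^2/8t}$ on $\{|x-y|\ge d_0\}$ and using $\int_{\Rd}e^{-|x-y|^2/8t}|y|^{-\gamma}\,dy\le C t^{N/2}$ (uniform for $x\in K$ and $0<t\le1$, valid because $0<\gamma<N$ makes $|\cdot|^{-\gamma}$ locally integrable and $\int_{\Rd}(8\pi t)^{-N/2}e^{-|x-y|^2/8t}|y|^{-\gamma}\,dy$ stays bounded for $|x|\ge\mathrm{dist}(K,0)>0$), I get $\sup_{x\in K}|e^{t\Delta_m}((1-\chi)\psi)(x)|\le C_K\|\psi\|_{\X}\,t^{-m}e^{-d_0^2/8t}\to0$ as $t\to0$. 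This uniform bound gives $L^1(K)$ convergence as well, and adding the two parts completes the proof.

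The main obstacle is precisely this tail estimate. The naive bound $K_t(x,y)\le(4\pi t)^{-N/2}e^{-|x-y|^2/4t}$ discards the boundary vanishing and, against the weight $y_1\cdots y_m|y|^{-\gamma-2m}$, produces an integral that \emph{diverges} near the origin whenever $\gamma+m\ge N$; since the origin lies on $\partial\Omega_m$ and $K$ is away from it, this near-origin contribution cannot be ignored. Keeping the factor $y_1\cdots y_m$ from the reflected kernel is exactly what restores local integrability by collapsing the weight to the integrable $|y|^{-\gamma}$, while the off-diagonal Gaussian factor $e^{-d_0^2/8t}$ is what defeats the blow-up $t^{-N/2-m}$.
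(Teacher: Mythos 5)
Your proof is correct, but the decomposition is genuinely different from the paper's. The paper cuts off around the \emph{origin}: with a radial cutoff $\eta$ supported in $\{|y|\le \varepsilon/2\}$, $\varepsilon=d(0,K)$, it writes $\psi=\eta\psi+(1-\eta)\psi$, handles the singular near-origin piece $\eta\psi$ exactly as you handle your tail (the kernel bound \eqref{estbstsglin} converts the weight $y_1\cdots y_m|y|^{-\gamma-2m}$ into the locally integrable $|y|^{-\gamma}$, and the off-diagonal Gaussian $e^{-\varepsilon^2/16t}$ kills the $t$-powers), and then observes that the remaining piece $(1-\eta)\psi$ lies globally in $L^p(\Omega_m)$ for $p>\max\{1,N/(\gamma+m)\}$, so strong continuity of the semigroup on $L^p$ finishes the job. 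You instead cut off around the \emph{compact set} $K$: your regular piece $\chi\psi$ is compactly supported and bounded, disposed of by $L^1$ strong continuity, while your singular piece must absorb both the origin singularity \emph{and} the far field in one estimate --- which works because $|y|^{-\gamma}$ is integrable against the Gaussian at infinity as well as near $0$. The trade-off: the paper needs the kernel estimate only near the origin and leaves the far field untouched (it equals $\psi$ on $K$ directly), while your version needs only the most elementary Lebesgue theory ($L^1\cap L^\infty$, compact support) for the nice piece and a single unified tail bound; both hinge on the same two ingredients, the estimate \eqref{estbstsglin} and positive distance between $K$ and the singular support.

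One small imprecision: in your three-$\varepsilon$ argument, uniform convergence $e^{t\Delta_m}\phi\to\phi$ in $C_0(\Omega_m)$ for $\phi\in\D$ does not by itself give convergence in $L^1(\Omega_m)$, since the domain is unbounded. This is easily repaired (split $\|e^{t\Delta_m}\phi-\phi\|_{L^1}$ over a large ball containing $\mathrm{supp}\,\phi$, where the uniform bound applies, and its complement, where the Gaussian domination $|e^{t\Delta_m}\phi|\le \|\phi\|_{L^\infty}\int_{\mathrm{supp}\,\phi}G_t(\cdot-y)\,dy$ makes the mass vanish as $t\to0$), and the resulting fact --- strong continuity of the Dirichlet heat semigroup on $L^1(\Omega_m)$ --- is on exactly the same footing as the $L^p$ continuity the paper itself invokes without proof. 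So this is a gap in exposition, not in substance.
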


\begin{proof}
Let $\psi \in \X$ and $K$ be a fixed compact in $\Omega_m$. Let $\varepsilon=d(0,K)>0$
and $\eta \in
C^\infty(\Rd)$ denote a radial cut-off function, satisfying:
\begin{enumerate}
\item[(i)] $0\leq \eta \leq 1$, for all $x\in \Rd$,
\item[(ii)] $\eta(x)=1$, for all $x\in \Rd$ with $|x|\leq \varepsilon/4$,
\item[(iii)] $\eta(x)=0$, for all $x\in \Rd$ with $|x|\geq \varepsilon/2$.
\end{enumerate}
We write
\begin{equation}\label{decomp}
e^{t\Delta_m} \psi  = e^{t\Delta_m} [\eta\psi] + e^{t\Delta_m} [(1-\eta)\psi] .
\end{equation}
Using the inequality
\begin{equation*}
{\rm e}^{-\frac{(x_{i}-y_{i})^2}{4t}}-{\rm e}^{-\frac{(x_{i}+y_{i})^2}{4t}}
= {\rm e}^{-\frac{x_{i}^2}{4t}} {\rm e}^{-\frac{y_{i}^2}{4t}} \displaystyle
\int_{-\frac{x_{i}y_{i}}{2t}}^{\frac{x_{i}y_{i}}{2t}} {\rm e}^{s} ds
\leq \frac{ x_{i} y_{i} }{t} {\rm e}^{-\frac{(x_{i}-y_{i})^2}{4t}}
\end{equation*}
for all $i\in \{1,\cdots, m\}$, we deduce from \eqref{hksec} that for all $x,y \in \Omega_m$,
\begin{equation}
\label{estbstsglin}
   K_t(x,y)\le t^{-m} \left(\prod_{i=1}^m x_i y_i\right) G_t(x-y).
\end{equation}
Therefore,
\begin{eqnarray*}
|e^{t\Delta_m} [\eta\psi](x)| &\le& C \int_{\Omega_m} K_t(x,y)\; y_1\cdots y_m\; \eta(y)|y|^{-\gamma-2m} dy \\
&\leq & C\; t^{-m}\; x_1 \cdots x_m \int_{|y|\leq \varepsilon /2} G_t(x-y) |y|^{-\gamma} dy.
\end{eqnarray*}
Since, for $x \in K$ (hence $|x|\geq \varepsilon$) and $|y|\leq \varepsilon /2$, we have $|x-y| \geq |x|-|y|
\geq \varepsilon /2$, it follows that
\begin{equation*}
|e^{t\Delta_m} [\eta\psi](x)|  \leq C x_1 \cdots x_m t^{-(m+N/2)} e^{-\frac{\varepsilon^2}{16t}}
 \int_{|y|\leq {\varepsilon /2} } |y|^{-\gamma} dy, \ \forall\; x\in K.
\end{equation*}
This implies that $e^{t\Delta_m} [\eta\psi] \to 0$, a.e. pointwise on $K$, as $t\to 0$. Moreover, by Proposition
\cite[Theorem 1.1 (i), p. 343]{MTW}, we have
\begin{equation*}
|e^{t\Delta_m} [\eta\psi] | \leq C \psi_0, \ \forall\ t>0 .
\end{equation*}
Thus, by the dominated convergence theorem, $e^{t\Delta_m} [\eta\psi] \to 0$ on $L^1(K)$,
as $t \to 0$.

On the other hand, since $(1-\eta)\psi \in L^p(\Omega_m)$ for $p>\max\{1,N/(\gamma+m)\}$, it follows that
$e^{t\Delta_m} [(1-\eta)\psi]\to(1-\eta)\psi$ in $L^p(\Omega_m)$, as $t \to 0$.
In particular, since $K \subset \Omega_m$ is compact, $e^{t\Delta_m} [(1-\eta)\psi]\to \psi $ in $L^1(K)$, as $t\to 0$.
Using \eqref{decomp}, we obtain that $e^{t\Delta_m}\psi \to \psi$  in $L_{loc}^1(\Omega_m)$, as $t\to 0$.
This completes the proof.
\end{proof}

We also need to use a stronger version of \cite[Lemma 2.6, p. 355]{MTW}, as  follows.

\begin{lemma} \label{lemestnew}
Let $m \in \{1, .., N\}$ and $0<\gamma<N$. There exists $C>0$ such that
\begin{equation} \label{estnew}
\vert e^{t \Delta_m} \psi(x) \vert \leq C x_1 \cdots x_m \left(t+|x|^2\right)^{-\frac{\gamma+2m}{2}}\|
\psi\|_{\X},
\end{equation}
for all $t>0$, $x\in \Omega_m$ and  $\psi \in \mathcal{X}_{m,\gamma}$.
\end{lemma}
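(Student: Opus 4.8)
The plan is to bound $e^{t\Delta_m}\psi$ pointwise by the integral of the sector kernel against the pointwise majorant of $\psi$, and then to produce two complementary \emph{global} estimates whose minimum is exactly $(t+|x|^2)^{-(\gamma+2m)/2}$. Since $\psi\in\X$ means $|\psi(y)|\le \|\psi\|_{\X}\,(y_1\cdots y_m)\,|y|^{-\gamma-2m}$ for $y\in\Omega_m$, the starting point is
\[
|e^{t\Delta_m}\psi(x)|\le \|\psi\|_{\X}\int_{\Omega_m}K_t(x,y)\,y_1\cdots y_m\,|y|^{-\gamma-2m}\,dy.
\]
The target estimate \eqref{estnew} is invariant under $(t,x)\mapsto(\lambda^2 t,\lambda x)$ combined with the action of $D_\lambda^{\gamma+m}$ (via \eqref{dilsmg}), which is the conceptual reason it takes this homogeneous form; one could reduce to $t=1$, but it is just as quick to argue directly for all $t>0$.

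For the first (time-decay) bound I would insert the Gaussian majorant \eqref{estbstsglin}, namely $K_t(x,y)\le t^{-m}(x_1\cdots x_m)(y_1\cdots y_m)G_t(x-y)$, and then discard spatial decay through the crude inequality $(y_1\cdots y_m)^2\le |y|^{2m}$, valid on $\Omega_m$. Since $|y|^{2m}|y|^{-\gamma-2m}=|y|^{-\gamma}$, this collapses the integral to a Gaussian convolution,
\[
|e^{t\Delta_m}\psi(x)|\le \|\psi\|_{\X}\,t^{-m}\,x_1\cdots x_m\int_{\R^N}|y|^{-\gamma}G_t(x-y)\,dy.
\]
The convolution equals $t^{-\gamma/2}\,(G_t\ast|\cdot|^{-\gamma})$ rescaled, i.e. $\int_{\R^N}|y|^{-\gamma}G_t(x-y)\,dy=t^{-\gamma/2}(G_1\ast|\cdot|^{-\gamma})(x/\sqrt t)$, and $G_1\ast|\cdot|^{-\gamma}$ is bounded on $\R^N$ (split into $|z|\le 1$, where $|z|^{-\gamma}$ is integrable because $\gamma<N$, and $|z|\ge 1$, where the Gaussian mass is $\le 1$). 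Hence the convolution is $\le C_0\,t^{-\gamma/2}$ uniformly in $x$, and combining $t^{-m}t^{-\gamma/2}=t^{-(\gamma+2m)/2}$ gives the global time-decay estimate $|e^{t\Delta_m}\psi(x)|\le C\,x_1\cdots x_m\,t^{-(\gamma+2m)/2}\|\psi\|_{\X}$.

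For the second (space-decay) bound I would invoke the already-established estimate $|e^{t\Delta_m}\psi(x)|\le C\,\psi_0(x)\,\|\psi\|_{\X}=C\,x_1\cdots x_m\,|x|^{-\gamma-2m}\|\psi\|_{\X}$ from \cite[Theorem 1.1 (i), p. 343]{MTW}, valid for all $t>0$. It then remains to take the minimum of the two bounds: since $t+|x|^2\le 2\max(t,|x|^2)$ one has
\[
\min\!\big(t^{-(\gamma+2m)/2},\,|x|^{-\gamma-2m}\big)=\max(t,|x|^2)^{-(\gamma+2m)/2}\le 2^{(\gamma+2m)/2}\,(t+|x|^2)^{-(\gamma+2m)/2},
\]
which yields \eqref{estnew} with a new constant $C$. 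The main obstacle is precisely the loss of spatial decay in the crude step $(y_1\cdots y_m)^2\le|y|^{2m}$: it delivers the sharp time decay but only $|x|^{-\gamma}$ decay in space, so neither estimate alone is sharp, and the finer $\psi_0$-bound of \cite{MTW} is genuinely needed to supply the missing $|x|^{-2m}$ factor. The substance of the argument is then simply the observation that the two complementary regimes combine into the single weight $(t+|x|^2)^{-(\gamma+2m)/2}$.
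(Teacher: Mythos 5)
Your proof is correct, and it reaches \eqref{estnew} by a route that is organized genuinely differently from the paper's, even though the main ingredients overlap. The paper first reduces to the single function $\psi = \psi_0$, exploits homogeneity to write $e^{t\Delta_m}\psi_0(x) = t^{-(\gamma+m)/2}f(x/\sqrt t)$ with $f = e^{\Delta_m}\psi_0$, and then bounds the one profile $f$ by splitting space at $|x|=1$: for $|x|\ge 1$ it invokes $e^{\Delta_m}\psi_0 \le C\psi_0$ from \cite[Proposition 2.2, p. 349]{MTW}, while for $|x|\le 1$ it uses the kernel majorant \eqref{estbstsglin} together with the decay estimate $e^{\Delta}|\cdot|^{-\gamma} \le C(1+|x|^2)^{-\gamma/2}$ of \cite[Corollary 8.3, p. 531]{CDEW}; the bound for all $t>0$ then follows by scaling back. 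You bypass the self-similar reduction entirely and instead prove two estimates valid globally in $(t,x)$, namely the time-decay bound $C\,x_1\cdots x_m\,t^{-(\gamma+2m)/2}\|\psi\|_{\X}$ (via \eqref{estbstsglin}, the crude inequality $(y_1\cdots y_m)^2\le |y|^{2m}$, and an elementary proof that $G_1\star|\cdot|^{-\gamma}$ is bounded) and the space-decay bound $C\,x_1\cdots x_m\,|x|^{-\gamma-2m}\|\psi\|_{\X}$ (via \cite[Theorem 1.1 (i), p. 343]{MTW}), and you conclude by the observation that the minimum of $t^{-(\gamma+2m)/2}$ and $|x|^{-\gamma-2m}$ is comparable to $(t+|x|^2)^{-(\gamma+2m)/2}$. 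The two schemes encode the same dichotomy — the paper's spatial split at $|x|=1$ after scaling is exactly your split at $t=|x|^2$ — but yours is a bit more elementary at one point, since boundedness of $G_1\star|\cdot|^{-\gamma}$ suffices where the paper cites the sharper decay estimate from \cite{CDEW}. The price is that you need the time-uniform bound of \cite[Theorem 1.1 (i), p. 343]{MTW} for general $\psi\in\X$, rather than only the $t=1$ bound for $\psi_0$; this is unproblematic, as that theorem belongs to the earlier paper and is used in exactly this form elsewhere here (e.g. in the proof of Corollary \ref{esttaunew}), so there is no circularity.
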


\begin{proof}
It suffices to prove the Lemma for $\psi=\psi_0$. Since $\psi_0$ is homogeneous, we know that
  $e^{t \Delta_m} \psi_0$ is self similar and so
\begin{equation} \label{slfsmpsi0}%
e^{t \Delta_m}\psi_0(x)=t^{-\frac{\gamma+m}{2}} f\left(\frac{x}{\sqrt{t}}\right),
\end{equation}
where $f:=e^{\Delta_m}\psi_0$.
By \cite[Propostion 2.2, p. 349]{MTW}, we have
\[
    f(x)=e^{\Delta_m} \psi_0(x) \le C\psi_0(x) \le C x_1\cdots x_m |x|^{-\gamma-2m}
\]
for all $x\in\Omega_m$.
 Therefore, there exists $C>0$  such that
\[
 f(x) \leq C  x_1\cdots x_m (1+|x|^2)^{-\frac{\gamma+2m}{2}},
\]
for $|x|\ge1$. On the other hand, for all $x\in \Omega_m$, we have
\[
  f(x) =   \displaystyle
\int_{\Omega_m} K_1(x,y) \; \psi_0(y)\; dy .
\]
Using the inequality \eqref{estbstsglin}, we obtain that
\begin{eqnarray*}
{ f(x) } &\leq & C { x_1\cdots x_m} \displaystyle
\int_{ \Omega_m} G_1(x-y) \; y_1^2\cdots y_m^2
|y|^{-\gamma - 2m} \; dy\\
&\leq& C\;  { x_1\cdots x_m} \displaystyle
\int_{\Rd} {\rm e}^{-\frac{|x-y|^2}{4}}  |y|^{-\gamma} dy \\
&\le& C \; { x_1\cdots x_m}  \left({\rm e}^{\Delta} |\cdot|^{-\gamma}\right)(x) \\
 &\leq& C\; { x_1\cdots x_m} \; (1+|x|^2)^{-\frac{\gamma}{2}},
\end{eqnarray*}
by \cite[Corollary 8.3, p. 531]{CDEW}. Hence, for $|x|\le 1$, we have
\[
 f(x)\leq C \; x_1\cdots x_m\; (1+|x|^2)^{-\frac{\gamma+2m}{2}}.
\]
Therefore, there exists $C>0$  such that
\[
 f(x)\leq C \; x_1\cdots x_m\; (1+|x|^2)^{-\frac{\gamma+2m}{2}},
\]
for all $x\in\Omega_m$. Using the relation \eqref{slfsmpsi0}, we deduce that
\[
 e^{t \Delta_m}\psi_0 (x) \leq C \; x_1\cdots x_m\; \left(t+|x|^2\right)^{-\frac{\gamma+2m}{2}}.
\]
This proves the result.\\\end{proof}

The following is a version of \cite[Corollary 8.3, p. 531]{CDEW} adapted from $\R^N$ to $\Omega_m$.

\begin{corollary} \label{esttaunew}
Let $m\in\{1,\cdots,N\}$, $0<\gamma<N$ and $A>0$. There exists $C>0$ such that if $\tau\ge0$ and
$u_0 \in \X$ is such that
$|u_0(x)|\le Ax_1\cdots x_m\; (\tau+|x|^2)^{-\frac{\gamma+2m}{2}}$ for $x\in\Omega_m$, then
\[
   \left| e^{t\Delta_m} u_0(x) \right| \le C\; x_1\cdots x_m\; (\tau+t+|x|^2)^{-\frac{\gamma+2m}{2}},
\]
for all $t>0$ and all $x\in\Omega_m$.
\end{corollary}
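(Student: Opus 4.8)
The plan is to combine two complementary a priori bounds on $e^{t\Delta_m}u_0$: a \emph{uniform} bound with no spatial decay, available because $u_0$ is dominated by a multiple of $x_1\cdots x_m$, and a \emph{decay} bound obtained directly from Lemma~\ref{lemestnew}. Throughout write $s=\frac{\gamma+2m}{2}>0$, so that the hypothesis reads $|u_0(x)|\le A\,x_1\cdots x_m\,(\tau+|x|^2)^{-s}$ and the goal is $|e^{t\Delta_m}u_0(x)|\le C\,x_1\cdots x_m\,(\tau+t+|x|^2)^{-s}$.

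First I would record the uniform bound. Since $(\tau+|y|^2)^{-s}\le \tau^{-s}$ for $\tau>0$, the hypothesis gives $|u_0(y)|\le A\tau^{-s}\,y_1\cdots y_m$ on $\Omega_m$. The kernel $K_t(x,y)$ in \eqref{hksec} is nonnegative on $\Omega_m\times\Omega_m$, since each factor $e^{-|x_i-y_i|^2/4t}-e^{-|x_i+y_i|^2/4t}$ is positive when $x_i,y_i>0$; hence
\[
|e^{t\Delta_m}u_0(x)|\le \int_{\Omega_m}K_t(x,y)|u_0(y)|\,dy \le A\tau^{-s}\int_{\Omega_m}K_t(x,y)\,y_1\cdots y_m\,dy = A\tau^{-s}\,x_1\cdots x_m,
\]
the last equality being the identity \eqref{hksgident}. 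For the decay bound I would check that $\|u_0\|_{\X}\le A$: indeed $\rho_m(x)|u_0(x)|=\frac{|x|^{\gamma+2m}}{x_1\cdots x_m}|u_0(x)|\le A\big(\tfrac{|x|^2}{\tau+|x|^2}\big)^{s}\le A$. Lemma~\ref{lemestnew} then supplies a constant $C_0=C_0(m,\gamma,N)$ with $|e^{t\Delta_m}u_0(x)|\le C_0 A\,x_1\cdots x_m\,(t+|x|^2)^{-s}$ for all $t>0$.

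Finally I would patch the two estimates. Setting $r=t+|x|^2$, they give $|e^{t\Delta_m}u_0(x)|\le A\,x_1\cdots x_m\,\min\{\tau^{-s},\,C_0 r^{-s}\}$, so it remains to prove the elementary inequality $\min\{\tau^{-s},C_0 r^{-s}\}\le C'(\tau+r)^{-s}$ with $C'=2^{s}\max\{1,C_0\}$. This splits into the case $r\le\tau$ (where $\tau+r\le 2\tau$, hence $\tau^{-s}\le 2^s(\tau+r)^{-s}$) and the case $r\ge\tau$ (where $\tau+r\le 2r$, hence $C_0 r^{-s}\le 2^s C_0(\tau+r)^{-s}$); the degenerate case $\tau=0$ is covered directly by the decay bound alone. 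Substituting back $r=t+|x|^2$ yields the claim with $C=C'A$.

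I expect no serious obstacle here; the only point requiring care is that the final constant $C$ be uniform in $\tau\ge 0$, which is exactly what the case analysis $r\lessgtr\tau$ guarantees, the two regimes being precisely where each of the two bounds is sharp. An alternative, perhaps closer to the spirit of \cite[Corollary 8.3, p. 531]{CDEW}, would be to reduce to $\tau=1$ via the rescaling $v_0=D_{\sqrt\tau}^{\gamma+m}u_0$ together with the commutation relation \eqref{dilsmg}: a short computation using \eqref{dilisom} shows $v_0$ satisfies the hypothesis with $\tau=1$, and transporting the $\tau=1$ conclusion back under the same dilation recovers the general case. I prefer the direct two-bound argument above, as it avoids this scaling bookkeeping.
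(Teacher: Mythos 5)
Your proof is correct, and it takes a genuinely different route from the paper's. The paper first reduces to $\tau=1$ by the dilation $D_{1/\sqrt\tau}^{\gamma+m}$ and the commutation relation \eqref{dilsmg} (exactly the alternative you sketch and discard), and then, for the majorant $g(x)=x_1\cdots x_m(1+|x|^2)^{-\frac{\gamma+2m}{2}}$, runs a two-regime analysis in $t$: for $t\ge 1+|x|^2$ it combines the pointwise kernel bound \eqref{estbstsglin} with the $\R^N$ estimate of \cite[Corollary 8.3, p. 531]{CDEW} to extract decay in $t$, while for $t\le 1+|x|^2$ it uses the identity \eqref{hksgident} when $|x|\le 1$ and the bound $e^{t\Delta_m}g\le C\psi_0$ from \cite[Theorem 1.1 (i), p. 343]{MTW} when $|x|\ge 1$. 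You instead note that the target $(\tau+t+|x|^2)^{-s}$, with $s=\frac{\gamma+2m}{2}$, is equivalent up to the factor $2^{s}\max\{1,C_0\}$ to having \emph{both} the flat bound $\tau^{-s}$ and the decaying bound $(t+|x|^2)^{-s}$, and you obtain the first from kernel positivity plus \eqref{hksgident} and the second from Lemma \ref{lemestnew} after the (correct) verification that $\|u_0\|_{\X}\le A$. Your argument is shorter, handles all $\tau\ge0$ uniformly with no scaling bookkeeping, and has no circularity, since Lemma \ref{lemestnew} is established before the corollary and independently of it. What you gain is economy: the paper's $\tau=1$ computation essentially re-proves a strengthened version of Lemma \ref{lemestnew} for the specific function $g$ (with $1+t+|x|^2$ in place of $t+|x|^2$), whereas your min-trick shows that this strengthening is free once the lemma and \eqref{hksgident} are in hand; the paper's route would be the one to follow only if Lemma \ref{lemestnew} were not already available in this form.
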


\begin{proof} From Lemma \ref{lemestnew}, the result is true for $\tau=0$. Next, we consider the case $\tau=1$. We put $g(x)= x_1\cdots x_m\;(1+|x|^2)^{-\frac{\gamma+2m}{2}}$.
 Using \eqref{estbstsglin}, we obtain that
\begin{eqnarray*}
        e^{t\Delta_m} g (x)  &\le \; t^{-m} \; x_1\cdots x_m\;  \displaystyle\int_{ \Rd} G_t(x-y)\;
 \; y_1^2\cdots y_m^2\; \left(1+|y|^2\right)^{-\frac{\gamma+2m}{2}} \; dy \\
&\le  \; t^{-m} \; x_1\cdots x_m\; \displaystyle\int_{ \Rd} G_t(x-y)\;
 \; \left(1+|y|^2\right)^{-\frac{\gamma}{2}} \; dy .
\end{eqnarray*}
By \cite[Corollary 8.3, p. 531]{CDEW}, we have
\begin{eqnarray*}
       e^{t\Delta_m} g(x)  &\le&  C  \; x_1\cdots x_m\; t^{-m} \left(1+t+|x|^2\right)^{-\frac{\gamma}{2}} \\
 &\le&  C \; x_1\cdots x_m\;  \left(1+t+|x|^2\right)^{-\frac{\gamma+2m}{2}} ,
\end{eqnarray*}
for $t\ge 1+|x|^2$, so that  $(2t)^{-m} \le  \left(1+t+|x|^2\right)^{-m}$.

If $t\le 1+|x|^2$, we have $ \left(1+|x|^2\right)^{-\frac{\gamma+2m}{2}} \le C \left(1+t+|x|^2\right)^{-\frac{\gamma+2m}{2}}$, so it suffices to prove that
\begin{eqnarray*}
       e^{t\Delta_m} g(x)  &\le& C  \; x_1\cdots x_m\; \left(1+|x|^2\right)^{-\frac{\gamma+2m}{2}} .
\end{eqnarray*}
Using \eqref{hksgident}, we obtain that
\begin{eqnarray*}
 e^{t\Delta_m} g(x)  =
 \int_{ \Omega_m} K_t(x,y)\; y_1\cdots y_m\;
 \left(1+|y|^2\right)^{-\frac{\gamma+2m}{2}} \; dy \le
  x_1 \cdots x_m .
\end{eqnarray*}
Hence for $|x|\le 1$,
\begin{equation*}
        e^{t\Delta_m} g(x)  \le C  \; x_1 \cdots x_m \;  \left(1+|x|^2\right)^{-\frac{\gamma+2m}{2}} .
\end{equation*}
In addition, $g\in \X$ so by \cite[Theorem 1.1 (i), p. 343]{MTW},
\begin{equation*}
     e^{t\Delta_m} g(x)   \le C \psi_0 (x) \le  C \;x_1\cdots x_m\; |x|^{-\gamma-2m}.
\end{equation*}
Therefore, if $|x|> 1$, so that  $ \left(1+|x|^2\right)^{\frac{\gamma+2m}{2}} \le (2|x|)^{\gamma+2m}$, we have
\begin{equation*}
       e^{t\Delta_m} g(x)  \le C \;x_1\cdots x_m\;  \left(1+|x|^2\right)^{-\frac{\gamma+2m}{2}}.
\end{equation*}
It follows that
\begin{equation} \label{tau1est}
      \left| e^{t\Delta_m} u_0(x) \right| \le  e^{t\Delta_m} g(x) \le C \;x_1\cdots x_m\;
       \left(1+t+|x|^2\right)^{-\frac{\gamma+2m}{2}} ,
\end{equation}
for all $x\in\Omega_m$ and all $t>0$. This proves the result for $\tau =1$.

 For the general case, we proceed by scaling and observe that
\[
            D_{\frac{1}{\sqrt \tau}}^{\gamma+m} g(x)=x_1\cdots x_m\;(\tau+|x|^2)^{-\frac{\gamma+2m}{2}} .
\]
Using formula \eqref{dilsmg} and the inequality \eqref{tau1est}, we obtain
\[
 \left| e^{t\Delta_m} u_0(x) \right| \le e^{t \Delta_m} [D_{\frac{1}{\sqrt \tau}}^{\gamma+m} g](x)=  D_{\frac{1}{\sqrt \tau}}^{\gamma+m}[e^{\frac{t}{\tau} \Delta_m} g](x) \le C \; x_1\cdots x_m\; (\tau+t+|x|^2)^{-\frac{\gamma+2m}{2}}.
\]
This completes the proof.
\end{proof}

We will also use the following lemma, which gives a property of convergence
 in $L^1_{\text{loc}}(\Omega_m)$ which is not shared by convergence in $\Dd$.

\begin{lemma}
 \label{cvlocsg}
  Let $(w_k)_{k\ge 1} \subset \B$ and $w\in\B$ be
such that $\ w_k \underset{k\to\infty}{\longrightarrow} w$ in  $L^1_{\text{loc}}(\Omega_m)$.
Then
\[e^{t\Delta_m} |w_k| \underset{k\to\infty}{\longrightarrow} e^{t\Delta_m} |w| \ \ {\rm in } \ \  C_0(\Omega_m).
\]
\end{lemma}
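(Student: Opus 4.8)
The plan is to fix $t>0$, set $h_k=|w_k|-|w|$, and prove $\|e^{t\Delta_m}h_k\|_{L^\infty(\Omega_m)}\to 0$; since $|w_k|,|w|\in\X$ with $\||w_k|\|_{\X}=\|w_k\|_{\X}\le M$, the functions $e^{t\Delta_m}|w_k|$ and $e^{t\Delta_m}|w|$ lie in $C_0(\Omega_m)$ by the mapping property of the semigroup on $\X$, so sup-norm convergence is exactly convergence in $C_0(\Omega_m)$. From $w_k,w\in\B$ we get the pointwise domination $|h_k|\le 2c_{m,\gamma}^{-1}M\,\psi_0$ and $\||h_k|\|_{\X}\le 2M$, while $\bigl||w_k|-|w|\bigr|\le|w_k-w|$ gives $h_k\to 0$ in $L^1_{\mathrm{loc}}(\Omega_m)$. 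Finally, since $K_t\ge 0$ we always have $|e^{t\Delta_m}h_k|\le e^{t\Delta_m}|h_k|$.

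The first step controls $x$ uniformly in $k$ near $\partial\Omega_m$ and near infinity. Applying Lemma~\ref{lemestnew} to $|h_k|$ yields $|e^{t\Delta_m}h_k(x)|\le e^{t\Delta_m}|h_k|(x)\le 2MC\,\Phi(x)$, where $\Phi(x)=x_1\cdots x_m(t+|x|^2)^{-\frac{\gamma+2m}{2}}$. One checks that $\Phi\in C_0(\Omega_m)$: it vanishes on $\partial\Omega_m$ because $x_1\cdots x_m=0$ there, and $\Phi(x)\le|x|^{-\gamma-m}\to 0$ as $|x|\to\infty$. Hence, given $\varepsilon>0$, the set $K_1=\{x\in\Omega_m:\Phi(x)\ge \varepsilon/(6MC)\}$ is a compact subset of $\Omega_m$, and $\sup_{x\notin K_1}|e^{t\Delta_m}h_k(x)|<\varepsilon/3$ for every $k$. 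It remains to get uniform smallness on the fixed compact set $K_1$.

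The second step splits $\int_{\Omega_m}=\int_{L}+\int_{\Omega_m\setminus L}$ for a compact $L\subset\Omega_m$ to be chosen. For the tail I combine \eqref{estbstsglin} with \eqref{psi0} to obtain, for $x\in K_1$, the bound $K_t(x,y)\psi_0(y)\le C\,|y|^{-\gamma}G_t(x-y)$ (absorbing $x_1\cdots x_m\le C$ on $K_1$ and $y_1^2\cdots y_m^2\le|y|^{2m}$ into the constant). Taking $L=\{y\in\Omega_m:\delta\le|y|\le A,\ y_i\ge\eta\ (1\le i\le m)\}$, the complement decomposes into $\{|y|<\delta\}$, where $\int|y|^{-\gamma}G_t(x-y)\,dy\le C\delta^{N-\gamma}\to 0$ using $\gamma<N$; the region $\{|y|>A\}$, where $|x-y|\ge A/2$ for $x\in K_1$ and Gaussian decay forces smallness; and the near-boundary slab $\{\delta\le|y|\le A,\ \exists i\le m:\ y_i<\eta\}$, which has measure $O(\eta)$ on which the integrand is bounded. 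Fixing $\delta$, then $A$, then $\eta$ makes $\sup_{x\in K_1}\int_{\Omega_m\setminus L}K_t(x,y)|h_k(y)|\,dy<\varepsilon/3$ for all $k$, using $|h_k|\le C\psi_0$. On the fixed compact $L$ the kernel is bounded, $\sup_{x\in K_1,\,y\in L}K_t(x,y)=C_L<\infty$, so $\sup_{x\in K_1}\bigl|\int_L K_t(x,y)h_k(y)\,dy\bigr|\le C_L\int_L|h_k|\to 0$ by $L^1_{\mathrm{loc}}$ convergence, hence is $<\varepsilon/3$ for $k$ large. Adding the three estimates gives $\|e^{t\Delta_m}h_k\|_{L^\infty(\Omega_m)}<\varepsilon$ for large $k$.

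The hard part is exactly the uniform control of the $y$-tail $\int_{\Omega_m\setminus L}K_t(x,y)\psi_0(y)\,dy$ over $x\in K_1$: the hypothesis only provides $L^1_{\mathrm{loc}}$ convergence on compact subsets of the \emph{open} sector, so the contributions near the origin (where $\psi_0$ is singular), near infinity, and near $\partial\Omega_m$ cannot be handled by the convergence and must instead be absorbed by the kernel estimate \eqref{estbstsglin} together with the domination $|h_k|\le C\psi_0$. The condition $\gamma<N$ is what guarantees the integrability of $|y|^{-\gamma}$ near the origin needed to close the argument.
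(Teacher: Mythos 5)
Your proof is correct, but it takes a genuinely different route from the paper's. The paper disposes of the lemma in three lines by citing two results from \cite{MTW}: since $|w_k|\to|w|$ in $L^1_{\text{loc}}(\Omega_m)$ implies convergence in $\Dd$, Proposition 3.1~(i) of \cite{MTW} upgrades this (for sequences lying in $\B$) to convergence in the weak$^\star$ metric of $\Bs$, and Proposition 4.1~(ii) of \cite{MTW}, which asserts that $e^{t\Delta_m}:\Bs\to C_0(\Omega_m)$ is continuous, then gives the conclusion immediately. You instead carry out a self-contained $\varepsilon/3$ argument: the uniform decay estimate of Lemma~\ref{lemestnew} controls $e^{t\Delta_m}h_k$ outside a compact $K_1$ uniformly in $k$ (this is where membership in $\B$ enters, via $|h_k|\le C\psi_0$); the kernel bound \eqref{estbstsglin} together with $\gamma<N$, Gaussian decay, and the $O(\eta)$ measure of the boundary slab handles the $y$-integral over $\Omega_m\setminus L$ uniformly over $x\in K_1$ and over $k$; and the $L^1_{\text{loc}}$ hypothesis is used only on the fixed compact $L$, where the kernel is bounded by $(4\pi t)^{-N/2}$. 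In effect you reprove, in this special case, the continuity statement that the paper imports from \cite[Proposition 4.1]{MTW}. What each approach buys: the paper's proof is shorter and reuses established machinery; yours makes the lemma independent of those two external propositions, relying only on estimates displayed earlier in this paper (Lemma~\ref{lemestnew} and \eqref{estbstsglin}, both proved before this lemma, so there is no circularity), at the cost of length. Your argument is complete --- the choice of $K_1$ as a superlevel set of $\Phi\in C_0(\Omega_m)$ correctly yields a compact subset of the open sector, and the order of choices (first $\delta$, then $A$, then $\eta$, then $k$ large) closes all the estimates.
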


\begin{proof}
Since $w_k \to w$ in $L^1_{\text{loc}}(\Omega_m)$, then $|w_k| \to |w|$ in $L^1_{\text{loc}}(\Omega_m)$ hence $|w_k| \to |w|$ in $\Dd$.
From \cite[Proposition 3.1 (i), p. 356]{MTW}, and since $(|w_k|)_{k\ge 1}, |w|\subset \B$ we deduce that $|w_k| \to |w|$ in $\Bs$. Since by \cite[Proposition 4.1 (ii), p. 359]{MTW}, $e^{t\Delta_m}: \Bs \to C_0(\Omega_m)$ is continuous, it follows that $e^{t\Delta_m} |w_k| \to e^{t\Delta_m} |w| $ on $C_0(\Omega_m)$, as $k\to\infty$.
\end{proof}

We now give the proof of Theorem \ref{exist}.

\begin{proof}[\bf Proof of Theorem \ref{exist}]

Let $u_0\in \X$ and let $(\mathcal{K}_n)_{n\geq1}$
be the sequence of nondecreasing compacts in $\Omega_m$ defined by:
\begin{equation*}
\mathcal{K}_n = \left\{x \in \Omega_m,\ \text{such that  } d(x, \partial \Omega_m) \geq
\frac{1}{n}\text{ and }|x| \leq n\right\}.
\end{equation*}
We consider the function
\begin{equation*} \label{cutoff}
u_{0,n} =\xi_n  u_0,
\end{equation*}
where $\xi_n$ is a cut-off function satisfying
           \begin{enumerate}
  \item[(i) ] $\xi_n \in C^{\infty}(\Omega_m)$,
  \item[(ii)] $0 \leq \xi_n \leq 1$, for all $x\in \Omega_m$,
  \item[(iii)] $\xi_n(x)=1$, for all $x\in \mathcal{K}_{n}$,
  \item[(iv)] $\xi_n(x)=0$, for all $x\in \Omega_m \backslash \mathcal{K}_{n+1}$.
           \end{enumerate}
Note that, $u_{0,n}\in \X$, for all $n\ge 1$, and
\begin{itemize}
\item $\|u_{0,n}\|_{\X} \le \|u_{0,n+1}\|_{\X} \le \|u_0\|_{\X}$ ;

\item $u_{0,n} \to u_0$ pointwise and in $L^1_{\text{loc}}(\Omega_m)$ (hence in $\Dd$) as $n \to \infty$ ;

\item for a fixed compact $K$ on $\Omega_m$, then there exist $n_0$ such that $u_{0,n} = u_0$ on $K$, for all $n\geq n_0$.
\end{itemize}

\textbf{Existence:} The proof is motivated by the proof of \cite[Theorem 8.8, p. 536]{CDEW}.
Since ${u_{0,n}} \in L^p(\Omega_m),\; 1\le p <\infty$,  we consider the unique
solution $u_n \in C([0,\infty), L^p(\Omega_m)) \cap C((0, \infty), C_0(\Omega_m))$ of \eqref{nheq} with initial value $u_{0,n}\in L^p(\Omega_m)$. It follows from Kato's parabolic inequality (see Corollary \ref{cki} in the appendix)  that, $\forall n,\ell\in\N^\star$,
\begin{equation}\label{ineqkato}
|u_n(t) - u_\ell(t)| \leq e^{t\Delta_m}|u_{0,n} - u_{0,\ell}|, \ \forall\;  t>0.
\end{equation}
Since $|u_{0,n} - u_{0,\ell}| \leq |u_{0,n} - u_{0}|$, for all $\ell >n$, we have that
\begin{equation}\label{limkato}
|u_n(t) - u_\ell(t)| \leq e^{t\Delta_m}|u_{0,n} - u_{0}|, \
\end{equation}
for all $t>0$ and $\ell >n$. In addition, $\|u_{0,n}-u_{0}\|_{\X}\le 2\|u_0\|_{\X}$, $\forall n\geq1$, and $({u_{0,n}-u_{0}}) \to 0$ in $L^1_{\text{loc}}(\Omega_m)$  as $n\to\infty$, and so it follows from Lemma
\ref{cvlocsg} that $e^{t\Delta_m}|u_{0,n} - u_{0}|\to0$ on $C_0(\Omega_m)$,  as $n\to\infty$. Therefore, from \eqref{limkato}, $u_n(t)$ is a Cauchy sequence in $C_0(\Omega_m)$, for all $t>0$, and so there exists a function
$u(t)$ such that $u_n(t)$ converge to $u(t)$ in $C_0(\Omega_m)$. Furthermore, by letting $\ell \to\infty$ in \eqref{ineqkato}, we obtain that
\[
        |u_n(t) - u(t)| \le e^{(t-\varepsilon)\Delta_m} \left[ e^{\varepsilon\Delta_m}|u_{0,n} - u_{0}| \right]
\]
for all $t>\varepsilon>0$. Since $e^{\varepsilon\Delta_m}|u_{0,n} - u_{0}|\to 0$ in $C_0(\Omega_m)$,  as $n\to\infty$, and $e^{(t-\varepsilon)\Delta_m}$ is $C_0$ contraction on $C_0(\Omega_m)$, we deduce that
$u_n$ converges to $u$ on $L^\infty([\varepsilon, \infty), C_0(\Omega_m))$, for all $\varepsilon>0$.
The limit function $u\in C((0, \infty), C_0(\Omega_m))$ is clearly a solution of \eqref{nheq}.

Again by Corollary \ref{cki}, we have that
\begin{equation*}
      |u_n(t)| \leq e^{t\Delta_m}|u_{0,n}| \le e^{t\Delta_m}|u_{0}| \in \X,
\end{equation*}
for all $n \geq 1$. In addition, by letting $n\to \infty$, we obtain that
\begin{equation} \label{bnds}
      |u(t)| \leq  e^{t\Delta_m}|u_{0}| ,
\end{equation}
and so, by \cite[Theorem 1.1 (i), p. 343]{MTW},  we deduce that
\begin{equation} \label{bdg}
       \|u(t)\|_{\X} \leq  C\|u_0\|_{\X},
\end{equation}
for all $t>0$. This proves (ii).

It remain now to show that $u(t)\to u_0$ on $L^1_{\rm loc}(\Omega_m)$, as $t\to0$. We fix a compact subset $K \subset \Omega_m$ and $n$ such that $u_{0,n} = u_0$ on $K$. Thus,
\[
 \int_{K} |u(t) -u_0| =  \int_{K} |u(t) -u_{0,n}|  \leq  \int_{K} |u(t) -u_n(t)|
+ \int_{K} |u_n(t) -u_{0,n}| .
\]
By letting $\ell \to \infty$ in \eqref{ineqkato}, we have that
\[
      |u_n(t) - u(t)| \leq   e^{t\Delta_m}|u_{0,n} - u_{0}|.
\]
The Proposition \ref{cvdd} shows that $e^{t\Delta_m}|u_{0,n} - u_{0}| \to |u_{0,n} - u_{0}| $ on $L^1_{\rm loc}(\Omega_m)$, as $t\to 0$. Therefore,
\[
    \int_{K} e^{t\Delta_m}|u_{0,n} - u_{0}| \underset{t\to0}{\longrightarrow} \int_{K} |u_{0,n}-u_0| =0,
\]
and so $\int_{K} |u(t) -u_n(t)| \to 0$, as $t\to0$.
Since $u_n \in C([0,\infty), L^p(\Omega_m))$, we have $u_n(t) \to u_{0,n}$ on $ L^p(\Omega_m)$, as $t\to 0$, so that
\[
     \int_{K} |u_n(t) -u_{0,n}| \to 0, \quad {\rm as}\ \ t\to0.
\]
This proves that $u(t)$ converges to $u_0$ on $L^1_{\rm loc}(\Omega_m)$, as $t\to0$, and so (i) is proved.

\textbf{Uniqueness:} Let $s>0$ and $u, v$ two solutions of \eqref{nheq} satisfying (i) and (ii).
We have that
\[
 \left\vert u(t+s) - v(t+s)\right\vert
\leq e^{t \Delta_m} \left\vert u(s) - v(s)\right\vert
\leq e^{t \Delta_m} \left\vert u(s) - u_0\right\vert
+ e^{t \Delta_m} \left\vert v(s) - u_0\right\vert ,
\]
for all $t>s>0$.  Let $M\geq C\|u_0\|_{\X}.$ Since $u(s), v(s)\in \Bs$ for all $s>0$ and $u(s), v(s) \to u_0$ in $L^1_{\rm loc}(\Omega_m)$, as $s\to0$, it follows from the Lemma \ref{cvlocsg} that the right hand side of the last inequality tends to $0$ in $C_0(\Omega_m)$, as $s\to 0$. This gives that $\left\vert u(t+s) - v(t+s)\right\vert \to 0$, as $s\to 0$. But since $u, v \in C((0,\infty), C_0(\Omega_m))$ we deduce that $\left\vert u(t+s) - v(t+s)\right\vert \to \left\vert u(t) - v(t)\right\vert$, as $s\to0$, for every fixed $t>0$. By uniqueness of the limit, we have $u(t)=v(t)$, for all $t>0$.

\textbf{Additional properties:} We next give the proof of the statements (iii), (iv) and (vi).
In fact, by \eqref{bnds}, we have
\[  |u(t)| \le e^{t\Delta_m} |u_0|  ,\]
and so, from Lemma \ref{lemestnew}, we obtain
\[
 |u(t,x)| \le C \; x_1\cdots x_m\;(t+|x|^2)^{-\frac{\gamma+2m}{2}} \|u_0\|_{\X},
\]
for all $t > 0$ and $x\in \Omega_m$.
In addition, if $u_0, v_0\in \X$, we denote $u(t)$ and $v(t)$ the corresponding solutions. For all $n\ge 1$, we let $u_{0,n} = u_0 \xi_n$ and $v_{0,n} = v_{0}
 \xi_n$ where $\xi_n$ is the cut-off function defined by \eqref{cutoff}. Then, for all $n\ge 1$,
\[
     |u_n(t) - v_n(t)| \leq   e^{t\Delta_m}|u_{0,n} - v_{0,n}|.
\]
Letting $n\to\infty$ and using Lemma \ref{cvlocsg}, we deduce that
\[
     |u(t) - v(t)| \leq   e^{t\Delta_m}|u_{0} - v_{0}|.
\]
Finally, assertion (vi) is true since, under the same conditions, $|u_n(t)| \le v_n(t)$, by well-known comparison results.

\textbf{Integral equation:}
Since $u_{0,n} \in L^p(\Omega_m)$, for all $p > \max[1,N\alpha/2]$, the corresponding solution $u_n(t)$ satisfies the integral equation
\[
    u_n(t)= e^{t\Delta_m} u_{0,n} - \int_0^t e^{(t-s)\Delta_m}\left( |u_n(s)|^{\alpha} u_n(s)  \right) ds
\]
for all $t>0$, where each term is in $C([0,\infty);L^p(\Omega_m))$.


Since $u_{0,n} \to u_0$ in $\Bs$ as $n\to \infty$, we know, for example by Lemma~\ref{cvlocsg}, that
$e^{t\Delta_m} u_{0,n} \to e^{t\Delta_m} u_{0}$ on $ C_0(\Omega_m)$ as $n\to \infty$, for all $t > 0$.
On the other hand, for all $0 < s < t$,
\[
     e^{(t-s)\Delta_m}\left( |u_n(s)|^{\alpha} u_n(s)  \right)
\to e^{(t-s)\Delta_m}\left( |u(s)|^{\alpha} u(s)  \right)
\]
on $C_0(\Omega_m)$, as $n\to \infty$.
From property (iii) above and \cite[Inequality (1.8), p. 261]{GVL}, we have
\[
     |u_n(s)| \le C\; x_1 \cdots x_m\;(s+|x|^2)^{-\frac{\gamma+2m}{2}}  \quad  {\rm and} \quad |u_n(s)| \le \left(\alpha s\right)^{-1/\alpha},
\]
for all $s> 0$. Therefore, for all $0<\varepsilon< 1$,
\begin{align*}
        |u_n(s)|^{\alpha+1} &= |u_n(s)|^{\alpha(1-\varepsilon)}|u_n(s)|^{1+\alpha\varepsilon}\\
        &\le C|u_n(s)|^{\alpha(1-\varepsilon)}(\omega(x))^{1+\alpha\varepsilon} (s+|x|^2)^{-\frac{(\gamma+m)}{2}(1+\alpha\varepsilon)}\\
& \le \frac{C}{s^{1-\varepsilon}} \omega(x) (s+|x|^2)^{-\frac{(\gamma+m)}{2}(1+\alpha\varepsilon)} .
\end{align*}
where $\omega(x)= x_1 \cdots x_m\;(s+|x|^2)^{-\frac{m}{2}} \le 1$. We then take $\varepsilon < \frac{N-\gamma}{\alpha(\gamma+m)}$ so that $(\gamma+m)(1+\alpha\varepsilon) = \gamma'+m$ with $0<\gamma'<N$, and $0 < \gamma < \gamma'$. It follows that,
\begin{equation} \label{unalpha}
         |u_n(s)|^{\alpha+1} \le\frac{C}{s^{1-\varepsilon}} x_1 \cdots x_m\;(s+|x|^2)^{-\frac{\gamma'+2m}{2}} .
\end{equation}
We deduce by Corollary \ref{esttaunew}, that
\[
              e^{(t-s)\Delta_m} | u_n(s) |^{\alpha+1} \le  \frac{C}{s^{1-\varepsilon}}\; x_1 \cdots x_m\; (t+|x|^2)^{-\frac{\gamma'+2m}{2}} ,
\]
for all $t>s> 0$. Likewise, since $u_n(s) \to u(s)$ in $C_0(\Omega_m)$,
\begin{equation}
\label{integrand}
              e^{(t-s)\Delta_m} | u(s) |^{\alpha+1} \le  \frac{C}{s^{1-\varepsilon}}\; x_1 \cdots x_m\; (t+|x|^2)^{-\frac{\gamma'+2m}{2}} ,
\end{equation}
for all $t>s> 0$, so that $s \to e^{(t-s)\Delta_m} | u(s) |^{\alpha+1}$
is in $L^1((0,t) ; C_0(\Omega_m))$.

We deduce, using the dominated convergence theorem,
\[
    \int_0^t e^{(t-s)\Delta_m}\left( |u_n(s)|^{\alpha} u_n(s)  \right) ds \to  \int_0^t e^{(t-s)\Delta_m}\left( |u(s)|^{\alpha} u(s)  \right) ds ,
\]
as $n\to \infty$ and so that the solution $u(t)$ satisfies:
\[
    u(t)= e^{t\Delta_m} u_{0} - \int_0^t e^{(t-s)\Delta_m}\left( |u(s)|^{\alpha} u(s)  \right) ds.
\]
This proves (v).

Note that \eqref{integrand} implies that
\begin{equation}
\label{intterm}
|\int_0^t e^{(t-s)\Delta_m}\left( |u(s)|^{\alpha} u(s)  \right) ds|
\le Ct^\epsilon x_1x_2 \cdots x_m\; (t+|x|^2)^{-\frac{\gamma'+2m}{2}},
\end{equation}
with $\epsilon$ and $\gamma'$ as above.

\end{proof}

The following lemma is needed to establish Theorem \ref{depcont}.

\begin{lemma}\label{lemex}
Let $(u_n)_{n\ge0}$ be a sequence of solutions of \eqref{nheq}, $u_n\in C((0,\infty), C_0(\Omega_m)),$ satisfying
\begin{equation} \label{bndest}
   | u_n(t,x)| \le C x_1 \cdots x_m\;(t+|x|^2)^{-\frac{\gamma+2m}{2}}, \ \forall\, x\in\Omega_m, \ \forall\, t>0.
\end{equation}
There exists a subsequence $(u_{n_k})_{k\ge0}$ and a solution $g\in C((0,\infty), C_0(\Omega_m))$ of \eqref{nheq} such that
$u_{n_k} \to g$, as $k\to \infty$,  in $C([\tau,\infty),C_0(\Omega_m))$, for every $\tau>0$.
\end{lemma}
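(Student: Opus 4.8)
The plan is to run an Arzel\`a--Ascoli compactness argument, extracting a limit that is then shown to solve \eqref{nheq} by passing to the limit in the integral equation. First I would record the uniform controls available. The hypothesis \eqref{bndest} together with the universal bound $|u_n(t,x)|\le(\alpha t)^{-1/\alpha}$ (see \eqref{upperbd}) show that $\{u_n\}$ is uniformly bounded on $[\tau,\infty)\times\Omega_m$ for every $\tau>0$, and moreover that the family is uniformly \emph{tight} at the three ``ends'' of the domain: $u_n(t,x)\to0$ uniformly in $n$ and in $t\ge\tau$ as $|x|\to\infty$, $u_n(t,\cdot)\to0$ uniformly near $\partial\Omega_m$ because of the degenerate factor $x_1\cdots x_m$, and $\|u_n(t)\|_{L^\infty(\Omega_m)}\to0$ uniformly in $n$ as $t\to\infty$. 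The same reasoning applied to the nonlinearity $f_n(s):=|u_n(s)|^\alpha u_n(s)$, using the computation leading to \eqref{unalpha}, gives $|f_n(s)|\le Cs^{-(1-\varepsilon)}x_1\cdots x_m(s+|x|^2)^{-\frac{\gamma'+2m}{2}}$ together with $|f_n(s)|\le(\alpha s)^{-(\alpha+1)/\alpha}$, uniformly in $n$.

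Next I would establish equicontinuity in both variables for $t\ge\tau$. Fixing $\tau>0$ and writing, for $t\ge\tau$, the integral equation from the base time $\tau/2$,
\[
u_n(t)=e^{(t-\tau/2)\Delta_m}u_n(\tau/2)-\int_{\tau/2}^t e^{(t-\sigma)\Delta_m}f_n(\sigma)\,d\sigma,
\]
I would invoke the smoothing estimates of the sector semigroup $e^{t\Delta_m}$ (in particular the gradient bound $\|\nabla e^{t\Delta_m}\phi\|_{L^\infty}\le Ct^{-1/2}\|\phi\|_{L^\infty}$, obtained by differentiating the kernel \eqref{hksec}). Since the data $u_n(\tau/2)$ lie in a fixed ball of $\X$ and the $f_n(\sigma)$ are uniformly bounded, this produces a uniform spatial modulus of continuity for $u_n(t)$, and the strong continuity of the semigroup together with the uniform bound on $f_n$ gives equicontinuity in $t$ on $[\tau,\infty)$. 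Alternatively, interior parabolic regularity applies directly: on compact subsets of $(0,\infty)\times\Omega_m$ the functions $u_n$ solve a heat equation with uniformly bounded right-hand side $-f_n$ and uniformly bounded $u_n$, yielding uniform interior $C^{1,2}$ bounds, while the weight in \eqref{bndest} controls the behaviour up to $\partial\Omega_m$.

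With the first two steps in hand, the family $\{u_n|_{[\tau,\infty)\times\Omega_m}\}$ is relatively compact in $C([\tau,\infty),C_0(\Omega_m))$: the uniform modulus of continuity gives equicontinuity, while the uniform tightness at the boundary, at spatial infinity, and as $t\to\infty$ supplies the compactness needed for Arzel\`a--Ascoli on this non-compact domain. Extracting for each $\tau=1/j$ a uniformly convergent subsequence and diagonalizing over $j$ yields one subsequence $u_{n_k}\to g$ in $C([\tau,\infty),C_0(\Omega_m))$ for every $\tau>0$, with $g\in C((0,\infty),C_0(\Omega_m))$. Finally I would pass to the limit in the integral equation: since $u_{n_k}\to g$ uniformly on $[\tau,\infty)$, the continuous map $w\mapsto|w|^\alpha w$ gives $f_{n_k}\to|g|^\alpha g$ uniformly on compact time intervals, and an integrable domination of the type \eqref{integrand} lets the dominated convergence theorem move the limit inside $\int_{\tau/2}^t e^{(t-\sigma)\Delta_m}f_{n_k}(\sigma)\,d\sigma$; hence $g$ satisfies the same integral equation and is therefore a classical solution of \eqref{nheq}.

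The main obstacle is the second step, and more precisely the passage to \emph{strong} equicontinuity in the $C_0(\Omega_m)$-topology uniformly up to the three non-compact ends of the domain. Weak$^\star$ precompactness of $\{u_n(t)\}$ in $\B$ is immediate for each $t$, but upgrading this to relative compactness in $C([\tau,\infty),C_0(\Omega_m))$ requires combining the uniform spatial modulus of continuity with the uniform tightness; the delicate point is obtaining equicontinuity all the way to the boundary $\partial\Omega_m$, where the weight $x_1\cdots x_m$ degenerates and the semigroup smoothing must be used in tandem with the decay built into \eqref{bndest}.
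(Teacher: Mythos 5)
Your argument is correct in substance, but it reaches the key compactness by a genuinely different mechanism than the paper. You prove full space--time equicontinuity by hand --- semigroup gradient estimates applied to the Duhamel formula, or interior parabolic regularity --- combine it with tightness at the boundary, at spatial infinity and as $t\to\infty$, apply Arzel\`a--Ascoli directly in $C([\tau,\infty),C_0(\Omega_m))$, and then identify the limit as a solution by passing to the limit in the integral equation. The paper instead only proves compactness of a single time slice $\{u_n(\tau)\}$: by \eqref{bndest} at $t=\tau/2$ the functions $u_n(\tau/2)$ lie in a bounded set of $L^p(\Omega_m)$ for $\max(1,N/(\gamma+m))<p\le\infty$, smoothing gives a uniform $W^{1,\infty}(\Omega_m)$ bound on $u_n(\tau)=\mathcal{S}_m(\tau/2)u_n(\tau/2)$, and Arzel\`a--Ascoli together with the decay in \eqref{bndest} gives relative compactness of $\{u_n(\tau)\}$ in $C_0(\Omega_m)$. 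The trajectories are then handled at one stroke by continuous dependence of the flow in $C_0(\Omega_m)$ --- indeed, by Corollary \ref{cki} the flow is nonexpansive in the sup norm, so convergence of $u_{n_k}(\tau)$ in $C_0(\Omega_m)$ propagates to uniform convergence of $u_{n_k}(t)=\mathcal{S}_m(t-\tau)u_{n_k}(\tau)$ for all $t\ge\tau$ --- and every limit point is automatically a trajectory of $\mathcal{S}_m$, hence a solution, so no passage to the limit in the Duhamel formula is needed; the uniform decay as $t\to\infty$ and the diagonal procedure over $\tau=1/j$ are the same in both proofs. Your route is more self-contained, using only linear semigroup estimates, while the paper's is shorter because it recycles the $C_0(\Omega_m)$ well-posedness theory already established. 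One point in your write-up does need patching: \emph{strong} continuity of $e^{t\Delta_m}$ does not by itself give equicontinuity in $t$ uniformly in $n$; you must apply $(e^{h\Delta_m}-I)$ to the family $\{u_n(t)\}$ and exploit the uniform $W^{1,\infty}$ bound you just derived (for instance, via antisymmetric extension, $\|(e^{h\Delta_m}-I)\phi\|_{L^\infty}\le C\sqrt{h}\,\|\nabla\phi\|_{L^\infty}$ for $\phi\in C_0(\Omega_m)\cap W^{1,\infty}(\Omega_m)$), or else rely on your parabolic-regularity alternative, which does the job.
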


\begin{proof}
Fix $\tau >0$. Using \eqref{bndest} with $t=\tau/2$, we deduce that the set $\{u_n(\tau/2), n\geq1\}$ is bounded in $L^p(\Omega_m)$, for all $p$ satisfying $\ {\rm max}\;(1, N/(\gamma+m) )< p \le\infty$. By standard smoothing effects, we see that the $u_n(\tau)=S_m(\tau/2) u_n(\tau/2)$ are uniformly bounded in $W^{1,\infty}(\Omega_m)$. Thus, $\{u_n(\tau)\}$ is relatively compact in $C(\overline{\Omega_m^R})$ for all $R>1$, where $\Omega_m^R=\{x\in \Omega_m; \; |x|\le R\}$. Using the decay estimate \eqref{bndest}, $\{u_n(\tau)\}$ is also relatively compact on $C_0(\Omega_m)$.
By continuous dependence  in $C_0(\Omega_m)$ of \eqref{nheq} it follows that $\{u_n(\cdot), n\geq1\}$ is relatively compact in $C([\tau,T],C_0(\Omega_m))$, for all $T>\tau$,
the limit points being solutions of \eqref{nheq}. Since, by \eqref{bndest}, $\| u_n(t) \|_{L^\infty} \to 0$ as $t\to \infty$, uniformly in $n\ge 1$, we may let $T=\infty$ in the previous property.
By letting $\tau \to 0$ and using a diagonal procedure, we see that there exists a solution $g\in C( (0,\infty),C_0(\Omega_m)) $ of \eqref{nheq} and a subsequence $(u_{n_k})_{k\geq0}$ such that $u_{n_k}   \to g$, as $k\to \infty$, in $C([\tau,\infty),C_0(\Omega_m))$, for every $\tau>0$.
\end{proof}

\begin{proof}[\bf Proof of Theorem \ref{depcont}]
Let $(u_{0,n})_{n\ge0} \subset \B$ and $u_0 \in \B$ such that $u_{0,n} \to u_0$ on $\Bs$, as $n\to \infty$.  Let $u(t) = \S_m(t) u_0$ and $u_n(t) = \S_m(t) u_{0,n}$, for all $t>0$ be the corresponding solutions of \eqref{nheq} as in Definition~\ref{sf}.  By  Theorem \ref{exist} (iv), we have
\begin{equation} \label{estrmp}
   |u_n(t,x)| \le C x_1 \cdots x_m\;(t+|x|^2)^{-\frac{\gamma+2m}{2}},
\end{equation}
for all  $x\in \Omega_m$ and $t>0$.
It follows from the Lemma \ref{lemex} that  there exists a solution $g\in C( (0,\infty),C_0(\Omega_m)) $ of \eqref{nheq} and a subsequence $(u_{n_k})_{k\geq0}$ such that $u_{n_k}   \to g$, as $k\to \infty$, in $C([\tau,\infty),C_0(\Omega_m))$, for every $\tau>0$. To see that $g(t) \to u_0$ in $L^1_{loc}(\Omega_m)$ as $t\to0$, we consider a compact $K\subset \Omega_m$ and let
$\mathcal{O}$  be an open, bounded and regular subset of  $\Omega_m$ with $K\subset \mathcal{O}$.
By \eqref{estrmp}, we have that $|u_n(t,x)|\le C$ for all $x\in \mathcal{O} $, $t>0$ and $n\ge0$. Since $u_0, (u_{0,n})_{n\ge0} \subset \Bs$ and $u_{0,n} \to u_0$ on $\Bs$, as $n\to \infty$,  we have by \cite[Proposition 3.1  (i), p. 356]{MTW}  that $u_{0,n} \to u_0$ in $\Dd$, we conclude using \cite[Lemma 2.6, p. 89]{CDW} that $g(t) \to u_0$ in $L^1(\mathcal{O})$, as $t\to0$. Therefore, $g(t) \to u_0$ in $L^1_{loc}(\Omega_m)$ as $t\to0$, and from uniqueness of solutions of \eqref{nheq} we have $g\equiv u$ so that the limit $g$ is determined by $u_0$. In particular, it does not depend on the subsequence $(u_{n_k})_{k\ge0}$, so that the whole sequence $(u_n)_{n\ge0}$ converges to $u$ in $C([\tau,\infty),C_0(\Omega_m))$, for every $\tau>0$. This completes the proof.
\end{proof}

\section{An upper bound on solutions}
\label{upperbound}

In this section we prove Proposition~\ref{preciseupperestimate}.  This proposition is stated for solutions on the domain
$\Omega_m$, but in fact is valid for solutions of \eqref{nheq}, or rather the associated integral equation, on any domain $\Omega$.  Accordingly, we state here the more
general version.  Both the statement and proof are inspired by the statement and proof of \cite[Theorem 1]{W}.
Moreover, we introduce some notation which will be used solely in this section.

Let $\Omega \subset \R^N$ be a domain, not necessarily bounded,  and let
$C_0(\Omega)$ be the space of continuous functions $f:\overline{\Omega} \to \R$ such that $f\equiv 0$ on the boundary $\partial \Omega$ and $f(x) \to 0$ as $|x|\to \infty$ in $\Omega$.
Let $e^{t\Delta}$ be the heat semigroup on $C_0(\Omega)$,  given by a kernel $k_t = k_t^\Omega$ as follows;
\begin{equation}
\label{sgomega}
e^{t\Delta}f(x) = \int_\Omega k_t (x,y) f(y) dy.
\end{equation}
In particular, if $f \in L^1_{loc}(\Omega)$, $f \ge 0$, then $e^{t\Delta}f$ is likewise defined by formula
\eqref{sgomega}.

\begin{theorem}
\label{genupest}
  Fix $\alpha > 0$.
Let $u_0 \in L^1_{loc}(\Omega)$, $u_0 \ge 0$, and suppose that the continuous function $u : (0,T) \to C_0(\Omega)$ is a nonnegative
solution of the integral equation
\begin{equation}
\label{nheqint}
u(t)= e^{t\Delta} u_0 - \int_0^t e^{(t-s)\Delta}\left( u(s)^{\alpha+1}\right) ds.
\end{equation}
It follows that
\begin{equation}
\label{upperestimates4}
u(t,x)\leq \left(\left(e^{t\Delta} u_0(x)\right)^{-\alpha}+\alpha t\right)^{-1/\alpha}
= \frac{e^{t\Delta} u_0(x)}{\left(1 + \alpha t\left(e^{t\Delta} u_0(x)\right)^{\alpha}\right)^{1/\alpha}}
\end{equation}
for all $0 < t < T$ and all $x\in \Omega.$
\end{theorem}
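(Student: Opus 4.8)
The plan is to compare $u$ with the explicit function obtained by following the spatially homogeneous ODE along the linear heat flow. Writing $V(t,x)=e^{t\Delta}u_0(x)$, I introduce
\[
F(t,a)=\left(a^{-\alpha}+\alpha t\right)^{-1/\alpha}=a\left(1+\alpha t\,a^{\alpha}\right)^{-1/\alpha},\qquad a\ge 0,\ t\ge 0,
\]
which is the solution of $\dot y=-y^{\alpha+1}$, $y(0)=a$; the right-hand side of \eqref{upperestimates4} is exactly $W(t,x):=F(t,V(t,x))$. It then suffices to prove $u\le W$ at every point where $V(t,x)<\infty$, the inequality being trivial elsewhere. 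Note also that $u$ is a classical solution of \eqref{nheq} for $t\in(0,T)$ by parabolic smoothing of the integral equation \eqref{nheqint}.

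The key computation is that $W$ is a supersolution of \eqref{nheq}. By smoothing, $V$ is a classical $C^2$ solution of the linear heat equation on $\Omega$ for $t>0$ (on the open set where it is finite). A direct differentiation gives $\partial_t F=-F^{\alpha+1}$, $\partial_a F=(F/a)^{\alpha+1}\ge 0$, and, crucially, $\partial_{aa}F\le 0$, i.e. $F(t,\cdot)$ is concave. Hence, using $V_t=\Delta V$,
\[
W_t-\Delta W+W^{\alpha+1}=\partial_t F+\partial_a F\,(V_t-\Delta V)-\partial_{aa}F\,|\nabla V|^{2}+F^{\alpha+1}=-\partial_{aa}F\,|\nabla V|^{2}\ge 0 .
\]

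With this in hand I would run a comparison argument based on Kato's parabolic inequality (Lemma~\ref{kato} and Corollary~\ref{cki}). Setting $z=u-W$ and subtracting $u_t-\Delta u=-u^{\alpha+1}$ from the supersolution inequality yields $z_t-\Delta z\le W^{\alpha+1}-u^{\alpha+1}$. Multiplying by $\mathrm{sgn}^{+}(z)$ and invoking $\mathrm{sgn}^{+}(z)\Delta z\le\Delta z^{+}$ gives $\partial_t z^{+}-\Delta z^{+}\le \mathrm{sgn}^{+}(z)\,(W^{\alpha+1}-u^{\alpha+1})\le 0$, since $r\mapsto r^{\alpha+1}$ is increasing and $z>0$ forces $u>W\ge 0$. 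Thus $z^{+}=(u-W)^{+}$ is a nonnegative subsolution of the linear heat equation vanishing on $\partial\Omega$, so $z^{+}(t)\le e^{(t-\tau)\Delta}z^{+}(\tau)$ for all $0<\tau<t$.

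The main obstacle is passing to the limit $\tau\to 0$, since the data is only taken in $L^1_{\mathrm{loc}}$ and $V(\tau)$ may be singular as $\tau\to 0$. Here I would use that $V(\tau)\to u_0$ and $W(\tau)=F(\tau,V(\tau))\to u_0$ in $L^1_{\mathrm{loc}}$ (dominated by $0\le W(\tau)\le V(\tau)$), together with $u(\tau)\to u_0$, so that $z^{+}(\tau)\to 0$ in $L^1_{\mathrm{loc}}$; combined with the uniform domination $z^{+}(\tau)\le u(\tau)\le V(\tau)$ and the Chapman--Kolmogorov identity $\int k_{t-\tau}(x,y)k_{\tau}(y,w)\,dy=k_{t}(x,w)$ to control the spatial tail, one obtains $e^{(t-\tau)\Delta}z^{+}(\tau)(x)\to 0$ for each fixed $(t,x)$. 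Therefore $z^{+}\equiv 0$, i.e. $u\le W$, which is precisely \eqref{upperestimates4}.
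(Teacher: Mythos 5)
Your supersolution computation is correct and appealing: with $V=e^{t\Delta}u_0$ and $W=F(t,V)$, the identity $W_t-\Delta W+W^{\alpha+1}=-\partial_{aa}F\,|\nabla V|^2\ge 0$ does hold wherever $V$ is a finite classical solution of the heat equation, because $F(t,\cdot)$ is concave. But the proof does not close, and the gap is exactly where you located it: the limit $\tau\to 0$. Your argument needs $z^{+}(\tau)=(u(\tau)-W(\tau))^{+}\to 0$, and for that you invoke $u(\tau)\to u_0$ in $L^1_{\mathrm{loc}}$. This is \emph{not} a hypothesis of the theorem: the only link between $u$ and $u_0$ is the integral equation \eqref{nheqint} itself, so you would have to prove that the Duhamel term $I(\tau)=\int_0^\tau e^{(\tau-s)\Delta}(u(s)^{\alpha+1})\,ds$ tends to $0$. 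That is not routine: unlike $\int_0^\tau e^{(t-s)\Delta}(u(s)^{\alpha+1})\,ds$ with $t$ fixed (which tends to $0$ a.e.\ by dominated convergence, the full integral being finite), in $I(\tau)$ the kernel time $\tau-s$ moves with $\tau$, and on an arbitrary domain $\Omega$ there is no usable pointwise comparison between $k_{\tau-s}$ and $k_{t-s}$; the same geometric issues affect your claim that $V(\tau)\to u_0$ with domination (the ``dominating'' function $V(\tau)$ changes with $\tau$, so this is a Pratt-type argument whose hypotheses must themselves be verified). A secondary error: the bound is not ``trivial'' at points where $V(t,x)=\infty$; there the assertion becomes $u(t,x)\le(\alpha t)^{-1/\alpha}$, i.e.\ precisely \eqref{upperbd}, which requires proof (one can in fact show that if \eqref{nheqint} holds with finite terms a.e., then $V$ is finite and smooth on all of $(0,T)\times\Omega$, but that too is an argument you would need to supply, and your comparison step tacitly relies on it).

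The gap is repairable inside your framework without any initial-trace result: since $F(\tau,\cdot)$ is increasing and $F(\tau,a)\ge a-\tau a^{\alpha+1}$ (convexity of $x\mapsto(1+\alpha x)^{-1/\alpha}$), and since $u(\tau)\le V(\tau)$ by \eqref{nheqint}, one gets $W(\tau)\ge F(\tau,u(\tau))\ge u(\tau)-\tau\,u(\tau)^{\alpha+1}$, hence $z^{+}(\tau)\le\tau\,u(\tau)^{\alpha+1}$ pointwise and $e^{(t-\tau)\Delta}z^{+}(\tau)(x)\le\tau\,h(\tau)$ with $h(s)=e^{(t-s)\Delta}(u(s)^{\alpha+1})(x)$; as $h\in L^1(0,t)$ by hypothesis, $\liminf_{\tau\to0}\tau h(\tau)=0$, and taking $\tau\to0$ along such a sequence in your inequality $z^{+}(t)\le e^{(t-\tau)\Delta}z^{+}(\tau)$ gives $z^{+}(t,x)=0$. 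Even so, note how much lighter the paper's own proof is: it sets $G(t)=e^{(\tau-t)\Delta}u(t)$, reads off from \eqref{nheqint} that $G'(t)=-e^{(\tau-t)\Delta}\left(u(t)^{\alpha+1}\right)\le -G(t)^{\alpha+1}$ by Jensen's inequality (the kernel measure is sub-Markovian and $r\mapsto r^{\alpha+1}$ is convex with value $0$ at $0$), and integrates this ODE inequality, setting $t=\tau$ at the end. That route never differentiates $V$ or $u$ in space, needs no parabolic regularity, no Kato inequality, no maximum principle on an unbounded irregular domain, and no limit at $t=0$ --- which is exactly why it works for arbitrary $\Omega$ and arbitrary nonnegative $u_0\in L^1_{\mathrm{loc}}(\Omega)$.
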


\begin{proof}
Fix $0 < \tau < T$, and set
\begin{equation}
\label{defG}
G(t) = e^{(\tau-t)\Delta}u(t)
= e^{\tau\Delta} u_0 - \int_0^t e^{(\tau-s)\Delta}\left( u(s)^{\alpha+1}\right) ds
\end{equation}
for all $0 \le t \le \tau$.  It is clear from the integral expression in \eqref{defG} that $G:[0,\tau] \to C_0(\Omega)$ is a continuous, decreasing function, with $G(0) = e^{\tau\Delta} u_0$ and $G(\tau) = u(\tau)$.  Furthermore,
 $G:(0,\tau] \to C_0(\Omega)$ is continuously differentiable, and
\begin{equation*}
G'(t) = - e^{(\tau - t)\Delta}\left( u(t)^{\alpha+1}\right)
= -\int_\Omega k_{\tau - t}(\cdot,y)  u(t,y)^{\alpha+1} dy.
\end{equation*}
Since for all $x \in \Omega$ the measure $k_{\tau - t}(x,y)dy$ on $\Omega$, has total mass less than or equal to $1$,
Jensen's inequality implies that
\begin{align*}
G'(t) &=  -\int_\Omega k_{\tau - t}(\cdot,y)  u(t,y)^{\alpha+1} dy\\
& \le - \left(\int_\Omega k_{\tau - t}(\cdot,y)  u(t,y) dy\right)^{\alpha+1}\\
& = - \left( e^{(\tau - t)\Delta}u(t)\right)^{\alpha+1}\\
& = - G(t)^{\alpha+1}.
\end{align*}
Integrating this last differential inequality on $[0,t]$ we obtain
\begin{equation*}
G(t)\leq \frac{1} {\left(G(0)^{-\alpha}+\alpha t\right)^{1/\alpha}},
\end{equation*}
which is the same as
\begin{equation*}
e^{(\tau-t)\Delta}u(t)\leq \frac{1} {\left (\left (e^{\tau\Delta} u_0\right)^{-\alpha}+\alpha t\right)^{1/\alpha}}.
\end{equation*}
This is true for $0 < \tau < T$ and $0 \le t \le \tau$.
The result follows by setting $t = \tau > 0$.
\end{proof}

\begin{remark}
\label{generalisation}
Using an argument similar to the above, one can obtain an analogous estimate for positive solutions of
the more general equation
$$u_t=\Delta u-f(u),$$
where $f$ is a  positive, convex,  increasing $C^2$ function in $(0,\infty)$ such that $F(s)=\int_s^\infty {1\over f(\sigma)}d\sigma<\infty$ for all $s>0.$  Precisely, we have
$$u(t)\leq F^{-1}\left(F(e^{t\Delta} u_0)+t\right),$$
where $F^{-1}$ is the inverse function of $F.$
\end{remark}

\section{Self-similar asymptotic behavior on sectors}
\label{criticalsec}

In this section we consider equation \eqref{nheq} in the case $2/\alpha=\gamma+m$.
Let $u_0\in \X$ and set $u(t) =  \S_m (t) u_0$. Using \eqref{invpropsmgtime} we can re-write the definition \eqref{omg} of the $\omega$-limits set $\mathcal{Q}^{\gamma}(u_0)$
in the following equivalent form,
\begin{equation}  \label{omgnew}
\mathcal{Q}^{\gamma}(u_0) = \left\{f \in C_0(\Omega_m); \;
\exists \lambda_n \to \infty \ \text{such that}\ \lim_{n\to \infty}  \| \S_m(1)
D_{\lambda_n}^{2/\alpha}  u_0 - f\|_{L^\infty(\Omega_m)} = 0\right\}.
\end{equation}

We begin by proving the Theorem \ref{inihomasysol} which corresponds to the particular case when $\mathcal{Q}^{\gamma}(u_0)$ contains one nontrivial element.

\begin{proof}[\bf Proof of Theorem \ref{inihomasysol}]
Using limits in the sense of $\Dd$, we have
 \[ D_\mu^{\gamma+m} \varphi = D_\mu^{\gamma+m} (\underset{\lambda \to \infty}{\lim}
D_{ \lambda}^{\gamma+m} \psi) =\underset{\lambda \to \infty}{\lim} D_{\mu }^{\gamma+m}
D_{ \lambda}^{\gamma+m} \psi
= \underset{\lambda \to \infty}{\lim} D_{\mu \lambda}^{\gamma+m} \psi =\varphi,\]
for all
$\mu >0$. It follows that $\varphi$  is homogeneous of degree $-(\gamma+m)$.
By uniqueness of solutions of \eqref{nheq}, we deduce that the corresponding solution
 $U(t)=\S_m(t) \varphi$ is self-similar.
By Theorem \ref{depcont}, we have
\[\lim_{\lambda \to \infty}  \S_m(t) D_{\lambda}^{\gamma+m} \psi = \S_m(t) \varphi ,\]
in $C_0(\Omega_m)$, for all $t>0$. From \eqref{invpropsmgtime}, and Theorem~\ref{exist}(iii), we obtain
\[\lim_{\lambda \to \infty}  \Gamma_{\lambda}^{\gamma+m} \S_m(\cdot) \psi = U(\cdot) ,\]
in $C([\tau,t]; C_0(\Omega_m)$, for all $0 < \tau < t$, so that $\S_m(t)\psi$ is
asymptotically self-similar to the self-similar solution
$U(t)$.
\end{proof}

We give now the proof of Theorem \ref{relomg}.

\begin{proof}[\bf Proof of Theorem \ref{relomg}]
 Let $u_0\in \X$ and $M>0$ be such that $M>\| u_0 \|_{ \X}$.
 If $f \in \mathcal S_m(1)  \mathcal{Z}^{\gamma}(u_0)$, there exists $z\in  \mathcal{Z}^{\gamma}(u_0)$
such that $f=  S_m(1) z$. Since $z\in  \mathcal{Z}^{\gamma}(u_0)$,
there exists $ \lambda_n \to \infty$ such that
$D_{\lambda_n}^{{\gamma+m}}u_0 \to z$ in $\Bs$. We deduce,
by Theorem \ref{depcont}, that
$\S_m(1) D_{\lambda_n}^{{\gamma+m}}u_0 \to \S_m(1) z=f$
in $C_0(\Omega_m)$. Then $f \in  \mathcal{Q}^{\gamma}(u_0) $ and so
$\S_m(1) \mathcal{Z}^{\gamma}(u_0) \subset  \mathcal{Q}^{\gamma}(u_0)$.

Conversely, if $f \in  \mathcal{Q}^{\gamma}(u_0)$, then there exists $\lambda_n \to
\infty \ \text{such that}\ \S_m(1) D_{\lambda_n}^{{\gamma+m}}
u_0 \to f$ on $C_0(\Omega_m)$. Since $\Bs$ is compact, there exist a
subsequence $(\lambda_{n_k})_{k\geq1}$ such that $
D_{\lambda_{n_k}}^{{\gamma+m}}  u_0 \to w$ on $\Bs$. Again by Theorem \ref{depcont},
$\S_m(1) D_{\lambda_{n_k}}^{{\gamma+m}}  u_0 \to \S_m(1)
w$ on $C_0(\Omega_m)$, as $k\to \infty$. Therefore $f=  \S_m(1)w \in
\S_m(1) \mathcal{Z}^{\gamma}(u_0)$. This proves the result.
\end{proof}

\begin{proof}[\bf Proof of Corollary \ref{univsol}]
This follows immediately from  \cite[Theorem 1.4, p. 345]{MTW} and Theorem \ref{relomg}.
\end{proof}

\section{Linear asymptotic behavior on sectors}
\label{linearsec}

In this section, we study the long-time asymptotic behavior of solutions to \eqref{nheq}
in the case  $2/\alpha < \gamma+m$.  The key point is that under the dilations $D_{\sqrt t}^{\gamma+m}$, which preserve
the norm of $\X$, the integral term in \eqref{nheq} decays faster than the difference between the
two other terms.  This is the content of the next proposition.

\begin{proposition} \label{asymlin}
Let $m\in\{1,\cdots,N\}$, $0< \gamma<N$ and let $\alpha>0$ be such that
\[
       \alpha>\frac{2}{\gamma+m}.
\]
Let $u_0\in \X$ and $u(t)=S_m(t)u_0$. It follows that
\[
         D_{\sqrt t}^{\gamma+m} \left(u(t)-e^{t \Delta_m} u_0 \right) \to 0 ,
\]
in $C_0(\Omega_m)$, as $t\to \infty$.
\end{proposition}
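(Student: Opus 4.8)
The plan is to estimate directly the integral term in the Duhamel formula \eqref{nheqINTE}, apply the norm-preserving dilation $D_{\sqrt t}^{\gamma+m}$, and show the resulting quantity tends to $0$ in $C_0(\Omega_m)$ as $t\to\infty$. By Theorem~\ref{exist}(v), the difference $u(t)-e^{t\Delta_m}u_0$ equals $-\int_0^t e^{(t-s)\Delta_m}(|u(s)|^\alpha u(s))\,ds$, so the whole statement reduces to showing that this integral, after dilation, decays. The key mechanism is the one flagged in the introduction: since $\alpha>2/(\gamma+m)$, the homogeneity of the nonlinearity is stronger than that preserved by $D_{\sqrt t}^{\gamma+m}$, so the integral term carries an extra positive power of $t$ that forces the decay.

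First I would recall the pointwise bound already proved inside Theorem~\ref{exist}, namely estimate \eqref{intterm}:
\begin{equation*}
\Big| \int_0^t e^{(t-s)\Delta_m}\big(|u(s)|^\alpha u(s)\big)\,ds \Big|
\le C\, t^{\epsilon}\, x_1\cdots x_m\,(t+|x|^2)^{-\frac{\gamma'+2m}{2}},
\end{equation*}
valid for a suitable $0<\epsilon<1$ and $\gamma<\gamma'<N$ chosen as in that proof. This is exactly the gain I want: the decay exponent is governed by $\gamma'>\gamma$ rather than $\gamma$. Applying the dilation operator $D_{\sqrt t}^{\gamma+m}$, which multiplies by $t^{(\gamma+m)/2}$ and rescales $x\mapsto \sqrt t\,x$, I would compute the sup over $x\in\Omega_m$ of the dilated expression. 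Because $(\sqrt t\,x_1)\cdots(\sqrt t\,x_m)=t^{m/2}x_1\cdots x_m$ and $(t+t|x|^2)^{-(\gamma'+2m)/2}=t^{-(\gamma'+2m)/2}(1+|x|^2)^{-(\gamma'+2m)/2}$, collecting the powers of $t$ gives an overall factor of the form
\begin{equation*}
t^{\frac{\gamma+m}{2}+\epsilon+\frac{m}{2}-\frac{\gamma'+2m}{2}}
= t^{\epsilon-\frac{\gamma'-\gamma}{2}}.
\end{equation*}
The remaining $x$-dependent factor $x_1\cdots x_m(1+|x|^2)^{-(\gamma'+2m)/2}$ is a fixed bounded function on $\Omega_m$, lying in $C_0(\Omega_m)$, so the $L^\infty$ norm is controlled by $C\,t^{\epsilon-(\gamma'-\gamma)/2}$.

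The decisive step is therefore to verify that the exponent $\epsilon-\tfrac{\gamma'-\gamma}{2}$ is strictly negative, so that the bound vanishes as $t\to\infty$. This is where I expect the main obstacle to lie: one must check that the $\epsilon$ and $\gamma'$ produced in the proof of Theorem~\ref{exist} can be chosen compatibly with this decay requirement, using the hypothesis $\alpha>2/(\gamma+m)$. Recall that in that proof one sets $(\gamma+m)(1+\alpha\epsilon)=\gamma'+m$, whence $\gamma'-\gamma=\alpha\epsilon(\gamma+m)$, and so the exponent becomes $\epsilon\big(1-\tfrac{\alpha(\gamma+m)}{2}\big)$. Since $\alpha(\gamma+m)>2$, the bracket is strictly negative, and as $\epsilon>0$ the exponent is negative; this is precisely the place where the assumption $\alpha>2/(\gamma+m)$ is used in an essential way. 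Finally, since each dilated integrand lies in $C_0(\Omega_m)$ and is bounded uniformly in $L^\infty$ by a quantity tending to $0$, convergence in $C_0(\Omega_m)$ follows. I would close by noting that the decomposition is exactly $D_{\sqrt t}^{\gamma+m}(u(t)-e^{t\Delta_m}u_0)=-D_{\sqrt t}^{\gamma+m}\int_0^t e^{(t-s)\Delta_m}(|u(s)|^\alpha u(s))\,ds$, so the estimate above directly yields the claimed limit.
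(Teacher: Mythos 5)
Your proof is correct and takes essentially the same route as the paper: both rest on the Duhamel formula, the estimate $|u(s)|^{\alpha+1}\le C s^{-(1-\epsilon)}x_1\cdots x_m(s+|x|^2)^{-\frac{\gamma'+2m}{2}}$ with $\gamma'+m=(\gamma+m)(1+\alpha\epsilon)$ combined with Corollary~\ref{esttaunew}, and the decisive exponent computation $\epsilon\left(1-\frac{\alpha(\gamma+m)}{2}\right)<0$. The only difference is organizational — the paper substitutes $s=\sigma t$ and commutes $D_{\sqrt t}^{\gamma+m}$ through the semigroup via \eqref{dilsmg} before estimating, whereas you apply the dilation directly to the already-established bound \eqref{intterm} — and the resulting powers of $t$ are identical.
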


\begin{proof}
We know that, for all $t> 0$,
\[
     u(t)-e^{t \Delta_m} u_0 = - \int_0^t e^{(t-s) \Delta_m}\left( |u(s)|^\alpha u(s)\right) ds = -
t\int_0^1  e^{t(1-\sigma) \Delta_m}\left( |u(\sigma t)|^\alpha u(\sigma t)\right) d\sigma .
\]
Therefore, using \eqref{dilsmg}, we have
\begin{eqnarray*}
     D_{\sqrt t}^{\gamma+m} \left(u(t)-e^{t \Delta_m} u_0 \right)
&=& - t\int_0^1  D_{\sqrt t}^{\gamma+m} e^{t(1-\sigma) \Delta_m}\left( |u(\sigma t)|^\alpha u(\sigma t)\right) d\sigma \\
&=& - t\int_0^1   e^{(1-\sigma) \Delta_m}\left(D_{\sqrt t}^{\gamma+m} |u(\sigma t)|^\alpha u(\sigma t)\right) d\sigma .
\end{eqnarray*}
On the other hand, estimating as in \eqref{unalpha}, we see that \[
     |u(\sigma t)|^{\alpha+1} \le\frac{C}{(\sigma t)^{1-\varepsilon}} x_1 \cdots x_m\;(\sigma t+|x|^2)^{-\frac{\gamma'+2m}{2}} ,
\]
for all $0<\varepsilon<\min\left(1, \frac{N-\gamma}{\alpha(\gamma+m)} \right)$,
where $\gamma<\gamma'<N$ satisfies $\gamma-\gamma' = -\alpha\varepsilon(\gamma+m)$. Therefore,
\[
 D_{\sqrt t}^{\gamma+m} |u(\sigma t)|^{\alpha+1} = t^{\frac{\gamma+m}{2}} |u(\sigma t,\sqrt t x)|^{\alpha+1} \le\frac{C}{(\sigma t)^{1-\varepsilon}} t^{\frac{\gamma-\gamma'}{2}} \; x_1\cdots x_m\;  (\sigma +|x|^2)^{-\frac{\gamma'+2m}{2}} .
\]
By Corollary \ref{esttaunew}, we deduce that
\[
e^{(1-\sigma)\Delta_m}  \left( D_{\sqrt t}^{\gamma+m}  |u( \sigma t)|^{\alpha+1} \right)   \le \frac{C}{(\sigma t)^{1-\varepsilon}} t^{\frac{\gamma-\gamma'}{2}} x_1\cdots x_m\;  (1 +|x|^2)^{-\frac{\gamma'+2m}{2}}
\le \frac{C}{(\sigma t)^{1-\varepsilon}} t^{\frac{\gamma-\gamma'}{2}}  .
\]
It follows that
\begin{eqnarray*}
    \left|D_{\sqrt t}^{\gamma+m} \left(u(t)-e^{t \Delta_m} u_0 \right) \right|
\le  t\int_0^1 e^{(1-\sigma) \Delta_m} \left( D_{\sqrt t}^{\gamma+m}  |u( \sigma t)|^{\alpha+1} \right)  d\sigma\\
 \le C t^{\frac{\gamma-\gamma'}{2}+\varepsilon}     \int_0^1 \frac{d\sigma}{\sigma^{1-\varepsilon}}\\
\le C   t^{\varepsilon\left(1-\frac{\alpha(\gamma+m)}{2}\right)}   .
\end{eqnarray*}
Since $\alpha(\gamma+m) >2$, we see that
\[
   D_{\sqrt t}^{\gamma+m} \left(u(t)-e^{t \Delta_m} u_0 \right) \to 0
\]
in $C_0(\Omega_m)$ as $t\to\infty$. This proves the result.
\end{proof}

\begin{proof}[\bf Proof of Theorem \ref{lincase}] The three statements in this theorem follow from Proposition~\ref{asymlin} and, respectively, Corollary 4.2, p. 360, Corollary 1.3, p. 345 and Theorem 1.4, p. 345 in \cite{MTW}.
\end{proof}

\section{Nonlinear asymptotic behavior on sectors}
\label{nonlinsec}

In this section we consider the equation \eqref{nheq} with non-negative initial value $u_0\in\X$
in the case $\alpha<2/(\gamma+m)$, and our goal is to prove Theorem~\ref{nonlincaseCptsector}.
First however, we need to show that the hypothesis on the initial condition $u_0$, which gives a lower bound
for large $|x|$, implies a lower bound on the resulting solution at any fixed positive time.  The key point
is the behavior near the boundary.  We prove the following result.

\begin{proposition}
\label{timepos}
Let $u_0 \in \X$, with $u_0 \ge 0$, and suppose that there exist $\rho > 0$  and $c > 0$ such
that  for all $x \in \Omega_m$ with $|x| \ge \rho$,
\begin{equation}
\label{teeiszero}
u_0(x) \ge c \psi_0(x),
\end{equation}
where $\psi_0$ is given by \eqref{psi0}.
Let $u(t,\cdot) = u(t) = \mathcal S_m(t) u_0$ be the resulting solution of
\eqref{nheq} as given by Theorem~\ref{exist}, and fix any $t_0 > 0$.  It follow that
$v_0 \equiv \mathcal S_m(t_0) u_0$ verifies the condition
\begin{equation}
\label{teezero}
v_0(x) \ge
\begin{cases}
c'x_1x_2\cdots x_m, & 0 < |x| \le 1 ;\\
c'x_1x_2\cdots x_m|x|^{-(\gamma + 2m)}, & |x| \ge 1,
\end{cases}
\end{equation}
for some $c' > 0$, where the constant $c'$ may depend on $t_0$.
\end{proposition}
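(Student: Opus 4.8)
The plan is to reduce the desired lower bound on the nonlinear solution $u(t_0)$ to a lower bound on the linear flow $e^{t_0\Delta_m}u_0$, and then to establish the latter directly from the kernel $K_{t_0}$. The reduction exploits the absorption being controlled, in the regime $\alpha<2/(\gamma+m)$ of this section, by an integrable-in-time sup bound.

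First I would run a Gronwall-type argument parallel to the proof of Theorem~\ref{genupest}. Fix $x$ in the interior of $\Omega_m$ and set $G(t)=e^{(t_0-t)\Delta_m}u(t)$ for $0\le t\le t_0$, so that $G(0)=e^{t_0\Delta_m}u_0$, $G(t_0)=u(t_0)$, and $G'(t)=-e^{(t_0-t)\Delta_m}\bigl(u(t)^{\alpha+1}\bigr)$. Since $0\le u(t)\le\|u(t)\|_{L^\infty}$, the pointwise inequality $u(t)^{\alpha+1}\le\|u(t)\|_{L^\infty}^{\alpha}\,u(t)$ together with the positivity of $e^{(t_0-t)\Delta_m}$ gives $G'(t)\ge-\|u(t)\|_{L^\infty}^{\alpha}\,G(t)$. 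Dividing by $G$ (strictly positive in the interior) and integrating yields
\[
u(t_0,x)=G(t_0)\ge G(0)\exp\!\left(-\int_0^{t_0}\|u(s)\|_{L^\infty}^{\alpha}\,ds\right)=e^{-K}\,e^{t_0\Delta_m}u_0(x),\qquad K:=\int_0^{t_0}\|u(s)\|_{L^\infty}^{\alpha}\,ds.
\]
The crucial point is that $K<\infty$: combining $u(s)\le e^{s\Delta_m}u_0$ from \eqref{uleqlinear} with Lemma~\ref{lemestnew} gives $\|u(s)\|_{L^\infty}\le C\,s^{-(\gamma+m)/2}\|u_0\|_{\X}$, whence $\|u(s)\|_{L^\infty}^{\alpha}\le C\,s^{-\alpha(\gamma+m)/2}$, which is integrable near $s=0$ precisely because $\alpha<2/(\gamma+m)$. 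This step uses only $u_0\ge0$.

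It then remains to prove the linear bound $e^{t_0\Delta_m}u_0(x)\ge c'''\,x_1\cdots x_m\,(1+|x|^2)^{-(\gamma+2m)/2}$ on $\Omega_m$. Since $u_0\ge0$ and $u_0\ge c\psi_0$ on $\{|y|\ge\rho\}$, by positivity it suffices to bound $e^{t_0\Delta_m}w_0$ from below, where $w_0=c\,\psi_0\,\mathbf{1}_{\{|y|\ge\rho\}}$. For bounded $x$ I would use the lower bound on the reflected factors,
\[
e^{-\frac{(x_i-y_i)^2}{4t_0}}-e^{-\frac{(x_i+y_i)^2}{4t_0}}=2e^{-\frac{x_i^2+y_i^2}{4t_0}}\sinh\!\Bigl(\tfrac{x_iy_i}{2t_0}\Bigr)\ge\frac{x_iy_i}{t_0}\,e^{-\frac{x_i^2+y_i^2}{4t_0}},
\]
which produces the explicit factor $x_1\cdots x_m$; integrating $w_0$ over a fixed annulus $\{\rho\le|y|\le2\rho,\ y_i\ge\rho/(2\sqrt N)\}$, on which $w_0$ is bounded below, gives $e^{t_0\Delta_m}w_0(x)\ge c\,x_1\cdots x_m$ for $|x|\le R$ and any fixed $R$.

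The main obstacle is the far field, where the estimate above is useless because its Gaussian factors $e^{-x_i^2/4t_0}$ decay far too fast to reproduce the power $|x|^{-(\gamma+2m)}$. Here I would instead exploit the exact mass identity \eqref{hksgident}. For $|x|\ge2\rho$ one has $\{|y-x|\le|x|/2\}\subset\{|y|\ge\rho\}$ and $\psi_0(y)\ge c_{m,\gamma}\,y_1\cdots y_m\,(3|x|/2)^{-\gamma-2m}$ there, so matters reduce to bounding $\int_{|y-x|\le|x|/2}K_{t_0}(x,y)\,y_1\cdots y_m\,dy$ from below. By \eqref{hksgident} this equals $x_1\cdots x_m$ minus the tail $\int_{|y-x|>|x|/2}K_{t_0}(x,y)\,y_1\cdots y_m\,dy$, which by \eqref{estbstsglin} is dominated, after factoring out $x_1\cdots x_m$, by $t_0^{-m}\,x_1\cdots x_m\int_{|z|>|x|/2}(3|z|)^{2m}G_{t_0}(z)\,dz$, a Gaussian moment tail tending to $0$ as $|x|\to\infty$. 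Hence the tail is at most $\tfrac12 x_1\cdots x_m$ for $|x|\ge R$, giving $e^{t_0\Delta_m}w_0(x)\ge c'\,x_1\cdots x_m\,|x|^{-(\gamma+2m)}$ in the far field. Combining the two regimes yields the linear bound, and together with the first step this gives \eqref{teezero} with $c'=e^{-K}c'''$ (depending on $t_0$), after separating $|x|\le1$ from $|x|\ge1$.
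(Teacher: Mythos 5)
Your proposal is correct, but both halves of it take genuinely different routes from the paper, so a comparison is worthwhile.

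For the nonlinear-to-linear reduction, the paper first reduces by comparison (Theorem~\ref{exist}(vi)) to the truncated datum $c\,\psi_0\,\mathbf{1}_{\{|y|\ge\rho\}}$, which is \emph{bounded}, and then runs the constant-rate exponential trick $w(t)=e^{\mu t}u(t)$ with $\mu=\|u_0\|_{L^\infty(\Omega_m)}^{\alpha}$, obtaining $u(t)\ge e^{-\mu t}e^{t\Delta_m}u_0$ for \emph{every} $\alpha>0$. You instead run a time-dependent Gronwall argument on the full (possibly unbounded) datum $u_0\in\X$, and the finiteness of $K=\int_0^{t_0}\|u(s)\|_{L^\infty}^{\alpha}\,ds$ then genuinely requires $\alpha<2/(\gamma+m)$. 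This is the one caveat: the proposition as stated carries no restriction on $\alpha$, and the paper's proof needs none; your proof covers only the subcritical range, which suffices for its sole application (Theorem~\ref{nonlincaseCptsector}) but is strictly less general. The limitation is easy to remove: perform the nonlinear comparison to the truncated, bounded datum \emph{first} (as the paper does), after which your own Gronwall step gives $K\le t_0\|w_0\|_{L^\infty}^{\alpha}<\infty$ for every $\alpha>0$. Also, to avoid dividing by $G$, it is cleaner to observe that $\frac{d}{dt}\bigl[G(t)\exp\bigl(\int_0^t\|u(s)\|_{L^\infty}^{\alpha}ds\bigr)\bigr]\ge 0$; this sidesteps any positivity issue.

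For the linear lower bound, your near-field argument (the $\sinh$ inequality producing the factor $x_1\cdots x_m$, integrated against the datum on a fixed annulus) is the same as the paper's. Your far-field argument, however, is different and arguably cleaner: the paper restricts the integration to the box-like region $\Omega_m(x)$ of \eqref{subset} and performs a case-by-case analysis of one-dimensional integrals (estimates \eqref{hardest} and \eqref{hardest2}), whereas you use the exact mass identity \eqref{hksgident} to write the relevant integral as $x_1\cdots x_m$ minus a tail, and kill the tail with \eqref{estbstsglin} and a Gaussian moment estimate, since $|y-x|>|x|/2$ forces $y_i\le 3|z|$ with $z=y-x$. Both yield $e^{t_0\Delta_m}w_0(x)\ge C_{t_0}\,x_1\cdots x_m\,|x|^{-\gamma-2m}$ for $|x|\ge R$, and combining with the near field on $|x|\le R$ gives \eqref{teezero}; your version buys a shorter proof at no loss, while the paper's buys explicit, quantitative constants $C_t^1,\dots,C_t^4$ in each regime.
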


We refer the reader to \cite{MS} for results of this type on a general domain.
The present situation differs from that in \cite{MS} in that the sector
$\Omega_m$ does not have the required regularity, and also that here we
include the possibility that $u_0$ could be identically zero on a bounded
subset of  $\Omega_m$.  Unlike \cite{MS}, our proof makes use of the explicit form of
the kernel for the heat semigroup on $\Omega_m$.

\begin{proof}
We first note that it suffices by comparison to prove this for the specific initial value
\begin{equation}
\label{cutoffinit}
u_0(x) = \begin{cases}
0, &x \in \Omega_m, |x| < \rho ;\\
c \psi_0(x),  &x \in \Omega_m, |x| \ge \rho,
\end{cases}
\end{equation}
where $\psi_0$ is given by \eqref{psi0}, and $\rho > 0$ is arbitrary. To accomplish this, we first prove that
for any fixed $t_0 > 0$, $v_0 = e^{t_0\Delta_m}u_0$ verifies \eqref{teezero},
where $u_0$ is given by \eqref{cutoffinit}.  For this purpose, since the estimate is linear, the value of
$c > 0$ in \eqref{cutoffinit} is of no importance.

Thus, we consider $e^{t\Delta_m}u_0$ on $\Omega_m$ given by \eqref{sgomg} and \eqref{hksec}, where $u_0$ is given by \eqref{cutoffinit}.
Using the fact that $e^s - e^{-s} \ge 2s$ for all $s \ge 0$, we see that if $x, y \in \Omega_m$ and $1 \le i \le m$ (so that $x_i \ge 0$ and $y_i \ge 0$), then
\begin{equation}
\label{kernelest}
e^{-\frac{|x_i-y_i|^2}{4t}}-e^{-\frac{|x_i+y_i|^2}{4t}}
= e^{-\frac{|x_i|^2}{4t}}e^{-\frac{|y_i|^2}{4t}}\left[e^{\frac{x_iy_i}{2t}}-e^{-\frac{x_iy_i}{2t}} \right]
\ge \left[\frac{x_iy_i}{t}\right]e^{-\frac{|x_i|^2}{4t}}e^{-\frac{|y_i|^2}{4t}}
\ge \left[\frac{x_iy_i}{t}\right]e^{-\frac{|x_i|^2}{2t}}e^{-\frac{|y_i|^2}{2t}}
\end{equation}
In addition, for $m+1 \le j \le N$, we have (since $(s-r)^2 \le 2s^2 + 2r^2$),
\begin{equation*}
e^{-\frac{|x_j-y_j|^2}{4t}}
\ge e^{-\frac{|x_j|^2}{2t}}e^{-\frac{|y_j|^2}{2t}}.
\end{equation*}

It follows that
\begin{align}
\nonumber e^{t\Delta_m}u_0(x) &= \int_{\Omega_m}K_t(x,y) u_0(y) dy
\\
\nonumber&=  (4\pi t)^{-\frac{N}{2}}\int_{\Omega_m} \displaystyle \prod_{j=m+1}^{N}
e^{-\frac{|x_j-y_j|^2}{4t}} \prod_{i=1}^{m}
\left[e^{-\frac{|x_i-y_i|^2}{4t}}-e^{-\frac{|x_i+y_i|^2}{4t}} \right]u_0(y) dy
\\
\nonumber& \ge c  x_1x_2\cdots x_m t^{-m}(4\pi t)^{-\frac{N}{2}}e^{-\frac{|x|^2}{2t}}
\int_{\Omega_m} y_1y_2\cdots y_m e^{-\frac{|y|^2}{2t}} u_0(y) dy
\\
&= cx_1x_2\cdots x_m t^{-m}(4\pi t)^{-\frac{N}{2}}e^{-\frac{|x|^2}{2t}}\int_{\substack{y \in \Omega_m\\|y|\ge \rho}}
y_1^2y_2^2\cdots y_m^2 e^{-\frac{|y|^2}{2t}} |y|^{-\gamma -2m} dy. \label{smallx}
\end{align}

This shows in particular that for any $t > 0$,  $e^{t\Delta_m}u_0$ satisfies \eqref{teezero}, but only on any give bounded set in $\Omega_m$.

We turn our attention to the case where $|x|$ is large.

For $x \in \Omega_m$, let
\begin{equation}
\label{subset}
\Omega_m(x) = \{y \in \Omega_m : 0 < x_i \le y_i \le \max[2x_i,2], 1 \le i \le m, 0 < |x_i| \le |y_i| \le \max[2|x_i|,2], m < i \le N \}.
\end{equation}
If $y \in \Omega_m(x)$, then
\begin{equation*}
|y|^2 = \sum_{i = 1}^N y_i^2 \le \sum_{i = 1}^N \max[2|x_i|,2]^2
\le  \sum_{i = 1}^N (4x_i^2 + 4) = 4|x|^2 + 4N.
\end{equation*}
Since this calculation is for large $|x|$ we may suppose that
\begin{equation}
\label{lowerbd1}
|x|^2 \ge N,
\end{equation}
and so we see that
\begin{equation}
y \in \Omega_m(x) \implies |y| \le 2\sqrt2 |x| \le 4|x|.
\end{equation}

Also, we want to use the specific formula in \eqref{cutoffinit}, so we impose
\begin{equation}
\label{lowerbd2}
|x| \ge \rho,
\end{equation}
where $\rho$ is as in \eqref{cutoffinit}.  Hence
\begin{equation}
y \in \Omega_m(x) \implies |y| \ge \rho.
\end{equation}

We can now calculate.
\begin{align}
e^{t\Delta_m}u_0(x) &= \int_{\Omega_m}K_t(x,y) u_0(y) dy \ge \int_{\Omega_m(x)}K_t(x,y) u_0(y) dy \nonumber
\\
&=\int_{\Omega_m(x)}K_t(x,y) y_1y_2\cdots y_m  |y|^{-\gamma -2m} dy \nonumber
\\
&\ge 4^{-\gamma - 2m}  |x|^{-\gamma -2m} \int_{\Omega_m(x)}K_t(x,y)  y_1y_2\cdots y_mdy \nonumber
\\
&=  4^{-\gamma - 2m}  |x|^{-\gamma -2m}(4\pi t)^{-\frac{N}{2}}
\left(\prod_{i =1}^{m}\int_{x_i}^{\max[2x_i,2]}
\left[e^{-\frac{|x_i-y_i|^2}{4t}}-e^{-\frac{|x_i+y_i|^2}{4t}} \right] y_idy_i\right) \nonumber \\
&\hspace{1cm} \times \left(\prod_{i =1+m}^{N}\int_{{|x_i|}\leq |y_i|\leq {\max[2|x_i|,2]}}
e^{-\frac{|x_i-y_i|^2}{4t}} dy_i\right) \nonumber
\\
&=  4^{-\gamma - 2m}  |x|^{-\gamma -2m}(4\pi t)^{-\frac{N}{2}}
\left(\prod_{i =1}^{m}\int_{x_i}^{\max[2x_i,2]}
\left[e^{-\frac{|x_i-y_i|^2}{4t}}-e^{-\frac{|x_i+y_i|^2}{4t}} \right] y_idy_i\right)  \label{firstest}\\
&\hspace{1cm} \times \left(\prod_{i =1+m}^{N}\int_{|x_i|}^{\max[2|x_i|,2]}
\left[e^{-\frac{|x_i-y_i|^2}{4t}}+e^{-\frac{|x_i+y_i|^2}{4t}}\right] dy_i\right). \nonumber
\end{align}

 We first  need to examine the integral
\begin{equation*}
\int_{x_i}^{\max[2x_i,2]}
\left[e^{-\frac{|x_i-y_i|^2}{4t}}-e^{-\frac{|x_i+y_i|^2}{4t}} \right] y_idy_i
\end{equation*}
for  $1\leq i\leq m,$ under two different circumstances, $0 < x_i < 1$ and $x_i \ge 1$.  Consider first the case $x_i \ge 1$.  We have,
since $y_i \ge x_i$,
\begin{align*}
\int_{x_i}^{\max[2x_i,2]}\left[e^{-\frac{|x_i-y_i|^2}{4t}}-e^{-\frac{|x_i+y_i|^2}{4t}} \right] y_i dy_i
& \ge x_i\int_{x_i}^{2x_i}\left[e^{-\frac{|x_i-y_i|^2}{4t}}-e^{-\frac{|x_i+y_i|^2}{4t}} \right] dy_i
\\
&= x_i\int_{0}^{x_i}\left[e^{-\frac{|y_i|^2}{4t}}-e^{-\frac{|2x_i+y_i|^2}{4t}} \right] dy_i
\\
&\ge x_i\int_{0}^{x_i}\left[e^{-\frac{|y_i|^2}{4t}}-e^{-\frac{|2x_i|^2}{4t}} e^{-\frac{|y_i|^2}{4t}}\right] dy_i
\\
&= x_i\int_{0}^{x_i}e^{-\frac{|y_i|^2}{4t}}\left[1 - e^{-\frac{|2x_i|^2}{4t}} \right] dy_i
\\
&\ge x_i\left[1 - e^{-\frac{1}{t}} \right]\int_{0}^1e^{-\frac{|y_i|^2}{4t}} dy_i
\\
& = C^1_t x_i.
\end{align*}
We next consider the case $x_i \le 1$.  We have, by \eqref{kernelest},
\begin{align*}
\int_{x_i}^{\max[2x_i,2]}\left[e^{-\frac{|x_i-y_i|^2}{4t}}-e^{-\frac{|x_i+y_i|^2}{4t}} \right] y_i dy_i
& \ge \int_1^2\left[\frac{x_iy_i}{t}\right]e^{-\frac{|x_i|^2}{2t}}e^{-\frac{|y_i|^2}{2t}}y_i dy_i
 \\
& = x_ie^{-\frac{|x_i|^2}{2t}}\int_1^2\left[\frac{y_i^2}{t}\right]e^{-\frac{|y_i|^2}{2t}} dy_i
\\
& \ge x_ie^{-\frac{1}{2t}}\int_1^2\left[\frac{y_i^2}{t}\right]e^{-\frac{|y_i|^2}{2t}} dy_i
\\
& = C^2_t x_i.
\end{align*}

We  second need to examine the integral
\begin{equation*}
\int_{|x_i|}^{\max[2|x_i|,2]}
\left[e^{-\frac{|x_i-y_i|^2}{4t}}+e^{-\frac{|x_i+y_i|^2}{4t}} \right] dy_i
\end{equation*}
for $m+1\leq i\leq N,$ under two different circumstances, $0 < |x_i| < 1$ and $|x_i| \ge 1$.

Consider first the case $|x_i| \ge 1$.  We have, if in addition  $x_i<0,$ that is $-x_i\geq 1,$
\begin{align*}
\int_{|x_i|}^{\max[2|x_i|,2]}\left[e^{-\frac{|x_i-y_i|^2}{4t}}+e^{-\frac{|x_i+y_i|^2}{4t}} \right]  dy_i
&=\int_{-x_i}^{-2x_i}\left[e^{-\frac{|x_i-y_i|^2}{4t}}+e^{-\frac{|x_i+y_i|^2}{4t}} \right]  dy_i\\
& \geq \int_{-x_i}^{-2x_i} \left[e^{-\frac{|x_i+y_i|^2}{4t}} \right] dy_i\\
&=\int_{0}^{-x_i}e^{-\frac{|y_i|^2}{4t}}  dy_i\\
& \ge \int_{0}^{1}e^{-\frac{|y_i|^2}{4t}}  dy_i
\\
& = C^3_t .
\end{align*}
We have, if  in addition $x_i>0,$ that is $x_i\geq 1,$
\begin{align*}
\int_{|x_i|}^{\max[2|x_i|,2]}\left[e^{-\frac{|x_i-y_i|^2}{4t}}+e^{-\frac{|x_i+y_i|^2}{4t}} \right]  dy_i
&=\int_{x_i}^{2x_i}\left[e^{-\frac{|x_i-y_i|^2}{4t}}+e^{-\frac{|x_i+y_i|^2}{4t}} \right]  dy_i\\
&\geq \int_{x_i}^{2x_i}\left[e^{-\frac{|x_i-y_i|^2}{4t}} \right]  dy_i\\
&=\int_{0}^{x_i}e^{-\frac{|y_i|^2}{4t}}   dy_i\\
& \ge \int_{0}^{1}e^{-\frac{|y_i|^2}{4t}} dy_i
\\
& = C^3_t .
\end{align*}

We next consider the case $|x_i| \le 1$.  By the inequality, $e^{-\frac{|x_j-y_j|^2}{4t}}
\ge e^{-\frac{|x_j|^2}{2t}}e^{-\frac{|y_j|^2}{2t}},$ we have
\begin{align*}
\int_{|x_i|}^{\max[2|x_i|,2]}\left[e^{-\frac{|x_i-y_i|^2}{4t}}+e^{-\frac{|x_i+y_i|^2}{4t}} \right]dy_i
 &\ge \int_1^2\left[e^{-\frac{|x_i-y_i|^2}{4t}} \right] dy_i \\
& \ge \int_1^2e^{-\frac{|x_i|^2}{2t}}e^{-\frac{|y_i|^2}{2t}} dy_i
\\
& \ge e^{-\frac{1}{2t}}\int_1^2e^{-\frac{|y_i|^2}{2t}} dy_i
\\
& = C^4_t.
\end{align*}

In all cases, we have
\begin{equation}
\label{hardest}
\int_{x_i}^{\max[2x_i,2]}
\left[e^{-\frac{|x_i-y_i|^2}{4t}}-e^{-\frac{|x_i+y_i|^2}{4t}} \right] y_idy_i \ge C_t x_i
\end{equation}
for $1\leq i\leq m$ and
\begin{equation}
\label{hardest2}
\int_{|x_i|}^{\max[2|x_i|,2]}
\left[e^{-\frac{|x_i-y_i|^2}{4t}}+e^{-\frac{|x_i+y_i|^2}{4t}} \right] dy_i \ge C_t
\end{equation}
for $m+1\leq i\leq N,$ whenever $x \in \Omega_m$.

It therefore follows from \eqref{firstest}, \eqref{hardest}, \eqref{hardest2} that, if $x \in \Omega_m$, then
\begin{equation}
\label{largex}
e^{t\Delta_m}u_0(x) \ge C_t x_1x_2 \cdots x_m|x|^{-\gamma -2m}, \, |x| \ge \max[\sqrt N,\rho].
\end{equation}
Combining \eqref{smallx} and \eqref{largex}, we obtain that for any fixed $t > 0$,
$e^{t\Delta_m}u_0$ satisfies \eqref{teezero}.

We next show the same result for $u(t,\cdot) = u(t) = \mathcal S_m(t) u_0$ be the resulting solution of
\eqref{nheq}, where $u_0$ is given by \eqref{cutoffinit}. To do so, set
$w(t) = e^{\mu t}u(t)$, where
$\mu = [c\rho^{\gamma + m}]^\alpha \ge \|u_0\|_{L^\infty(\Omega_m)}^\alpha$.
Since $u(t) \le \|u_0\|_{L^\infty(\Omega_m)}$ for all $t > 0$, we have  $u(t)^\alpha \le \mu$ for all $t > 0$.
It follows that
\begin{equation*}
w'(t) = e^{\mu t}u'(t) + e^{\mu t}\mu u(t) \ge e^{\mu t}u'(t) + e^{\mu t}u(t)^\alpha u(t) = e^{\mu t}\Delta u(t) = \Delta w(t).
\end{equation*}
Hence $w(t) \ge e^{t\Delta_m}w(0) = e^{t\Delta_m}u_0$. In other words $u(t) \ge e^{-\mu t}e^{t\Delta_m}u_0$,
which implies the desired result.
\end{proof}

\begin{remark}
\label{existLinf}
In addition to being well-posed in $C_0(\Omega_m)$, in $L^q(\Omega_m)$ for $1 \le q < \infty$, as noted in the introduction, and in $\X$, as per Theorem~\ref{exist}, equation \eqref{nheq} is globally well-posed in $L^\infty(\Omega_m)$ in the following sense.  For every $u_0 \in L^\infty(\Omega_m)$, there is a unique solution $u \in C((0,\infty); C_0^{b,u}(\Omega_m))$
of the integral equation \eqref{nheqINTE}, where $C_0^{b,u}(\Omega_m)$ denotes the closed subspace of $L^\infty(\Omega_m)$
of bounded, uniformly continuous functions on $\Omega_m$ which are zero on $\partial\Omega_m$, but not necessarily as $|x| \to \infty$. This solution has the following additional properties: the function $u$ is a classical solution of \eqref{nheq} on
$(0,\infty) \times \Omega_m$, $\|u(t) - e^{t\Delta_m}u_0\|_{L^\infty(\Omega_m)} \to 0$ as $t \to 0$, and $|u(t)| \le (\alpha t)^{1/\alpha}$, for all $t > 0$.  One way to see this is first to establish the corresponding result on $L^\infty(\R^N)$, but of course with $C^{b,u}(\R^N)$ instead of $C_0^{b,u}(\Omega_m)$, and then to restrict to anti-symmetric functions on $\R^N$.  The result on $\R^N$ follows from standard arguments, i.e. contraction mapping, parabolic regularity, and comparison.  We refer the reader to Appendices B and C of \cite{CDNW} for detailed information about $e^{t\Delta}$ on $C^{b,u}(\R^N)$.  In particular,
\cite[Lemma B.1]{CDNW} establishes that $e^{t\Delta}h \in C^{b,u}(\R^N)$ for all $h \in L^\infty(\R^N)$ and
\cite[Theorem C.1]{CDNW}, which still valid for the nonlinear heat equation with absorption,  establishes the necessary regularity.
\end{remark}

\begin{proposition}
\label{bigselsim}
Let $m \in \{1, 2, \cdots, N\}$ and $\alpha > 0$.  There exists a self-similar solution
$V(t,x) = t^{-1/\alpha}g(\frac{x}{\sqrt t})$ of equation \eqref{nheq} such that $g \in C_0^{b,u}(\Omega_m)$,
the space of bounded uniformly continuous functions on $\Omega_m$ which are zero on $\partial\Omega_m$, $g \ge 0$, and
\begin{equation}
\label{gbeh}
\alpha^{-1/\alpha}e^{\Delta_m}h \le g \le (\alpha \epsilon)^{-1/\alpha}e^{(1-\epsilon)\Delta_m}h
\end{equation}
for all $0 < \epsilon < 1$, where $h(x) = 1$ is the constant function on $\Omega_m$.

The self-similar solution $V$ is characterized by
\begin{equation}
\label{selsimlim}
V = \lim_{\lambda \to \infty}\Gamma_\lambda^{2/\alpha} v
\end{equation}
where $v$ is the solution to \eqref{nheq} with initial value $v_0 = h$, as described in Remark~\ref{existLinf},
 the dilations $\Gamma_\lambda^{2/\alpha}$ are defined by \eqref{dilsolnew},
and where the
 limit \eqref{selsimlim} is  uniform on compact subsets of $(0,\infty)\times \overline\Omega_m$.
\end{proposition}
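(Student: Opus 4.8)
The plan is to construct $V$ as the increasing limit of the dilated solutions $\Gamma_\lambda^{2/\alpha}v$ and then to read off every stated property from the universal bound \eqref{upperbd}, the precise upper estimate of Theorem~\ref{genupest}, and a single well-chosen subsolution. First I would fix $v$ to be the solution of \eqref{nheq} with initial value $v_0=h\equiv1$ furnished by Remark~\ref{existLinf}; by comparison $v\ge0$, and \eqref{upperbd} gives $v(t)\le(\alpha t)^{-1/\alpha}$. Since $\sigma=2/\alpha$ is exactly the exponent preserving solutions, each $\Gamma_\lambda^{2/\alpha}v$ is again a solution in $C((0,\infty),C_0^{b,u}(\Omega_m))$, with initial value $D_\lambda^{2/\alpha}h=\lambda^{2/\alpha}h$ (a constant). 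As $\lambda$ grows these constants increase, so the comparison principle (available in the $L^\infty$ framework of Remark~\ref{existLinf}) shows $\lambda\mapsto\Gamma_\lambda^{2/\alpha}v(t,x)$ is nondecreasing, while \eqref{upperbd} applied to $\Gamma_\lambda^{2/\alpha}v$ gives the $\lambda$-independent bound $\Gamma_\lambda^{2/\alpha}v(t,x)\le(\alpha t)^{-1/\alpha}$. Hence the monotone pointwise limit $V=\lim_{\lambda\to\infty}\Gamma_\lambda^{2/\alpha}v$ exists with $0\le V\le(\alpha t)^{-1/\alpha}$.

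Next I would promote this pointwise limit. Passing to the antisymmetric extensions on $\R^N$, where $\Gamma_\lambda^{2/\alpha}$ preserves antisymmetry and the family is locally bounded by $(\alpha t)^{-1/\alpha}$, interior parabolic regularity makes $\{\Gamma_\lambda^{2/\alpha}v\}$ locally equicontinuous on $(0,\infty)\times\R^N$; combined with monotone pointwise convergence this forces convergence uniform on compact subsets of $(0,\infty)\times\overline\Omega_m$ and shows that $V$ is a classical solution of \eqref{nheq} with $V(t)\in C_0^{b,u}(\Omega_m)$. This is precisely \eqref{selsimlim}. Self-similarity then follows from the semigroup property of the dilations: for fixed $\mu>0$, $\Gamma_\mu^{2/\alpha}V=\lim_{\lambda\to\infty}\Gamma_{\mu\lambda}^{2/\alpha}v=V$, so $V(t,x)=t^{-1/\alpha}g(x/\sqrt t)$ with $g=V(1,\cdot)\in C_0^{b,u}(\Omega_m)$, $g\ge0$, and $0\le g\le\alpha^{-1/\alpha}$.

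It remains to prove \eqref{gbeh}. For the upper bound I would use Duhamel's formula for $V$ on $[\epsilon,1]$, $V(1)=e^{(1-\epsilon)\Delta_m}V(\epsilon)-\int_\epsilon^1 e^{(1-s)\Delta_m}\bigl(V(s)^{\alpha+1}\bigr)\,ds$; dropping the nonnegative integral and inserting $V(\epsilon)\le(\alpha\epsilon)^{-1/\alpha}=(\alpha\epsilon)^{-1/\alpha}h$ yields $g=V(1)\le(\alpha\epsilon)^{-1/\alpha}e^{(1-\epsilon)\Delta_m}h$. The lower bound is the delicate point, because it is invariant under the self-similar scaling and therefore cannot be obtained from a comparison at a single time. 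The device is to observe, using $0\le e^{(t+c)\Delta_m}h\le1$ and the identity $W_{c,t}-\Delta W_c=-\tfrac{1}{\alpha(t+c)}W_c$, that $W_c(t,x):=\alpha^{-1/\alpha}(t+c)^{-1/\alpha}e^{(t+c)\Delta_m}h(x)$ is a classical subsolution of \eqref{nheq} vanishing on $\partial\Omega_m$ with finite initial value, and that the choice $c=1/\alpha$ makes $W_c(0)\le1=v_0$. The comparison principle then gives $v(t)\ge W_{1/\alpha}(t)$; applying $\Gamma_\lambda^{2/\alpha}$ and letting $\lambda\to\infty$ (the shift $1/\alpha$ becoming negligible after dilation, while $e^{\lambda^2 s\Delta_m}h(\lambda x)\to e^{s\Delta_m}h(x)$ since $h$ is homogeneous of degree $0$) converts this into $V(t,x)\ge\alpha^{-1/\alpha}t^{-1/\alpha}e^{t\Delta_m}h(x)$, that is $g\ge\alpha^{-1/\alpha}e^{\Delta_m}h$.

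The main obstacle is this lower bound: the self-similar invariance of the inequality rules out any direct single-time comparison, and the crux is to manufacture the subsolution $W_{1/\alpha}$ with \emph{finite} initial data below $v_0$ whose rescalings reconstruct the correct profile in the limit. By contrast, the regularity needed to upgrade the monotone pointwise limit to a locally uniform one is routine once one reflects to $\R^N$ and invokes interior parabolic estimates, and the upper bound is an immediate consequence of Duhamel's formula together with the universal estimate \eqref{upperbd}.
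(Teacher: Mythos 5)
Your proposal is correct, and its skeleton coincides with the paper's: the monotone family $\Gamma_\lambda^{2/\alpha}v$ whose initial data are the increasing constants $\lambda^{2/\alpha}$, the uniform bound $(\alpha t)^{-1/\alpha}$ from \eqref{upperbd}, the passage to a self-similar limit $V$ via monotonicity plus parabolic regularity, and the upper bound in \eqref{gbeh} obtained by comparing the solution at time $1$ with the linear flow started from its value at time $\epsilon$ (the paper does this at the level of $v_\lambda$ and then lets $\lambda\to\infty$; you do it directly on $V$ via Duhamel, which additionally requires knowing that $V$ satisfies the integral equation on $[\epsilon,1]$ --- the paper gets this from monotone convergence in the integral equations for $v_\lambda$, and you should either do the same or invoke uniqueness in the $L^\infty$ framework of Remark~\ref{existLinf} for the bounded classical solution $V$ restricted to $[\epsilon,1]$). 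The one genuine departure is the lower bound, which is indeed the heart of the proof. The paper sets $w(t)=(1+\alpha t)^{1/\alpha}v(t)$, uses the a priori bound $v(t)\le(1+\alpha t)^{-1/\alpha}$ (from Proposition~\ref{preciseupperestimate}) to check $w_t\ge\Delta w$, and concludes $v(t)\ge(1+\alpha t)^{-1/\alpha}e^{t\Delta_m}h$; you instead exhibit the explicit subsolution $W_c(t)=\alpha^{-1/\alpha}(t+c)^{-1/\alpha}e^{(t+c)\Delta_m}h$ with $c=1/\alpha$, whose subsolution property uses only $0\le e^{s\Delta_m}h\le1$, and compare it with $v$ to get $v(t)\ge(1+\alpha t)^{-1/\alpha}e^{(t+1/\alpha)\Delta_m}h$. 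These are dual versions of the same computation --- multiply the nonlinear solution up to produce a supersolution of the heat equation, versus multiply the linear flow down to produce a subsolution of the nonlinear equation --- and after applying $\Gamma_\lambda^{2/\alpha}$ both inequalities rescale to the identical limit $V(t)\ge\alpha^{-1/\alpha}t^{-1/\alpha}e^{t\Delta_m}h$, since the time shift becomes $\lambda^{-2}/\alpha\to0$. Your variant is marginally more self-contained (it does not need Proposition~\ref{preciseupperestimate}, only the comparison principle), while the paper's compares directly against the linear semigroup and so avoids verifying a subsolution inequality. One small correction: the relation $(e^{\lambda^2s\Delta_m}h)(\lambda x)=(e^{s\Delta_m}h)(x)$ is an exact identity, by \eqref{dilsmg} and the homogeneity of degree $0$ of $h$ (this is \eqref{constscale} in the paper), not merely an asymptotic statement; in your scaling step only the shift $1/\alpha$ requires a limiting argument.
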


We observe that in the case $m = 0$, the corresponding self-similar solution is
$(\alpha t)^{-1/\alpha}$.

\begin{proof}
Throughout this proof, we let $h \in L^\infty(\Omega_m)$ denote the specific function
\begin{equation}
\label{h}
h(x) = 1, \, x \in \Omega_m.
\end{equation}
It follows from
\eqref{dilsmg} that
\begin{equation}
\label{constscale}
(e^{\lambda^2t\Delta_m}h)(\lambda x) = (e^{t\Delta_m}h)(x)
\end{equation}

Next we let $v = v(t,x)$ be the global solution of \eqref{nheq} or  \eqref{nheqINTE} with initial
value $v_0 = h$, i.e. $v_0(x) = v(0,x) = 1$, for all $x \in \Omega_m$, as described in Remark~\ref{existLinf}.
For all $\lambda > 0$,
\begin{equation*}
v_\lambda(t,x) = \lambda^{2/\alpha}v(\lambda^2t, \lambda x)
\end{equation*}
 is likewise a solution of
\eqref{nheq} or  \eqref{nheqINTE}, but with initial value
\begin{equation}
\label{contsiv}
v_{0,\lambda}(x) = v_\lambda(0,x) = \lambda^{2/\alpha}v_0(\lambda x) =  \lambda^{2/\alpha}
\end{equation}
for all $x \in \Omega_m$.  Since $\lambda \to v_{0,\lambda}$ is an increasing function, by comparison
so must be $\lambda \to v_\lambda$.  Moreover, we know that
\begin{equation}
\label{vlambd}
v_\lambda(t,x) \le (\alpha t)^{-1/\alpha},
\end{equation}
so that the $v_\lambda$ must converge to some function
\begin{equation*}
V(t,x) \le (\alpha t)^{-1/\alpha},
\end{equation*}
and in particular  $V(t) \in L^\infty(\Omega_m)$ for $t > 0$. Since each $v_\lambda$ is a solution of the integral equation \eqref{nheqINTE} on every interval
$[\epsilon, T] \subset (0, \infty)$, the same must be true for $V$, by the monotone convergence theorem.
  Hence $V$ is a solution of \eqref{nheq} and $V \in C((0,\infty); C_0^{b,u}(\Omega_m))$ since
initial values in $L^\infty(\Omega_m)$ give rise to
  solutions of \eqref{nheqINTE} in $C((0,\infty); C_0^{b,u}(\Omega_m))$ as per Remark~\ref{existLinf}.
Note that by parabolic regularity and standard compactness arguments, the
convergence of the $v_\lambda$ to $V$ is  uniform on compact subsets of $(0,\infty)\times \overline\Omega_m.$
Moreover, $V$ a self-similar solution, being the limit of the dilated solutions $v_\lambda$.
 Thus we can write
 \begin{equation}
\label{vprofil}
V(t,x) = t^{-1/\alpha}g(\frac{x}{\sqrt t}),
\end{equation}
where $g = V(1) \in C_0^{b,u}(\Omega_m)$ is the profile of $V$.

As for the behavior of $g$ we first observe that, for $t > 0$ and $\epsilon > 0$, by \eqref{vlambd},
 \begin{equation*}
 v_\lambda(t + \epsilon,\cdot)  \le e^{t\Delta_m}(v_\lambda(\epsilon)) \le (\alpha \epsilon)^{-1/\alpha}e^{t\Delta_m}h.
 \end{equation*}
 Letting $\lambda \to \infty$, we see that
for $t > 0$ and $\epsilon > 0$,
\begin{equation*}
V(t + \epsilon) \le e^{t\Delta_m}V(\epsilon) \le (\alpha \epsilon)^{-1/\alpha}e^{t\Delta_m}h,
\end{equation*}
so that
 \begin{equation}
 \label{gupper}
g = V(1) \le (\alpha \epsilon)^{-1/\alpha}e^{(1-\epsilon)\Delta_m}h
\end{equation}
for all small $0 < \epsilon < 1$.
Also
\begin{equation}
\label{Vupper}
V(2t) \le (\alpha t)^{-1/\alpha}e^{t\Delta_m}h.
\end{equation}
On the other hand, we claim that for all $t > 0$
\begin{equation}
\label{Vlower}
V(t) \ge (\alpha t)^{-1/\alpha}e^{t\Delta_m}h.
\end{equation}
To see this, we first show that
\begin{equation}
\label{vlower}
v(t) \ge (1 + \alpha t)^{-1/\alpha} e^{t\Delta_m}h.
\end{equation}
Indeed, if we set $w(t) = (1 + \alpha t)^{1/\alpha}v(t)$, so that $w(0) = v(0) = h$, then since
$v(t) \le (1 + \alpha t)^{-1/\alpha}$, (which follows in particular from Proposition~\ref{preciseupperestimate}
since $|e^{t\Delta_m}v_0| \le 1$) we get that
\begin{align*}
w'(t) &= (1 + \alpha t)^{\frac{1}{\alpha} - 1}v(t) + (1 + \alpha t)^{1/\alpha}v'(t)\\
&\ge (1 + \alpha t)^{1/\alpha}v(t)^{\alpha + 1} + (1 + \alpha t)^{1/\alpha}v'(t)\\
& = \Delta w(t),
\end{align*}
which implies that $w(t) \ge e^{t\Delta_m}w(0) = e^{t\Delta_m}h$.
This proves \eqref{vlower}.  By \eqref{constscale}, it follow that
\begin{equation}
\label{vlamlower}
v_\lambda(t,x) \ge \lambda^{2/\alpha}(1 + \alpha \lambda^2 t)^{-1/\alpha} (e^{t\lambda^2\Delta_m}h)(\lambda x)
 = (\lambda^{-2} + \alpha  t)^{-1/\alpha}e^{t\Delta_m}h.
\end{equation}
The lower bound \eqref{Vlower} now follows by letting $\lambda \to \infty$ in \eqref{vlamlower}.
Hence
\begin{equation}
\label{glower}
g = V(1) \ge \alpha^{-1/\alpha}e^{\Delta_m}h.
\end{equation}

Finally, we note the perhaps curious result that
\begin{equation}
\label{selfcomp}
V(2t) \le (\alpha t)^{-1/\alpha}e^{t\Delta_m}h \le V(t)
\end{equation}
for all $t > 0$.

\end{proof}

\begin{proof}[Proof of Theorem \ref{nonlincaseCptsector}]
 By the hypotheses on $u_0$ and by Proposition~\ref{timepos},   we have that for $t_0>0,$
 \begin{equation}
 \label{u0hat}u(t_0,x)\geq cx_1\cdots x_m\min[1,|x|^{-\gamma-2m}],
  \end{equation}
 on $\Omega_m$ and we know that $u(t_0)\in C_0(\Omega_m).$  Up to a translation in time and since we are concerned with the large time behavior,  we may suppose that $u_0 \in \X\cap C_0(\Omega_m)$, $u_0 \ge 0$ and verifies \eqref{u0hat}.

 In fact, it suffices to assume
 \begin{equation}
 \label{suffice}
 u_0(x) =  cx_1\cdots x_m\min[1,|x|^{-\gamma-2m}].
  \end{equation}
  Indeed, suppose $ u_0(x) =  cx_1\cdots x_m\min[1,|x|^{-\gamma-2m}] \le v_0(x) \le c'$ for some $c' > c$,
  and that $u(t,x)$, $v(t,x)$ and $w(t,x)$ are the solutions of \eqref{nheq}  with initial values respectively
  $u_0$, $v_0$ and $w_0 \equiv c'$.
  We know by comparison that
 \begin{equation}
 \label{pinching}
t^{\frac{1}{\alpha}}u(t,x\sqrt t) \le t^{\frac{1}{\alpha}}v(t,x\sqrt t)
\le t^{\frac{1}{\alpha}}w(t,x\sqrt t).
  \end{equation}
  Hence if we prove that
  \begin{equation*}
  \lim_{t \to \infty} t^{\frac{1}{\alpha}}u(t,x\sqrt t) = g(x)
\end{equation*}
uniformly on compact subsets of $\overline\Omega_m$,
  then clearly,
  since by Proposition~\ref{bigselsim}
    \begin{equation*}
  \lim_{t \to \infty} t^{\frac{1}{\alpha}}w(t,x\sqrt t) = g(x)
\end{equation*}
also uniformly on compact subsets of $\overline\Omega_m$,
it follows that
 \begin{equation*}
  \lim_{t \to \infty}  t^{\frac{1}{\alpha}}v(t,x\sqrt t) = g(x)
\end{equation*}
uniformly on compact subsets of $\overline\Omega_m$.
Thus, we now assume the initial value $u_0 \in \X$ is given by \eqref{suffice}, and we denote by
$u(t)=\mathcal{S}_m(t) u_0$
be the resulting solution of \eqref{nheq} given by Theorem~\ref{exist}.

 We use a method introduced in \cite{KP}. Consider the space-time dilations functions defined by \eqref{dilsolnew} with $\sigma=2/\alpha$:
 \begin{equation}
 \label{ulambda}
 u_\lambda(t,x)=\Gamma^{2/\alpha}_\lambda u(t,x)=\lambda^{2/\alpha}u(\lambda^2 t,\lambda x),\; \lambda>0,\; t>0,\; x\in \Omega_m.
 \end{equation}
 In particular, $u_\lambda$ is the solution of \eqref{nheq} with initial data
  \begin{equation}
 \label{u0lambda}
 u_{0,\lambda}(x)=D^{2/\alpha}_\lambda u_0(x)=\lambda^{2/\alpha}u_0(\lambda x)
 =   c\lambda^{2/\alpha}x_1\cdots x_m\min[\lambda^m,\lambda^{-\gamma - m}|x|^{-\gamma-2m}], \; x\in \Omega_m.
 \end{equation}
  Since $\frac{2}{\alpha} > \gamma + m$, it follows that $ u_{0,\lambda}(x)$ is an increasing function in $\lambda > 0$,
 for all $x\in \Omega_m$.  (It's the minimum of two functions which are obviously increasing in
$\lambda$.)  Consequently, the solutions $u_\lambda(t,x)$ are likewise increasing in $\lambda > 0$.
We note also that the solutions $w_\lambda(t,x)$ are increasing in $\lambda > 0$ (as in the proof of Proposition~\ref{bigselsim}), where $w$ is the solution with initial value $w_0 \equiv c'$ as above.

 Since
 \begin{equation*}
 \label{uperulda}
 u_\lambda(t,x)\leq w_\lambda(t,x) \le V(t,x) , \; \mbox{in}\; (0,\infty)\times \Omega_m,
  \end{equation*}
where $V$ is the self-similar solution of Proposition~\ref{bigselsim}, it follows that the following limit
   \begin{equation}
 \label{limulbda}
\lim_{\lambda\to \infty} u_\lambda(t,x) = U(t,x) \leq V(t,x),
 \end{equation}
 exists and
    \begin{equation}
 \label{limulbdaless}
u_\lambda(t,x) \le U(t,x),
 \end{equation}
for all $\lambda > 0$.  Moreover, by parabolic regularity and standard compactness arguments, the
 limit \eqref{limulbda} is  uniform on compact subsets of $(0,\infty)\times \overline\Omega_m.$

We next wish to show that
  \begin{equation}
 \label{valueU}
 U(t,x)=V(t,x)
 \end{equation}
  on $(0,\infty)\times \Omega_m.$  For this we  need to obtain a lower bound for $U.$

 Let $A>0,$ and consider the family of truncated initial values
\begin{equation*}
 u_{0,\lambda}^A(x)=\min[u_{0,\lambda}(x),A],\; x\in \Omega_m.
\end{equation*}
 Let $z_{\lambda}^A$ be the solution of \eqref{nheq} with initial data $u_{0,\lambda}^A.$ By comparison principle
 and \eqref{limulbdaless}
 \begin{equation}
 \label{compvkuk}
 z_{\lambda}^A(t,x)\leq u_\lambda(t,x) \le U(t,x),\; \mbox{in}\;  [0,\infty)\times \Omega_m,
 \end{equation}
 for every $\lambda>0$ and $A>0.$
 Moreover, it is clear from \eqref{u0lambda} that for each fixed $A > 0$, the initial values
 $u_{0,\lambda}^A(x)$ are an increasing function of $\lambda > 0$, and so therefore
 must be the solutions $z_{\lambda}^A(t,x)$.  Furthermore, the
 initial values satisfy the monotone limit
 \begin{equation}
 \label{inclim}
 \lim_{\lambda \to \infty}u_{0,\lambda}^A(x) = A,
 \end{equation}
 and the corresponding solutions converge in a monotone fashion to some function
  \begin{equation}
 \label{cvvldaA}
 \lim_{\lambda\to \infty} z_{\lambda}^A(t,x)=Z_A(t,x) \le U(t,x).
  \end{equation}

 We next consider the integral equation satisfied by $z_{\lambda}^A(t)$, i.e.
 equation \eqref{nheqINTE} with initial value $u_{0,\lambda}^A$.  Using \eqref{inclim}
 and \eqref{cvvldaA} along with the monotone convergence theorem, we see that
 $Z_A(t)$ satisfies
 \begin{equation}
\label{nheqINTEVA}
 Z_A(t)= e^{t\Delta_m} A - \int_0^t e^{(t-s)\Delta_m}\left( |Z_A(s)|^{\alpha} Z_A(s)  \right) ds,
 \end{equation}
i.e. $Z_A$ is the solution of \eqref{nheqINTE} with initial value $Z_A(0) \equiv A$ on $\Omega_m$.

 We know by (the proof of) Proposition~\ref{bigselsim} that
 \begin{equation*}
 \lim_{A \to \infty} Z_A(t) =V(t),
 \end{equation*}
which implies, along with \eqref{cvvldaA}, that $V(t,x) \le U(t,x)$.
Thus by \eqref{limulbda}, $V(t,x) = U(t,x)$.

Thus we have shown that
   \begin{equation}
 \label{limulbdabis}
\lim_{\lambda\to \infty} u_\lambda(t,x)
= \lim_{\lambda\to \infty} \lambda^{2/\alpha}u(\lambda^2 t,\lambda x)
=  V(t,x) = t^{-1/\alpha}g(\frac{x}{\sqrt t}),
 \end{equation}
 where the limit is uniform on compact subsets of $(0,\infty) \times \overline\Omega_m$.
 The result now follows first by setting $t = 1$ in \eqref{limulbdabis},
 and then by replacing $\lambda^2$ by $\tau$.

\end{proof}

\section{Case of $\Rd$}
\label{extensec}

In this section, we consider the extension of the results in the previous section on the sectors $\Omega_m$ to the case of antisymmetric functions on $\Rd$. Recall that if $\psi:\Omega_m \to\R$, then $\widetilde{\psi}$ denotes its
pointwise  extension to $\Rd$ which is antisymmetric with respect to $x_1, x_2,\cdots, x_m$.  Similarly, if $K \subset \overline\Omega_m$,
then $\widetilde K \subset \R^N$ denotes its antisymmetric extension.  Similar notation is used for spaces of functions, etc.

The following two results show the equivalence of various kinds of convergence on $\Omega_m$ to the  corresponding
convergence on  $\R^N$.

\begin{proposition} \label{cvkinds}
Let $m\in\{1,\cdots,N\}$, $0<\gamma<N$ and $M>0$. Let $(\psi_k)_{k\ge1} \subset \B$ and $\psi\in \B$.
The following are equivalent:
\begin{enumerate}
\item[(i)] $\psi_k \to \psi$ in $\Bs$ as $k\to \infty$;
\item[(ii)] $\psi_k \to \psi$ in $\Dd$ as $k\to \infty$;
\item[(iii)] $\widetilde{\psi_k} \to \widetilde{\psi}$ in $\Sd$ as $k\to \infty$;
\item[(iv)] $\widetilde{\psi_k} \to \widetilde{\psi}$ in $\Bts$ as $k\to \infty$;
\item[(v)] $\widetilde{\psi_k} \to \widetilde{\psi}$ in $\mathcal{D}'(\Rz)$ as $\; k\to \infty$;
\item[(vi)] $\widetilde{\psi_k} \to \widetilde{\psi}$ in $(\Bg)^\star$ as $\; k\to \infty$.
\end{enumerate}
\end{proposition}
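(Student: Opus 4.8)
The plan is to use that, on the norm-bounded sets $\B$ and $\Bt$, each of the six topologies is the weak$^\star$ topology attached to an explicit predual, and to reduce every pairing to an integral over the sector $\Omega_m$. I would first dispose of the two ``free'' links. The equivalence (i)$\Leftrightarrow$(ii) is \cite[Proposition 3.1, p.\ 356]{MTW}, which says that the weak$^\star$ topology of $\X$ and the $\Dd$ topology agree on $\B$. The equivalence (i)$\Leftrightarrow$(iv) is definitional: $\Bts$ is by construction $\Bt$ equipped with the metric carried over from $\Bs$ by the bijection $\psi\mapsto\widetilde\psi$, so $\widetilde{\psi_k}\to\widetilde\psi$ in $\Bts$ means exactly $\psi_k\to\psi$ in $\Bs$. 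It then remains to incorporate (iii), (v), (vi); I would arrange the first two through the cycle
\[
\text{(i)}\Rightarrow\text{(iii)}\Rightarrow\text{(v)}\Rightarrow\text{(ii)},
\]
in which (iii)$\Rightarrow$(v) and (v)$\Rightarrow$(ii) are immediate from the inclusions of test spaces $\D\subset\mathcal{D}(\Rz)\subset\S(\Rd)$ and the fact that $\widetilde{\psi_k}$ restricts to $\psi_k$ on $\Omega_m$ (a compactly supported smooth function on $\Rz$ is Schwartz, and one supported in $\Omega_m$ lies in $\mathcal{D}(\Rz)$).

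The analytic core is (i)$\Rightarrow$(iii). The isometry $\psi\mapsto\rho_m\psi$ from $\X$ onto $L^\infty(\Omega_m)$ identifies the predual, so that $\psi_k\to\psi$ weak$^\star$ in $\X$ reads
\[
\int_{\Omega_m}\psi_k\,w\,dx\longrightarrow\int_{\Omega_m}\psi\,w\,dx
\quad\text{for every } w\in L^1(\Omega_m,\rho_m^{-1}\,dx).
\]
Since each $\widetilde{\psi_k}$ and $\widetilde\psi$ is antisymmetric, for $\varphi\in\S(\Rd)$ only the antisymmetric part $\varphi_a$ of $\varphi$ contributes — the projection obtained by averaging the $2^m$ reflected copies $\varphi(\pm x_1,\dots,\pm x_m,x_{m+1},\dots,x_N)$ weighted by the product of the chosen signs — and \cite[Definition 1.6, p.\ 346]{MTW} yields $\langle\widetilde{\psi_k}-\widetilde\psi,\varphi\rangle=2^m\int_{\Omega_m}(\psi_k-\psi)\varphi_a\,dx$. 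Being odd in each of $x_1,\dots,x_m$, the function $\varphi_a$ factors as $\varphi_a=x_1\cdots x_m H$ with $H$ smooth, so $|\varphi_a(x)|\le C x_1\cdots x_m$ near the origin; together with the rapid decay of $\varphi_a$ this gives
\[
\int_{\Omega_m}|\varphi_a|\,\rho_m^{-1}\,dx\le C\int_{\Omega_m\cap\{|x|\le1\}}(x_1\cdots x_m)^2|x|^{-\gamma-2m}\,dx+C\int_{\Omega_m\cap\{|x|\ge1\}}|x|^{-\gamma-m}|\varphi_a|\,dx<\infty,
\]
the first integral being finite precisely because $\gamma<N$. Thus $\varphi_a|_{\Omega_m}$ is an admissible predual element, the displayed weak$^\star$ convergence forces $\langle\widetilde{\psi_k},\varphi\rangle\to\langle\widetilde\psi,\varphi\rangle$, and (iii) follows.

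For (vi) I would argue directly through the predual of $\Wd$. The isometry $u\mapsto|\cdot|^{\gamma+m}u$ onto $L^\infty(\Rd)$ shows that convergence in $(\Bg)^\star$ means $\int_{\Rd}\widetilde{\psi_k}\,w\,dx\to\int_{\Rd}\widetilde\psi\,w\,dx$ for all $w\in L^1(\Rd,|x|^{-(\gamma+m)}\,dx)$. Testing against $\mathcal{D}(\Rz)$, which sits in this predual because the weight is bounded off the origin, gives (vi)$\Rightarrow$(v). Conversely, the uniform bound $|\widetilde{\psi_k}(x)|\le M|x_1\cdots x_m|\,|x|^{-\gamma-2m}\le M|x|^{-(\gamma+m)}$ coming from $\psi_k\in\B$ makes the functionals $w\mapsto\int_{\Rd}\widetilde{\psi_k}\,w\,dx$ equibounded on $L^1(\Rd,|x|^{-(\gamma+m)}\,dx)$; since $\mathcal{D}(\Rz)$ is dense in that space and the functionals already converge there by (v), a routine equicontinuity argument extends the convergence to the full predual. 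This proves (v)$\Leftrightarrow$(vi) and, at the same time, the equivalence of the metric on $\Bts$ with the one inherited from $(\Bg)^\star$ claimed just before the statement.

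The delicate point is (i)$\Rightarrow$(iii) in the regime $\gamma+m\ge N$, where $\widetilde{\psi_k}$ is not locally integrable at the origin and the pairing with a Schwartz function cannot be computed as an ordinary integral; it must be understood through \cite[Definition 1.6, p.\ 346]{MTW}. The mechanism that saves the argument is the first-order vanishing $|\varphi_a|\le C x_1\cdots x_m$ of the antisymmetric part on the hyperplanes $\{x_i=0\}$, $1\le i\le m$, which cancels the singularity of $\psi_k$ and restores absolute convergence of the sector integral thanks to $\gamma<N$. The remaining implications are bookkeeping with the inclusions of test spaces and with the two predual identifications.
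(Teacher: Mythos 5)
Your proposal is correct, and it takes a genuinely more self-contained route than the paper. The paper's own proof is essentially three sentences of citations: the equivalence of (i), (ii), (iii), (iv) is quoted from \cite[Propositions 3.1 (i) and 5.1]{MTW}, the equivalence (v)$\Leftrightarrow$(vi) is quoted from \cite[Proposition 2.1 (i)]{CDWL}, and the two blocks are joined by exactly the two observations you also make, namely that (iii)$\Rightarrow$(v) and (v)$\Rightarrow$(ii) follow from the inclusions of test spaces. Your skeleton is the same --- (i)$\Leftrightarrow$(ii) by \cite[Proposition 3.1]{MTW}, (i)$\Leftrightarrow$(iv) read off from the definition of $\Bts$, and the cycle (i)$\Rightarrow$(iii)$\Rightarrow$(v)$\Rightarrow$(ii) --- but where the paper cites, you prove. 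Your (i)$\Rightarrow$(iii), via the reduction of the $\Sd$-pairing to $2^m\int_{\Omega_m}(\psi_k-\psi)\varphi_a\,dx$ together with the Hadamard-type factorization $\varphi_a=x_1\cdots x_m H$ showing that $\varphi_a|_{\Omega_m}$ lies in the predual $L^1(\Omega_m,\rho_m^{-1}dx)$, is in substance a direct proof of the relevant part of \cite[Proposition 5.1]{MTW}; your (v)$\Leftrightarrow$(vi), via the predual identification of $\Wd$, the uniform bound $|\widetilde{\psi_k}(x)|\le M|x|^{-(\gamma+m)}$, and the density of $\mathcal{D}(\Rz)$ in $L^1(\Rd,|x|^{-(\gamma+m)}dx)$ plus equicontinuity, is a direct proof of \cite[Proposition 2.1 (i)]{CDWL}. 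What the paper's route buys is brevity and reliance on established results; what yours buys is transparency: it makes explicit exactly where $\gamma<N$ and the antisymmetry enter (the first-order vanishing of $\varphi_a$ on the hyperplanes $\{x_i=0\}$ cancels the singularity of $\widetilde{\psi}$, which is not locally integrable at the origin when $\gamma+m\ge N$), and it simultaneously substantiates the claim made in the introduction that the metric on $\Bts$ is equivalent to the one inherited from $(\Bg)^\star$, which the paper deduces from this very proposition.
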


\begin{proof}
From \cite[Proposition 3.1 (i), p. 356]{MTW} and \cite[Proposition 5.1, p. 361]{MTW} the statements (i), (ii), (iii) and (iv) are equivalent. From \cite[Proposition 2.1 (i), p. 1110]{CDWL} we have (v) and (vi) are equivalent. It is clear that (v) implies (ii) and (iii) implies (v). This proves the result.
\end{proof}

\begin{proposition} \label{cvloc}
Let $m\in\{1,\cdots,N\}$, $0<\gamma<N$ and $M>0$. Let $(\psi_k)_{k\ge1} \subset \B$ and $\psi\in \B$.
The two following statement are equivalent:
\begin{enumerate}
\item[(i)] $\psi_k \to \psi$ in $L^1_{loc}(\Omega_m)$ as $k\to \infty$;
\item[(ii)] $\widetilde{\psi_k} \to \widetilde{\psi}$ in $L^1_{loc}(\Rz)$ as $\; k\to \infty$.
\end{enumerate}
\end{proposition}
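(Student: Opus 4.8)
The plan is to prove the two implications separately. The direction (ii) $\Rightarrow$ (i) is immediate: since $0\notin\Omega_m$ we have $\Omega_m\subset\Rz$, so any compact $K\subset\Omega_m$ is also a compact subset of $\Rz$, and on such $K$ one has $\widetilde{\psi_k}=\psi_k$ and $\widetilde{\psi}=\psi$. Hence $\int_K|\psi_k-\psi|=\int_K|\widetilde{\psi_k}-\widetilde{\psi}|\to0$, which is exactly (i). The substance of the proposition is the converse, and the key ingredient is the uniform bound coming from membership in $\B$.

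For (i) $\Rightarrow$ (ii), I would fix a compact $K\subset\Rz$ and choose $0<r\le R$ with $r\le|x|\le R$ for all $x\in K$. Up to the null set $\bigcup_{i\le m}\{x_i=0\}$, decompose $\Rz$ into the $2^m$ octants $\sigma_\epsilon(\Omega_m)$ indexed by sign vectors $\epsilon\in\{\pm1\}^m$ acting on the first $m$ coordinates, and let $\tau_\epsilon x=(|x_1|,\dots,|x_m|,x_{m+1},\dots,x_N)$ be the measure-preserving folding of the octant $\sigma_\epsilon(\Omega_m)$ onto $\Omega_m$. By the defining relation $T_1f=\cdots=T_mf=-f$, on the octant of sign pattern $\epsilon$ one has $\widetilde{\psi}(x)=\big(\prod_{i\le m}\epsilon_i\big)\psi(\tau_\epsilon x)$, and likewise for $\psi_k$; the common sign $\prod\epsilon_i$ cancels in absolute value, so $|\widetilde{\psi_k}(x)-\widetilde{\psi}(x)|=|\psi_k(\tau_\epsilon x)-\psi(\tau_\epsilon x)|$. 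Given $\delta>0$, I would split $K$ into the near-boundary piece $K\cap\bigcup_{i=1}^m\{|x_i|<\delta\}$ and the remainder $K_\delta=K\cap\bigcap_{i=1}^m\{|x_i|\ge\delta\}$.

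On the near-boundary piece I would use that $\psi_k\in\B$ gives $|\widetilde{\psi_k}(x)|\le M|x_1\cdots x_m|\,|x|^{-\gamma-2m}$ on all of $\Rz$ (the right-hand side is invariant under the $T_i$). If some $|x_i|<\delta$ with $i\le m$ while $|x|\ge r$ and $|x_j|\le R$, then $|x_1\cdots x_m|<\delta R^{m-1}$, whence $|\widetilde{\psi_k}(x)|\le M\delta R^{m-1}r^{-\gamma-2m}$ uniformly in $k$; the near-boundary contribution is therefore at most $2M\delta R^{m-1}r^{-\gamma-2m}\,|K|$, which tends to $0$ as $\delta\to0$, uniformly in $k$. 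On $K_\delta$, folding each octant by $\tau_\epsilon$ yields
\[
\int_{K_\delta}|\widetilde{\psi_k}-\widetilde{\psi}|
= \sum_{\epsilon\in\{\pm1\}^m}\int_{\tau_\epsilon(K_\delta\cap\sigma_\epsilon(\Omega_m))}|\psi_k-\psi|,
\]
and each set $\tau_\epsilon(K_\delta\cap\sigma_\epsilon(\Omega_m))$ lies in $\{y\in\Omega_m:y_i\ge\delta,\ i\le m,\ |y|\le R\}$, a compact subset of $\Omega_m$; by (i) each of these finitely many integrals tends to $0$ as $k\to\infty$ for the fixed $\delta$. Combining, given $\eta>0$ I first choose $\delta$ so the near-boundary term is below $\eta/2$ for every $k$, then take $k$ large so the $K_\delta$ term is below $\eta/2$, obtaining $\int_K|\widetilde{\psi_k}-\widetilde{\psi}|<\eta$ for large $k$, which is (ii).

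The main obstacle is the boundary mismatch: convergence in $L^1_{\mathrm{loc}}(\Omega_m)$ supplies no control on any neighborhood of $\partial\Omega_m$, namely the hyperplanes $x_i=0$ for $i\le m$, whereas a compact subset of $\Rz$ can freely reach those hyperplanes. What closes this gap is precisely the weight $\rho_m$ built into $\X$: membership in the fixed ball $\B$ forces the uniform bound $|\widetilde{\psi_k}|\le M|x_1\cdots x_m|\,|x|^{-\gamma-2m}$, and the factor $|x_1\cdots x_m|$ vanishes uniformly as any $x_i\to0$, rendering the near-boundary contribution uniformly negligible. Without this uniform decay the equivalence would break down, so identifying and exploiting it is the heart of the argument.
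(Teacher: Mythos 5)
Your proposal is correct and follows essentially the same route as the paper's proof: both directions hinge on the same two ingredients, namely the $2^m$-fold reflection identity relating integrals over $\Omega_m$ and over $\Rz$, and, for (i) $\Rightarrow$ (ii), a splitting into a near-boundary strip controlled uniformly in $k$ by the weight bound $|\widetilde{\psi_k}|\le M|x_1\cdots x_m|\,|x|^{-\gamma-2m}$ from membership in $\B$, plus a compact set away from $\partial\Omega_m$ handled by hypothesis (i). The only differences are cosmetic (you split the compact $K\subset\Rz$ directly and fold octants with an explicit bound $2M\delta R^{m-1}r^{-\gamma-2m}|K|$, whereas the paper splits the annulus on the $\Omega_m$ side and bounds $\int_{K_\delta^c}\psi_0$ abstractly), so the two arguments are the same in substance.
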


\begin{proof}
(i) $\Rightarrow$ (ii). It suffices to show that
$\int_{{\rho\le |x| \le R}}|\widetilde{\psi_k}-\widetilde{\psi}|dx\to 0$
for all $0 < \rho < R < \infty$.  We know by assumption (i) that for
every $\delta > 0$,
$\int_{\widetilde{K_\delta}}|\widetilde{\psi_k}-\widetilde{\psi}|dx = 2^m\int_{K_\delta}|\psi_k-\psi|dx\to 0$ as $k\to \infty$,
where $K_\delta = \{x \in \Omega_m : \rho \le |x| \le R, \, \rm{dist}(x,\partial\Omega_m) \ge \delta\}$.
On the other hand, $\int_{K_\delta^c}|\psi_k-\psi|dx \le 2M\int_{K_\delta^c}\psi_0dx \to 0$ as $\delta \to 0$,
where $K_\delta^c = \{x \in \Omega_m : \rho \le |x| \le R, \, \rm{dist}(x,\partial\Omega_m) \le \delta\}$.
Thus, given $\epsilon > 0$, fix $\delta > 0$ so that $\int_{K_\delta^c}|\psi_k-\psi|dx \le \frac{\epsilon}{2^{m+1}}$
for all $k \ge 1$
and then choose $k_0 > 0$ so that $\int_{K_\delta}|\psi_k-\psi|dx \le \frac{\epsilon}{2^{m+1}}$ for all $k \ge k_0$.

(ii) $\Rightarrow$ (i).  Let $K$ be a compact of $\Omega_m.$ Then by continuity of the reflection function, $\widetilde{K}$ is a compact of $\Rz$ and $\int_{K}|\psi_k-\psi|dx= 2^{-m}\int_{\widetilde{K}}|\widetilde{\psi_k}-\widetilde{\psi}|dx\to 0$ as $k\to \infty.$ Hence (i) holds. This establishes the result.
\end{proof}

In light of Propositions~\ref{cvkinds} and \ref{cvloc},  Theorems~\ref{extexist}, \ref{extdepcnt}, \ref{extencritical}, \ref{extenlincase},
and \ref{nonlincaseRn} are now immediate consequences of the analogous results on the sector $\Omega_m$, either by
re-interpretation as results about antisymmetric functions on $\R^N$ as described in \cite[Section 3]{TW2}, or by simply re-doing the proofs essentially line for line
but considering the antisymmetric extension to $\R^N$ of all the functions defined on $\Omega_m$.

We wish, however, to specifically identify the self-similar solution on $\R^N$ which is the antisymmetric extension
of the self-similar solution constructed in Proposition~\ref{bigselsim}, as we think it is of sufficient independent interest.

\begin{proposition}
\label{newselsim}
Let $m \in \{1, 2, \cdots, N\}$ and $\alpha > 0$.  There exists a self-similar solution
$V(t,x) = t^{-1/\alpha}g(\frac{x}{\sqrt t})$ of equation \eqref{nheq} such that $g \in C^{b,u}(\R^N)$,
the space of bounded uniformly continuous functions on $\R^N$, $g$ is anti-symmetric in $x_1, x_2, \cdots, x_m$, and
\begin{equation}
\label{gbehRn}
\alpha^{-1/\alpha}e^{\Delta}h(x) \le g(x) \le (\alpha \epsilon)^{-1/\alpha}e^{(1-\epsilon)\Delta}h(x), \quad x \in \Omega_m,
\end{equation}
for all $0 < \epsilon < 1$, where $h \in L^\infty(\R^N)$ is the antisymmetric function such that  $h(x) = 1, x \in \Omega_m$.

The self-similar solution $V$ is characterized by
\begin{equation}
\label{selsimlimRn}
V = \lim_{\lambda \to \infty}\Gamma_\lambda^{2/\alpha} v
\end{equation}
where $v$ is the solution to \eqref{nheq} on $\R^N$ with initial value $v_0 = h$, as described in Remark~\ref{existLinf},
 the dilations $\Gamma_\lambda^{2/\alpha}$ are defined by \eqref{dilsolnew},
and where the
 limit \eqref{selsimlim} is  uniform on compact subsets of $(0,\infty)\times \R^N$.
\end{proposition}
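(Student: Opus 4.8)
The plan is to obtain $V$ as the antisymmetric extension to $\Rd$ of the self-similar solution $V_m$ already constructed on the sector in Proposition~\ref{bigselsim}, and to transfer each asserted property from $\Omega_m$ to $\Rd$ using the compatibility relation \eqref{sgrel} together with preservation of antisymmetry. Concretely, let $h_m \equiv 1$ on $\Omega_m$, let $v_m$ be the solution of \eqref{nheq} on $\Omega_m$ with initial value $h_m$ as in Remark~\ref{existLinf}, and let $V_m(t,x) = t^{-1/\alpha}g_m(x/\sqrt t)$ be the self-similar solution of Proposition~\ref{bigselsim} with profile $g_m = V_m(1) \in C_0^{b,u}(\Omega_m)$. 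I would then set $V = \widetilde{V_m}$ and $g = \widetilde{g_m}$. Since $g_m$ is bounded, uniformly continuous, and vanishes on $\partial\Omega_m$, its odd reflection $g$ is bounded, continuous across each hyperplane $\{x_i = 0\}$, and uniformly continuous on $\Rd$; thus $g \in C^{b,u}(\Rd)$ and is antisymmetric in $x_1,\dots,x_m$ by construction. Because the dilation operators commute with antisymmetric extension, $V(t,x) = t^{-1/\alpha}g(x/\sqrt t)$ is self-similar.

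Next I would verify that $V = \widetilde{V_m}$ is a genuine solution of \eqref{nheq} on all of $\Rd$. This is the point requiring the most care: one must check that the odd reflection is a classical solution \emph{across} the hyperplanes $\{x_i = 0\}$, not merely on each open orthant. Here the crucial structural fact is that the nonlinearity $u \mapsto |u|^\alpha u$ is odd, so the reflected function satisfies the same equation, and that $V_m$ is smooth up to $\partial\Omega_m$, where it vanishes; the standard odd-reflection principle then yields a $C^2$, hence classical, solution on $\Rd$. This is precisely the mechanism by which $\mathcal S(t)$ preserves antisymmetry, discussed in Section~3 of \cite{TW2} and encoded (for $\X$ data) in \eqref{relsflows}. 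I regard this reflection-principle step as the main obstacle, although it is by now routine given the oddness of the nonlinearity.

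For the two-sided bound \eqref{gbehRn}, I would restrict to $x \in \Omega_m$, where $g(x) = g_m(x)$, and invoke the bound \eqref{gbeh} from Proposition~\ref{bigselsim}. The only translation needed is to replace $e^{t\Delta_m}h_m$ by $e^{t\Delta}h$: by \eqref{sgrel} one has $\widetilde{e^{t\Delta_m}h_m} = e^{t\Delta}\widetilde{h_m} = e^{t\Delta}h$, so these two functions agree on $\Omega_m$, and \eqref{gbeh} becomes \eqref{gbehRn} verbatim.

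Finally, for the characterization \eqref{selsimlimRn}, let $v$ be the solution on $\Rd$ with initial value $v_0 = h$. By preservation of antisymmetry for $L^\infty$ initial data (the $L^\infty$ analogue of \eqref{relsflows}, cf.\ Remark~\ref{existLinf} and Section~3 of \cite{TW2}) and uniqueness, $v = \widetilde{v_m}$, hence $\Gamma_\lambda^{2/\alpha}v = \widetilde{\Gamma_\lambda^{2/\alpha}v_m}$ for every $\lambda > 0$. Proposition~\ref{bigselsim} gives $\Gamma_\lambda^{2/\alpha}v_m \to V_m$ uniformly on compact subsets of $(0,\infty)\times\overline\Omega_m$ as $\lambda \to \infty$. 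Since any compact subset of $(0,\infty)\times\Rd$ is covered by the finitely many reflections of compact subsets of $(0,\infty)\times\overline\Omega_m$, and antisymmetric extension preserves the sup norm on each such piece, it follows that $\Gamma_\lambda^{2/\alpha}v \to \widetilde{V_m} = V$ uniformly on compact subsets of $(0,\infty)\times\Rd$, which is \eqref{selsimlimRn}. This completes the transfer and hence the proof.
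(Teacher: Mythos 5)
Your proposal is correct and takes essentially the same approach as the paper: the paper offers no separate argument for Proposition~\ref{newselsim}, presenting it exactly as the antisymmetric extension of Proposition~\ref{bigselsim}, with all properties transferred to $\R^N$ via \eqref{sgrel}, \eqref{relsflows}, and the preservation of antisymmetry discussed in Section~\ref{extensec} and in \cite[Section 3]{TW2}. Your identification $V=\widetilde{V_m}$, $g=\widetilde{g_m}$, the verification of \eqref{gbehRn} on $\Omega_m$ from \eqref{gbeh} together with \eqref{sgrel}, and the derivation of \eqref{selsimlimRn} from $v=\widetilde{v_m}$ by uniqueness and antisymmetry preservation constitute precisely the intended argument (and this last step, giving $V$ as a locally uniform limit of genuine solutions on $\R^N$, in fact renders your separate reflection-principle verification unnecessary).
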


\section{Appendix}

We give here the proof of the parabolic version of the Kato's inequality,  and we use it to establish a basic estimation used to prove Theorem 1.1. See also \cite[Lemma A.1, p. 570]{Oswald}.

\begin{lemma} \label{kato} (Kato's parabolic  inequality)
Let $Q \subset \R \times \R^N$ be any open set. Let $u \in L^1_{loc}(Q)$ be such that:
\[
u_t -\Delta u = f \quad {\rm  in } \ \mathcal{D}'(Q);
\]
with $f \in L^1_{loc}(Q)$, then
\[
|u|_t - \Delta |u| \leq {\rm sign} (u)  f \ \ {\rm  in } \ \mathcal{D}'(Q) .
\]
where
$$  {\rm sign} (u)=\left\lbrace
\begin{array}{ll}
1 & \mbox{if $u>0,$}\\
-1 & \mbox{if $u<0,$}\\
0 & \mbox{if $u=0.$}
\end{array}
\right.$$

\end{lemma}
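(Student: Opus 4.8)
The plan is to prove the inequality by regularizing the absolute value through a family of smooth convex approximants and by mollifying $u$ so that the classical chain rule applies, then passing to the limit in two stages. For $\epsilon > 0$ I would set $\beta_\epsilon(s) = \sqrt{s^2 + \epsilon^2} - \epsilon$, which is smooth, convex, satisfies $0 \le \beta_\epsilon(s) \le |s|$ and $\beta_\epsilon(s) \to |s|$ pointwise, and whose derivative $\beta_\epsilon'(s) = s/\sqrt{s^2+\epsilon^2}$ obeys $|\beta_\epsilon'| \le 1$ and $\beta_\epsilon'(s) \to {\rm sign}(s)$ pointwise, with $\beta_\epsilon'(0) = 0$ matching the convention in the statement. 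The convexity, encoded in $\beta_\epsilon'' \ge 0$, is what ultimately produces the inequality sign.

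Since the assertion is local and tested only against $\varphi \in \mathcal{D}(Q)$ with compact support, I would fix such a $\varphi \ge 0$ and work on a fixed open set compactly contained in $Q$ containing $\mathrm{supp}\,\varphi$. Mollifying in the full space-time variable $(t,x) \in \R^{N+1}$ with a standard mollifier $\rho_n$, I set $u_n = u * \rho_n$ and $f_n = f * \rho_n$; for $n$ large these are smooth on a neighborhood of $\mathrm{supp}\,\varphi$ and satisfy $\partial_t u_n - \Delta u_n = f_n$ classically, while $u_n \to u$ and $f_n \to f$ in $L^1_{loc}$. Applying the smooth chain rule gives
\[
\partial_t \beta_\epsilon(u_n) - \Delta \beta_\epsilon(u_n)
= \beta_\epsilon'(u_n)(\partial_t u_n - \Delta u_n) - \beta_\epsilon''(u_n)|\nabla u_n|^2
\le \beta_\epsilon'(u_n) f_n,
\]
the inequality following from $\beta_\epsilon'' \ge 0$. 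Against $\varphi \ge 0$ this reads $\int_Q \beta_\epsilon(u_n)(-\partial_t\varphi - \Delta\varphi) \le \int_Q \beta_\epsilon'(u_n) f_n \varphi$.

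Next I would let $n \to \infty$ with $\epsilon$ fixed. Since $\beta_\epsilon$ is $1$-Lipschitz, $\beta_\epsilon(u_n) \to \beta_\epsilon(u)$ in $L^1_{loc}$, which handles the left side. For the right side, after passing to a subsequence along which $u_n \to u$ a.e., the bound $|\beta_\epsilon'| \le 1$ together with $f_n \to f$ in $L^1_{loc}$ and dominated convergence give $\beta_\epsilon'(u_n) f_n \to \beta_\epsilon'(u) f$ in $L^1_{loc}$; splitting $\beta_\epsilon'(u_n)f_n - \beta_\epsilon'(u)f = \beta_\epsilon'(u_n)(f_n - f) + (\beta_\epsilon'(u_n)-\beta_\epsilon'(u))f$ makes both terms manifestly small. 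This yields the same inequality with $u_n,f_n$ replaced by $u,f$. Finally letting $\epsilon \to 0$, the domination $\beta_\epsilon(u) \le |u|$ gives $\beta_\epsilon(u) \to |u|$ in $L^1_{loc}$, and $\beta_\epsilon'(u) \to {\rm sign}(u)$ pointwise with $|\beta_\epsilon'(u)|\,|f| \le |f|$ gives $\beta_\epsilon'(u)f \to {\rm sign}(u)f$ in $L^1_{loc}$ by dominated convergence. The resulting inequality $\int_Q |u|(-\partial_t\varphi - \Delta\varphi) \le \int_Q {\rm sign}(u)\,f\,\varphi$ for all $\varphi \in \mathcal{D}(Q)$ with $\varphi \ge 0$ is exactly the claim.

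I expect the only real obstacle to be the product convergence $\beta_\epsilon'(u_n) f_n \to \beta_\epsilon'(u) f$, where one cannot simply multiply limits in $L^1_{loc}$ and must instead exploit the uniform bound $|\beta_\epsilon'| \le 1$ together with the a.e. convergence of a subsequence of $(u_n)$; everything else is a routine mollification-and-dominated-convergence argument, with the convexity of $\beta_\epsilon$ supplying the crucial sign.
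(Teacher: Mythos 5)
Your proof is correct and follows essentially the same route as the paper's: both mollify $u$, apply the smooth chain rule to the convex regularization $\sqrt{s^2+\epsilon^2}$ (yours differs only by the harmless additive constant $-\epsilon$), use convexity to produce the inequality sign, and then pass to the limit first in the mollification parameter and afterwards in $\epsilon$, with dominated convergence handling the coefficient $s/\sqrt{s^2+\epsilon^2}\to\mathrm{sign}(s)$. Your explicit splitting of $\beta_\epsilon'(u_n)f_n-\beta_\epsilon'(u)f$ into two terms is in fact a slightly more careful justification of the product-convergence step that the paper treats tersely, but it is the same argument.
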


\begin{proof}
If $F : \R \to \R$ is a $C^2$ convex function and $z:Q\to \R$ a $C^2$ function, then
\begin{eqnarray*}
 (\partial_t - \Delta) F(z) &=&  F'(z)\partial_t z - \left[ F'(z) \Delta z + F''(z) |\nabla z|^2\right] \\
 &= & F'(z)(\partial_t - \Delta) z - F''(z) |\nabla z|^2\\
&\leq & F'(z)(\partial_t - \Delta) z.
\end{eqnarray*}
Mollify $u$ to $u_k = \rho_k \star u$ such  that $u_k\in C^\infty(Q)$, where $\rho_k$ is a sequence of mollifiers. Note that $u_k \to u$ and $ (\partial_t - \Delta) u_k \to  (\partial_t - \Delta)u$ in $L_{loc}^1(Q)$ as $k\to \infty$. It follows that
\[  (\partial_t - \Delta) F(u_k)\leq F'(u_k)(\partial_t - \Delta) u_k.\]
We set $F(z) =\sqrt{\varepsilon^2+z^2}$. We obtain then
\begin{equation} \label{mm}
   (\partial_t - \Delta) F(u_k)\leq \frac{u_k}{F(u_k)}(\partial_t - \Delta) u_k.
\end{equation}
By a simple calculation, we have
\[
    |F(u_k) -F(u)| \leq \big||u_k|-|u|\big| \leq |u_k-u|
\]
then $F(u_k) \to F(u)$, as $k\to \infty$, in $L_{loc}^1(Q)$ as well as pointwise a.e.  and
$(\partial_t - \Delta) F(u_k) \to  (\partial_t - \Delta)F(u)$, as $k\to \infty$, in $\mathcal{D}'(Q)$.
Since $\left| \frac{u_k}{F(u_k)} \right| \leq 1$ then the dominated convergence theorem implies that $\frac{u_k}{F(u_k)} \to \frac{u}{F(u)}$, as $k\to \infty$, in $L_{loc}^1(Q)$ .
Letting $k\to \infty$ in \eqref{mm}, we obtain that
\begin{equation} \label{mmeps}
  (\partial_t - \Delta) F(u)\leq\frac{u}{F(u)}(\partial_t - \Delta) u =\frac{u}{F(u)} f .
\end{equation}
Since $F(u) \to |u|$ uniformly, as  $\varepsilon \to 0$, such that $ (\partial_t - \Delta) F(u)\to (\partial_t - \Delta) |u|$ in $\mathcal{D}'(Q)$. Also $\frac{u}{F(u)} \to \frac{u}{|u|}$ in
$L_{loc}^1(Q)$ (again by the dominated convergence theorem). By letting $\varepsilon \to 0$ in
\eqref{mmeps}, we  obtain that
\[
     (\partial_t - \Delta) |u| \leq {\rm sign} (u)  f.
\]
This completes the proof.
\end{proof}

We have the following result, which is an application of Kato's inequality.

\begin{corollary}\label{cki} Let $X= C_0(\Omega_m)$ or $L^p(\Omega_m)$ for some $1\le p <\infty$. Let $u,v\in C((0,\infty),X)$  be two solutions of the equation \eqref{nheq} with initial values respectively
$ u_0,v_0 \in X$. Then
\[
  |u(t)-v(t)| \leq e^{t\Delta_m} |u_0-v_0|,
\]
for all $t>0$.
\end{corollary}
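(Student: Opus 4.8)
The plan is to apply Kato's parabolic inequality (Lemma~\ref{kato}) to the difference $w = u - v$ and then to exploit the monotonicity of the absorption nonlinearity. Write $f(s) = |s|^\alpha s$, so that $u$ and $v$ are both classical solutions of $\partial_t z - \Delta z + f(z) = 0$ on $Q = (0,\infty)\times\Omega_m$. Hence $w$ satisfies
\[
   w_t - \Delta w = -\big(f(u) - f(v)\big) =: g \quad \text{in}\ \mathcal{D}'(Q),
\]
and $g \in L^1_{loc}(Q)$ because $u,v \in C((0,\infty),X)$ are continuous, hence locally bounded, on $Q$. Applying Lemma~\ref{kato} on $Q$ gives
\[
   |w|_t - \Delta|w| \le {\rm sign}(w)\,g = -{\rm sign}(u-v)\,\big(f(u)-f(v)\big) \quad \text{in}\ \mathcal{D}'(Q).
\]
The key observation is that $f$ is nondecreasing on $\R$ (indeed $f'(s) = (\alpha+1)|s|^\alpha \ge 0$), so $f(u)-f(v)$ always has the same sign as $u-v$; consequently ${\rm sign}(u-v)\,(f(u)-f(v)) \ge 0$ pointwise, and therefore $\phi := |w| \ge 0$ is a weak subsolution of the Dirichlet heat equation, $\phi_t - \Delta\phi \le 0$ in $\mathcal{D}'(Q)$.

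It remains to compare $\phi$ with the semigroup, that is, to promote the differential inequality to $\phi(t) \le e^{t\Delta_m}\phi(0^+)$. Fix $t>0$, $0 < \varepsilon < t$, and a nonnegative $\chi \in C_c^\infty(\Omega_m)$, and test the subsolution inequality on the slab $(\varepsilon,t)\times\Omega_m$ against the backward solution $s \mapsto e^{(t-s)\Delta_m}\chi$, which is smooth, nonnegative, decays rapidly in $x$, and vanishes on $\partial\Omega_m$. Integrating by parts once in $s$ and twice in $x$, the interior contributions cancel because $\partial_s[e^{(t-s)\Delta_m}\chi] = -\Delta_m e^{(t-s)\Delta_m}\chi$, while the spatial boundary terms vanish since both $|w(s)|$ (as $w(s)\in X$) and $e^{(t-s)\Delta_m}\chi$ are zero on $\partial\Omega_m$. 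Using the self-adjointness of $e^{(t-\varepsilon)\Delta_m}$, this yields
\[
   \int_{\Omega_m} |w(t)|\,\chi\,dx \le \int_{\Omega_m} |w(\varepsilon)|\,e^{(t-\varepsilon)\Delta_m}\chi\,dx = \int_{\Omega_m} \big(e^{(t-\varepsilon)\Delta_m}|w(\varepsilon)|\big)\,\chi\,dx .
\]
Letting $\varepsilon \to 0$, and using that $w(\varepsilon) \to u_0 - v_0$ in $X$ together with the continuity of $e^{\tau\Delta_m}$ on $X$, we obtain $\int|w(t)|\,\chi \le \int \big(e^{t\Delta_m}|u_0-v_0|\big)\,\chi$ for every such $\chi$, whence $|w(t)| \le e^{t\Delta_m}|u_0 - v_0|$ almost everywhere, and everywhere by continuity.

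The main obstacle will be making the integration by parts rigorous for the merely weak subsolution $|w|$ while controlling the noncompact spatial integrals. I expect to handle this by working with the mollifications $\phi_k = \rho_k \star |w|$ already introduced in the proof of Lemma~\ref{kato} (so that the identities hold classically before passing to the limit), and by exploiting the rapid decay of $e^{(t-s)\Delta_m}\chi$ to discard far-field contributions. The step $\varepsilon \to 0$ is precisely where the hypothesis $u,v \in C((0,\infty),X)$ with a common initial trace in $X$ is essential; note that the same argument, verbatim, covers both choices $X = C_0(\Omega_m)$ and $X = L^p(\Omega_m)$, since only convergence of $w(\varepsilon)$ to $u_0-v_0$ in $X$ and continuity of the semigroup on $X$ are used.
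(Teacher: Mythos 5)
Your argument is correct in substance, but after the common first step it follows a genuinely different route from the paper's. Both proofs begin by applying Lemma \ref{kato} to $u-v$ and exploiting the monotonicity of $s\mapsto |s|^\alpha s$. The paper then keeps a quantitative piece of the absorption term via $\bigl||u|^\alpha u-|v|^\alpha v\bigr|\ge 2^{-\alpha}|u-v|^{\alpha+1}$, so that $\overline z=|u-v|/2$ is a distributional subsolution of the \emph{same} nonlinear equation; it compares $\overline z$, via the comparison principle, with the solution $w$ of \eqref{nheq} having initial value $|u_0-v_0|/2$, and only at the very end discards the absorption term through the Duhamel formula $w(t)\le e^{t\Delta_m}w_0$. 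You discard the absorption term immediately (only the sign information ${\rm sign}(u-v)\,\bigl(f(u)-f(v)\bigr)\ge 0$ is used), so $|u-v|$ becomes a weak subsolution of the linear Dirichlet heat equation, and you then prove the semigroup bound directly by duality against the backward flow $s\mapsto e^{(t-s)\Delta_m}\chi$. Your route is more self-contained and more general: it avoids the $2^{-\alpha}$ inequality, does not require existence, uniqueness, or the integral equation for the auxiliary nonlinear solution $w$, and works for any continuous nondecreasing absorption term. What the paper's route buys is economy: all of the weak-subsolution analysis is delegated to a single invocation of the comparison principle, whereas you must carry it out by hand in the duality step. At bottom, the two proofs hide the same technical content in different places.

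One caution about the place where your sketch is thinnest. Since $|u-v|_t-\Delta|u-v|\le 0$ holds only in $\mathcal{D}'((0,\infty)\times\Omega_m)$, you cannot literally ``integrate by parts and observe that the boundary terms vanish''; the admissible test functions are compactly supported in $\Omega_m$. The correct procedure is to test against $\theta(s)\,\zeta_\delta(x)\,e^{(t-s)\Delta_m}\chi(x)$, where $\zeta_\delta$ vanishes on a $\delta$-collar of $\partial\Omega_m$, and to show that the cutoff errors vanish: they are of size $O(\delta^{-1})$ (coming from $\nabla\zeta_\delta\cdot\nabla\psi$ and $\psi\,\Delta\zeta_\delta$, using that $\psi=e^{(t-s)\Delta_m}\chi$ vanishes at rate ${\rm dist}(x,\partial\Omega_m)$ with bounded gradient), supported on a collar of volume $O(\delta)$, and multiplied by $\sup_{\rm collar}|u-v|$, which must tend to $0$. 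This last point is exactly the Dirichlet boundary condition for $u$ and $v$; it is immediate when $X=C_0(\Omega_m)$, but when $X=L^p(\Omega_m)$ it is not an $L^p$ statement, so your claim that the argument covers both cases ``verbatim'' is too quick --- there you must use that for $s>0$ the solutions are classical, bounded (by \eqref{upperbd}), and vanish pointwise on $\partial\Omega_m$, this boundary vanishing being part of what ``solution on $\Omega_m$'' means. The paper needs the same implicit hypothesis for its comparison principle, so this is not a defect of your approach relative to the paper's, but it is where the remaining work lies.
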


\begin{proof} Denote by $w$ the unique solution with initial value $w_0=\frac{|u_0-v_0|}{2}\in X$.\\
Let $z=|u-v|$. Applying Lemma \ref{kato} with $Q=(0,\infty)\times\Omega_m$ and $f=|v|^\alpha v - |u|^\alpha u\in C(Q)$, we have that
\[
   z_t - \Delta z + \left| |u|^\alpha u- |v|^\alpha v \right| \le 0.
\]
Since, $\left| |u|^\alpha u- |v|^\alpha v \right|\geq 2^{-\alpha} |u-v|^{\alpha+1}=2^{-\alpha} z^{\alpha+1}$, we deduce that
\[
    z_t - \Delta z + 2^{-\alpha} z^{\alpha+1} \leq 0.
\]
Let $\overline{z} = \frac{z}{2}$. Then
\[
     \overline{z}_t - \Delta \overline{z} +  \overline{z}^{\alpha+1} \leq 0 = w_t - \Delta w +  w^{\alpha+1}.
\]
Since $\overline{z}(0)=w(0)$, it follows from the comparison principle, that $ \overline{z} \leq w$.
Since
$$
  w(t) = e^{t\Delta_m} w_0 - \int_0^t e^{(t-s)\Delta_m} \left(w(s)^{\alpha+1} \right) ds \leq e^{t\Delta_m} w_0,
$$
the result follows.
\end{proof}

Finally, we give two results which we found during our research
for this article, and which we believe have an independent interest, but which
ultimately were not needed for the proofs of the main results.

Consider the eigenvalue problem, on some domain $B \subset \R^N$
\begin{equation}
\label{ev-a}
-\Delta H = \Lambda H
\end{equation}
where $\Lambda \in \R$. We look for a solution of the form
\begin{equation}
\label{ev-b}
H(x) = x_1x_2\cdots x_mQ(r)
\end{equation}
where $r = (x_1^2 + x_2^2 + \cdots + x_N^2)^{1/2}$.
We note that for $1 \le i \le m$,
\begin{eqnarray*}
\partial_i H(x) &=& x_1x_2\cdots\hat x_i\cdots x_mQ(r) + x_1x_2\cdots x_mQ'(r)\frac{\partial r}{\partial x_i}\\
&=& x_1x_2\cdots\hat x_i\cdots x_mQ(r) + x_1x_2\cdots x_mQ'(r)\frac{x_i}{r},\\
\end{eqnarray*}
where $\hat x_i$ means that $x_i$ is missing from the product, and
\begin{eqnarray*}
\partial_i^2 H(x) &=& 2x_1x_2\cdots\hat x_i\cdots x_m Q'(r)\frac{x_i}{r}
 + x_1x_2\cdots x_m\left[Q''(r)\left(\frac{x_i}{r}\right)^2 + Q'(r)\frac{r^2 - x_i^2}{r^3}\right]\\
 &=& 2x_1x_2\cdots x_m\frac{Q'(r)}{r}
 + x_1x_2\cdots x_m\left[Q''(r)\left(\frac{x_i}{r}\right)^2 + Q'(r)\frac{r^2 - x_i^2}{r^3}\right],\\
\end{eqnarray*}
and if $m < i \le N$, then
\begin{equation*}
\partial_i^2 H(x) =
 x_1x_2\cdots x_m\left[Q''(r)\left(\frac{x_i}{r}\right)^2 + Q'(r)\frac{r^2 - x_i^2}{r^3}\right],
\end{equation*}
It follows that
\begin{align}
\Delta H(x)
&= \sum_{i = 1}^N \partial_i^2 H(x)
 =  x_1x_2\cdots x_m \left[\frac{2m}{r}Q'(r) + Q''(r) +  \frac{N-1}{r}Q'(r) \right] \nonumber\\
&= x_1x_2\cdots x_m \left[Q''(r) + \frac{N + 2m - 1}{r}Q'(r)\right]. \label{ev-c}
\end{align}

\begin{proposition}
\label{antisymef}
Let $B_1 = \{x \in \Omega_m : |x| < 1\}\subset \R^N$, and let $\Lambda > 0$ be the lowest eigenvalue
of $-\Delta$ on $B_1$ with Dirichlet boundary conditions.  It follows that there exists
an eigenfunction $H_1: \overline{B_1} \to [0, \infty)$ of the form \eqref{ev-b}
where $Q: [0,1] \to [0, \infty)$ is decreasing with $Q(0)=1$ and $Q(1) = 0$
and $r = (x_1^2 + x_2^2 + \cdots + x_N^2)^{1/2}$.
Moreover, the value of $\Lambda > 0$ is precisely the lowest eigenvalue of
$-\Delta$ on the unit ball in $\R^{N + 2m}$ with Dirichlet boundary conditions,
and its corresponding eigenfunction is precisely $Q(r')$ where
$r' =  (x_1^2 + x_2^2 + \cdots + x_{N + 2m}^2)^{1/2}$.
\end{proposition}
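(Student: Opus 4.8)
The key observation driving the whole argument is contained in the computation \eqref{ev-c}: the ansatz $H(x) = x_1 x_2 \cdots x_m Q(r)$ solves $-\Delta H = \Lambda H$ on the sector exactly when the radial profile $Q$ satisfies
\[
Q''(r) + \frac{N+2m-1}{r}\,Q'(r) + \Lambda Q(r) = 0,
\]
and this is precisely the equation obeyed by a radial eigenfunction of $-\Delta$ on $\R^{N+2m}$, since the radial Laplacian in dimension $d = N+2m$ is $Q'' + \frac{d-1}{r}Q'$. The plan is therefore to realize $\Lambda$ and $Q$ as the first Dirichlet eigenvalue and radial eigenfunction of the unit ball in $\R^{N+2m}$, and then to transplant that profile onto $B_1$ via \eqref{ev-b}.

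First I would take $\Lambda > 0$ to be the lowest Dirichlet eigenvalue of $-\Delta$ on the unit ball $\mathbb{B} \subset \R^{N+2m}$. It is classical that the corresponding ground state is simple and strictly positive on $\mathbb{B}$; by the $O(N+2m)$-invariance of the ball together with simplicity it must be radial, so I write it as $y \mapsto Q(|y|)$, normalized so that $Q(0) = 1$, whence $Q(1) = 0$ and $Q$ solves the displayed ODE on $(0,1)$. The monotonicity asserted in the statement follows at once from that ODE: rewriting it as $(r^{N+2m-1}Q')' = -\Lambda\, r^{N+2m-1}Q < 0$ and using $Q'(0) = 0$ shows that $r^{N+2m-1}Q' \le 0$, hence $Q' \le 0$ on $(0,1)$.

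Next I would set $H_1(x) = x_1 x_2 \cdots x_m Q(|x|)$ on $\overline{B_1}$. By \eqref{ev-c} and the ODE, $-\Delta H_1 = \Lambda H_1$ on $B_1$. The boundary $\partial B_1$ splits into the flat faces contained in the hyperplanes $\{x_i = 0\}$, $1 \le i \le m$, where the prefactor $x_1 \cdots x_m$ vanishes, and the spherical cap $\{|x| = 1\}$, where $Q(1) = 0$; thus $H_1 \equiv 0$ on $\partial B_1$. Since $x_i > 0$ for $x \in B_1$ and $Q > 0$ on $[0,1)$, we moreover have $H_1 > 0$ throughout $B_1$. Hence $H_1$ is a strictly positive Dirichlet eigenfunction on $B_1$ with eigenvalue $\Lambda$ of exactly the required form.

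The only genuinely delicate step, which I expect to be the main obstacle, is to upgrade ``$\Lambda$ is \emph{an} eigenvalue of $B_1$'' to ``$\Lambda$ is the \emph{lowest} eigenvalue of $B_1$.'' Here I would use that eigenfunctions belonging to distinct Dirichlet eigenvalues are $L^2(B_1)$-orthogonal, together with the fact that the principal eigenfunction $\phi_1$ of $B_1$ may be chosen strictly positive. Since $H_1 > 0$ as well, one has $\int_{B_1} H_1 \phi_1 > 0$, so $H_1$ and $\phi_1$ cannot be orthogonal, forcing them to share the same eigenvalue and hence $\Lambda = \lambda_1(B_1)$. To make this rigorous one checks that $H_1 \in H^1_0(B_1)$ is a bona fide weak eigenfunction, which follows from the smoothness of $Q$ on $[0,1)$, the polynomial prefactor, and the vanishing $Q(1) = 0$; the edges where the sphere meets the hyperplanes $\{x_i = 0\}$ then cause no difficulty. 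This argument simultaneously yields the ``moreover'' assertion, since by construction $\Lambda$ and $Q(r')$ are the lowest Dirichlet eigenvalue and its radial eigenfunction on the unit ball of $\R^{N+2m}$.
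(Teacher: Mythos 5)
Your proposal is correct and follows essentially the same route as the paper: take $\Lambda$ and $Q$ to be the lowest Dirichlet eigenvalue and radial ground state of the unit ball in $\R^{N+2m}$, transplant via the ansatz \eqref{ev-b} using \eqref{ev-c}, and conclude from positivity of the resulting eigenfunction that $\Lambda$ is the lowest eigenvalue on $B_1$. The only difference is that you spell out details the paper leaves implicit (the monotonicity of $Q$ via the ODE, the orthogonality argument showing a positive eigenfunction must be the ground state, and the membership $H_1 \in H^1_0(B_1)$), which is fine.
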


\begin{proof}
Let $Q(r')$, where $r' = (x_1^2 + x_2^2 + \cdots + x_{N + 2m}^2)^{1/2}$, denote the radially symmetric, radially decreasing, nonnegative eigenfunction
of $-\Delta$ on the unit ball in $\R^{N + 2m}$, [normalized so that $Q(0) = 1$], with eigenvalue $\Lambda > 0$.
In particular, the function $Q: [0,1] \to [0, \infty)$ satisfies the differential equation
\begin{equation}
\label{ev-d}
-\left[Q''(s) + \frac{N + 2m - 1}{s}Q'(s)\right] = \Lambda Q(s), \quad 0 < s \le 1.
\end{equation}
Let $H : \overline{B_1} \to \R^+$ be given by \eqref{ev-b}, where
$r = (x_1^2 + x_2^2 + \cdots + x_N^2)^{1/2}$.
It follows from
\eqref{ev-c} and \eqref{ev-d} that $-\Delta H = \Lambda H$ on $B_1$ and that
$H(x) = 0$ for all $x \in \partial B_1$.  Since $H(x) > 0$
for all $x \in B_1$, it follows that $\Lambda$ is the lowest eigenvalue
of $-\Delta$ on $B_1$.
\end{proof}

Let us now give a remark about the elliptic equation verified by $\psi_0$.
\begin{remark}
\label{eqpsi0}
Let $N\geq 1,\; m\in \{0,1,\cdots, N\},\; 0<\gamma<N$ and $\psi_0$ be given by \eqref{psi0}. Then
\begin{equation}
\label{lestimates}
-\Delta \psi_0=(\gamma+2m)(N-2-\gamma){\psi_0\over |x|^2},
\end{equation}
for all $x\in \Omega_m.$
\end{remark}
\begin{proof}
By \eqref{psi0} the function $\psi_0$ can be written  in the form \eqref{ev-b}, that is
 $$\psi_0(x)=x_1\cdots x_mQ(r),$$ with $Q(r)=c_{m,\gamma}r^{-\gamma-2m},\; r=(x_1^2+\cdots +x_m^2+\cdots +x_N^2)^{1/2},$ where $c_{m,\gamma}=\gamma(\gamma+2)\cdots(\gamma+2m-2).$  For such a $Q$ we have
\begin{eqnarray*}
Q''(r) + \frac{N + 2m - 1}{r}Q'(r)&=&c_{m,\gamma}(\gamma+2m)[\gamma+2m+1-(N+2m-1)]r^{-\gamma-2m-2}\\&=&c_{m,\gamma}(\gamma+2m)(\gamma+2-N)r^{-\gamma-2m-2}.
 \end{eqnarray*}
 The result follows then by \eqref{ev-c}.
\end{proof}

\end{document}